\newtheorem{theo}{Theorem}
\newtheorem{lemm}[theo]{Lemma}
\newtheorem{prop}[theo]{Proposition}
\newdefinition{defi}{Definition}
\newdefinition{rema}{Remark}
\def\ps@pprintTitle{
 \let\@oddhead\@empty
 \let\@evenhead\@empty
 \def\@oddfoot{}%
 \let\@evenfoot\let\@oddfoot }
\newcommand{\mL}{\mathcal{L}}
\newcommand{\K}{\mathcal{K}}
\newcommand{\R}{\mathbb{R}}
\newcommand{\Rn}{\mathbb{R}^n}
\newcommand{\lag}{\left\langle}
\newcommand{\rag}{\right\rangle}
\begin{document}
\title{\large{\bf Construction of a stable blow-up solution for a class of strongly perturbed semilinear heat equations}}
\author{V. T. Nguyen and H. Zaag \footnote{This author is supported by the ERC Advanced Grant no. 291214, BLOWDISOL and by the ANR project ANA\'E ref. ANR-13-BS01-0010-03.} \\ \textit{\small{Universit{\'e} Paris 13, Sorbonne Paris Cit{\'e},}}\\
\textit{\small{LAGA, CNRS (UMR 7539), F-93430, Villetaneuse, France.}} }
\begin{abstract}
We construct a solution for a class of strongly perturbed semilinear heat equations which blows up in finite time with a prescribed blow-up profile. The construction relies on the reduction of the problem to a finite dimensional one and the use of index theory to conclude.\\
 
\noindent \textit{Keywords}: Blow-up profile, finite-time blow-up, stability, semilinear heat equations. 
\end{abstract}
\maketitle
\section{Introduction}
We are interested in the following nonlinear parabolic equation: 
\begin{equation}\label{equ:problem}
\left\{
\begin{array}{rcl}
u_t &=& \Delta u + |u|^{p-1}u + h(u), \\
u(0) &=& u_0 \in L^\infty(\mathbb{R}^n),
\end{array}
\right.
\end{equation}
where $u$ is defined for $(x,t) \in \mathbb{R}^n \times [0,T)$, $1 < p$ and $p < \frac{n+2}{n-2}$ if $n \geq 3$, the function $h$ is in $\mathcal{C}^1(\mathbb{R}, \mathbb{R})$ satisfying
\begin{equation}\label{equ:h}
j = 0,1,\;\;|h^{(j)}(z)| \leq M\left( \dfrac{|z|^{p - j}}{\log^{a}(2 + z^2)} + 1\right) \;\;\;\text{with}\;\; a >1,\; M > 0,
\end{equation}
or 
\begin{equation}\label{equ:h1}
h(z) = \mu\dfrac{|z|^{p-1}z}{\log^{a}(2 + z^2)}\;\;\; \text{with} \;\; a > 0, \; \mu \in \mathbb{R}.
\end{equation}

\noindent By standard results, the Cauchy problem for equation \eqref{equ:problem} can be solved in $L^\infty(\mathbb{R}^n)$. The solution $u(t)$ of \eqref{equ:problem} would exist either on $[0,+\infty)$ (global existence) or only on $[0,T)$, with $0 < T < +\infty$. In this case, we say that $u(t)$ blows up in finite time $T$, namely
$$\lim_{t \to T} \|u(t)\|_{L^\infty(\mathbb{R}^n)} = +\infty.$$
Here $T$ is called the blow-up time, and a point $x_0 \in \mathbb{R}^n$ is called a blow-up point if and only if there exist $(x_n, t_n) \to (x_0, T)$ such that $|u(x_n,t_n)| \to +\infty$ as $n \to +\infty$. In this paper, we are interested in the finite time blow-up for equation \eqref{equ:problem}.\\

When $h \equiv 0$, the blow-up result for equation \eqref{equ:problem} is largely well-understood. The existence of blow-up solutions has been proved by several authors (see Fujita \cite{FUJsut66}, Ball \cite{BALjmo77}, Levine \cite{LEVarma73}). We have a lots of results concerning the asymptotic blow-up behavior, locally near a given blow-up point (see Giga and Kohn \cite{GKcpam89}, Weissler \cite{WEIjde84}, Filippas, Kohn and Liu \cite{FKcpam92}, \cite{FLaihn93}, Herrero and Vel\'azquez \cite{HVcpde92}, \cite{HVasnsp92}, \cite{VELcpde92}, \cite{VELtams93}, Merle and Zaag \cite{MZcpam98}, \cite{MZgfa98}, \cite{MZma00}). The notion of asymptotic profile appears also in various papers (see Bricmont and Kupiainen \cite{BKnon94}, Merle and Zaag \cite{MZdm97}, Berger and Kohn \cite{BKcpam88}, Nguyen \cite{NG14num} for numerical studies).\\

Given $b$ a blow-up point of $u$, we study the behavior of $u$ near the singularity $(b,T)$ through the following \emph{similarity variables} introduced by Giga and Kohn \cite{GKcpam85, GKiumj87, GKcpam89}:
\begin{equation}\label{equ:simivariables}
y = \frac{x - b}{\sqrt{T-t}}, \quad s = -\log(T-t), \quad w_b(y,s) = (T - t)^\frac{1}{p-1}u(x,t),
\end{equation}
and $w_b$ satisfies for all $(y,s) \in \mathbb{R}^n \times [-\log T, +\infty)$,
\begin{equation}\label{equ:divw1}
\partial_sw_b = (\Delta  - \frac{y}{2}\cdot \nabla  + 1)w_b - \frac{p}{p-1} w_b + |w_b|^{p-1}w_b + e^{-\frac{ps}{p-1}}h \left(e^\frac{s}{p-1}w_b\right).
\end{equation}

\noindent In \cite{NG14the}, the author showed that if $w_b$ does not approach $\phi$ exponentially fast, where $\phi$ is the positive solution  of the associated ordinary differential equation of equation \eqref{equ:divw1},
\begin{equation}\label{equ:phiODE}
\phi_s = -\frac{\phi}{p-1} + \phi^p + e^{-\frac{ps}{p-1}}h(e^\frac{s}{p-1}\phi) \quad \text{such that} \quad \phi(s) \to \kappa \quad \text{as} \quad s \to + \infty,
\end{equation}
then the solution $u$ of \eqref{equ:problem} would approach an explicit universal profile as follows:
\begin{equation}\label{equ:behU1}
(T-t)^{\frac{1}{p-1}}u(b + z\sqrt{(T-t)|\log(T-t)|}, t) \to f(z) \quad \text{as}\quad t \to T,
\end{equation}
in $L_{loc}^\infty$ and in the case $a > 1$, where
\begin{equation}\label{def:f}
f(z) = \kappa\left(1 + c_p|z|^2\right)^{-\frac{1}{p-1}}, \quad \text{with}\;\; c_p=\frac{p-1}{4p}.
\end{equation}
The goal of this work is to show that the behavior \eqref{equ:behU1} does occur. More precisely, we construct a blow-up solution of equation \eqref{equ:problem} satisfying the behavior described in \eqref{equ:behU1}. This is our main result:
\begin{theo}[\textbf{Existence of a blow-up solution for equation \eqref{equ:problem} with the description of its profile}] \label{theo:1} There exists $T > 0$ such that equation \eqref{equ:problem} has a solution $u(x,t)$ in $\mathbb{R}^n \times [0,T)$ satisfying:\\
$i)$ the solution $u$ blows up in finite-time $T$ at the point $b = 0$,\\
$ii)$
\begin{equation}\label{est:th1}
\left\|(T-t)^\frac{1}{p-1}u(\cdot\sqrt{T-t},t) - f \left(\frac{\cdot}{\sqrt{|\log(T-t)|}} \right) \right\|_{W^{1,\infty}(\Rn)} \leq \frac{C}{|\log (T - t)|^\varrho},
\end{equation}
for all $\varrho \in (0,\nu)$ with $\nu = \min\{a - 1, \frac 12\}$ in the case \eqref{equ:h} and $\nu = \min\{a, \frac 12\}$ in the case \eqref{equ:h1}, $C$ is some positive constant and $f$ is defined in \eqref{def:f}.\\
$iii)$ There exists $u_* \in \mathcal{C}(\Rn \setminus \{0\}, \R)$ such that $u(x,t) \to u_*(x)$ as $t \to T$ uniformly on compact subsets of $\Rn \setminus \{0\}$, where 
$$u_*(x) \sim \left(\frac{8p |\log |x||}{(p-1)^2|x|^2} \right)^{\frac{1}{p-1}}\quad \text{as}\quad x \to 0.$$
\end{theo}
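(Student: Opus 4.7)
The plan is to follow the Bricmont-Kupiainen / Merle-Zaag strategy, adapting it carefully to absorb the perturbation $h$. Working in the similarity variables \eqref{equ:simivariables} centered at $b=0$, let $\varphi(y,s)$ be an explicit approximate profile constructed from $f(y/\sqrt{s})$ together with a correction of size $\kappa/(2ps)$ (and, in the case \eqref{equ:h1}, an extra lower-order term compensating the leading effect of $h$). Write $q = w_0 - \varphi$, so that \eqref{equ:divw1} becomes
\begin{equation*}
\partial_s q = (\mathcal{L} + V(y,s))\, q + B(q) + R(y,s) + N(q,s),
\end{equation*}
where $\mathcal{L} = \Delta - \tfrac{y}{2}\cdot\nabla + 1$, $V$ is the potential from linearization, $B(q) = O(q^2)$, $R$ is the source coming from $\varphi$ not being an exact solution, and $N(q,s)$ gathers all contributions of the perturbation $h$. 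First I would establish, using \eqref{equ:h} or \eqref{equ:h1}, the pointwise bound $|N(q,s)| \leq C s^{-\nu}(1+|q|)$ in the weighted spaces below, so that the perturbation is genuinely subcritical.

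Next I would run the spectral decomposition of $q$ with respect to $\mathcal{L}$ in the Gaussian-weighted $L^2$, whose spectrum is $\{1-m/2:\ m\in\mathbb{N}\}$. Cutting off with $\chi(y,s)=\chi_0(|y|/(K\sqrt{s}))$ I would split $q = q_b + q_e$ (inner/outer parts) and further $q_b = \sum_{m\le 2} q_m + q_-$, with $q_m$ the projections on the Hermite modes. This motivates defining a shrinking set $V_A(s)$ in which $|q_0|,|q_{1,i}| \le A\,s^{-2}$, $|q_{2,ij}| \le A \log s\, s^{-2}$, $\|q_-(y)(1+|y|)^{-3}\|_\infty \le A\,s^{-2}$, $\|q_e\|_\infty \le A\,s^{-1/2}$, plus analogous bounds in $W^{1,\infty}$. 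The point is that each bound is chosen so that, inside $V_A$, the nonlinear term $B(q)$, the source $R$ and the new perturbation $N$ are strictly dominated by the linear decay dictated by $\mathcal{L}+V$ on that mode.

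Then I would parametrize the initial data at time $s_0=-\log T$ by a vector $(d_0,d_1)\in\mathbb{R}^{1+n}$ aligned with the two unstable Hermite modes, of the form $q(y,s_0)=\chi_1(y,s_0)\bigl(d_0 + d_1\!\cdot\!y\bigr)$, and show (reduction to finite dimensions) two facts: (a) for all $A$ large and $s_0$ large, if $q(\cdot,s)\in V_A(s)$ on $[s_0,s_1]$ and touches $\partial V_A(s_1)$, then the exit is through $(q_0,q_1)$ which hit $\pm A s_1^{-2}$, with a transverse outgoing crossing; (b) the map $(d_0,d_1)\mapsto s_1^2(q_0(s_1),q_1(s_1))/A$ is continuous and degree-one from its domain to $\partial\bigl([-1,1]^{1+n}\bigr)$. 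A standard Brouwer-index/no-retraction argument then forces the existence of $(d_0,d_1)$ such that $q(s)\in V_A(s)$ for all $s\ge s_0$, which already yields (ii) in Theorem~\ref{theo:1} once one converts the bound back to the original variables.

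The final parts (i) and (iii) follow from relatively standard arguments: (i) by showing $\|q\|_{L^\infty}$ stays bounded in $V_A$ and hence $u(t)$ blows up exactly at time $T$ with $0$ as a blow-up point (using that the profile is nondegenerate at the origin); (iii) by the local energy / Giga-Kohn parabolic regularity argument away from $0$, combined with an ODE comparison inside a neighborhood of $(0,T)$ to get the final profile $u_*(x)\sim ((8p|\log|x||)/((p-1)^2|x|^2))^{1/(p-1)}$, the perturbation $h$ contributing only lower-order terms thanks to the logarithmic smallness \eqref{equ:h}-\eqref{equ:h1}. The main obstacle I anticipate is Step~1: showing that $N(q,s)$ truly behaves like a lower-order term of size $s^{-\nu}$ uniformly on $V_A(s)$, both in the Gaussian-weighted norm (for the projections on Hermite modes) and in the outer $L^\infty$ norm; since $h$ is only $\mathcal{C}^1$ with polynomial growth, the derivative estimate in \eqref{equ:h} must be exploited carefully to handle $B(q)$-type cross terms without losing the smallness gained from the logarithm, and the threshold $\nu=\min\{a-1,1/2\}$ (resp.\ $\min\{a,1/2\}$) in \eqref{est:th1} must be matched exactly by the bookkeeping in the shrinking set.
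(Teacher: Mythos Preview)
Your outline follows the Bricmont--Kupiainen/Merle--Zaag scheme correctly, and the reduction-to-finite-dimensions plus topological argument is indeed the backbone of the paper's proof. However, there is a genuine gap in the quantitative choices you make, and it is precisely where the novelty of this paper lies.

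Your shrinking set $V_A(s)$ uses the \emph{standard} Merle--Zaag exponents: $|q_0|,|q_1|\le A s^{-2}$, $|q_2|\le A\log s\, s^{-2}$, $\|q_-(1+|y|)^{-3}\|_\infty \le A s^{-2}$, $\|q_e\|_\infty \le A s^{-1/2}$. These will \emph{not close} for the full range of $a$ in the statement. The perturbation $h$ injects, through the source term $R$, contributions of order $s^{-a}$ (case \eqref{equ:h}) or $s^{-(a+1)}$ (case \eqref{equ:h1}) into the equation for $q_2$; after the $2/s$ contraction this yields $q_2$ of order $s^{-(1+\nu)}$ with $\nu=\min\{a-1,\tfrac12\}$ or $\min\{a,\tfrac12\}$, which for small $\nu$ is far larger than $\log s/s^2$. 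The same mismatch propagates to $q_-$ and $q_e$. The paper handles this by two coupled modifications: (1) the profile is taken as $\varphi=\frac{\phi(s)}{\kappa}\bigl(f(y/\sqrt s)+\frac{\kappa}{2ps}\bigr)$ with $\phi$ the exact ODE solution \eqref{equ:phiODE}, which pushes the residual in $R$ from order $s^{-a}$ down to $s^{-(a+1)}$ in \emph{both} cases; (2) the shrinking set is redefined with $\nu$- and $\varrho$-dependent exponents, namely $|q_m|\le A s^{-(1+\nu)}$, $|q_2|\le A^2 s^{-(1+\nu)}$, $|q_-|\le A s^{-(3/2+\varrho)}(1+|y|^3)$, $\|q_e\|_\infty\le A^2 s^{-\varrho}$. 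Your vague ``extra lower-order term'' for case \eqref{equ:h1} only is not enough; the $\phi/\kappa$ multiplicative correction is needed in both cases, and the weakened shrinking-set exponents are essential to make the bootstrap close. This is also why the final estimate \eqref{est:th1} carries the exponent $\varrho<\nu$ rather than the $1/2$ one would get from the unperturbed equation.

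A secondary point: your bound $|N(q,s)|\le Cs^{-\nu}(1+|q|)$ is not what is actually proved or needed. In case \eqref{equ:h} one has $|N|\le Cs^{-a}|q|$ (linear in $q$), in case \eqref{equ:h1} one has $|N|\le Cs^{-a}|q|^2$ (quadratic, because the linear part is absorbed into $V$); the paper treats these separately, and the distinction matters for the $q_2$ projection. Finally, for part (iii) the paper does not use Giga--Kohn energy/regularity but rather the Merle rescaling trick: introduce $v(x_0,\xi,\tau)=(T-t(x_0))^{1/(p-1)}u(x_0+\xi\sqrt{T-t(x_0)},\,t(x_0)+(T-t(x_0))\tau)$ with $|x_0|=K_0\sqrt{(T-t(x_0))|\log(T-t(x_0))|}$, and use continuity with respect to initial data to compare with the explicit ODE solution $\hat f_{K_0}(\tau)$.
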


\begin{rema} Note that $i)$ directly follows from $ii)$. Indeed, $ii)$ implies that $u(0,t) \sim \kappa (T-t)^{-\frac{1}{p-1}} \to +\infty$ as $t \to T$, which means that $u$ blows up in finite-time $T$ at the point $0$. From $iii)$, we see that $u$ blows up only at the point $b = 0$.
\end{rema}
\begin{rema} Note that the profile $f$ is the same as in the nonlinear heat equation without the perturbation ($h \equiv 0$), see Bricmont and Kupiainen \cite{BKnon94}, Merle and Zaag \cite{MZdm97}.\\
The estimate \eqref{est:th1} holds in $W^{1,\infty}$ and uniformly in $z \in \mathbb{R}^n$. In the previous work, Ebde and Zaag \cite{EZsema11} gives such a uniform convergence in the case $h$ involving a nonlinear gradient term. In fact, the convergence in $W^{1,\infty}$ comes from a parabolic regularity estimate for equation \eqref{equ:divw1} (see Proposition \ref{prop:regW1in} below). Dealing with the case $h \equiv 0$, Bricmont and Kupiainen \cite{BKnon94}, Merle and Zaag \cite{MZdm97} also give such a uniform convergence but only in $L^\infty(\mathbb{R}^n)$. In most papers, the same kind convergence is proved, but only uniformly on a smaller subsets, $|z| \leq K\sqrt{|\log(T-t)|}$ (see Vel\'azquez \cite{VELcpde92}).
\end{rema}

The proof of Theorem \ref{theo:1} bases on techniques developed by Bricmont and Kupiainen in \cite{BKnon94} and Merle and Zaag in \cite{MZdm97} for the semilinear heat equation
\begin{equation}\label{equ:semiheat}
u_t = \Delta u + |u|^{p-1}u.
\end{equation}
Note that the perturbation term $h$ certainly impacts on the construction of solutions of \eqref{equ:problem} satisfying \eqref{est:th1}. This causes some crucial modifications in \cite{MZdm97} in order to totally control the term $h$. Although these modifications do not affect the general framework developed in \cite{MZdm97}, they lay in 3 crucial places:\\
- We modify the profile around which we study equation \eqref{equ:divw1}, so that we go beyond the order $\frac{1}{s^a}$ generated by the perturbation term. Indeed, for small $a > 0$ and with the same profile as in \cite{MZdm97}, the order $\frac{1}{s^a}$ will become too strong and will not allow us to close our estimates. See Section 2 below, particularly definition \eqref{def:varphi}, which enables us to reach the order $\frac{1}{s^{a + 1}}$.\\
- In order to handle the order $\frac{1}{s^{a+1}}$, we need to modify the definition of the shrinking set near the profile. See Section 3 and particularly Proposition \ref{prop:1} below.\\
- A sharp understanding of the dynamics of the linearized operator of \eqref{equ:divw1} around the profile \eqref{def:varphi}, and which allows to handle the new definition of the shrinking set. See Lemma \eqref{lemm:BKespri} below.\\

\noindent For that reason, we will stress only the main parts of the proof of Theorem \ref{theo:1} and put forward the novelties of our argument. In particular, the proof relies on the understanding of the dynamics of the self-similar version of equation 
\eqref{equ:divw1} around the profile \eqref{def:f}. Following the work by Merle and Zaag \cite{MZdm97}, the proof will be divided into 2 steps:\\
- In the first step, we reduce the problem to a finite-dimensional problem: we will show that it is enough to control a finite-dimensional variable in order to control the solution near the profile. \\
- In the second step, we proceed by contradiction to solve the finite-dimensional problem and conclude using index theory.\\

\noindent We would like to mention that Masmoudi and Zaag \cite{MZjfa08} adapted the method of \cite{MZdm97} for the following Ginzburg-Landau equation:
\begin{equation}\label{equ:comGin}
u_t = (1 + \imath \beta)\Delta u + (1 + \imath \delta)|u|^{p-1}u,
\end{equation}
where $p - \delta^2 - \beta\delta (p+1) > 0$ and $u: \mathbb{R}^n\times[0,T) \to \mathbb{C}$. 
Note that the case $\beta = 0$ and $\delta \in \mathbb{R}$ small has been  studied earlier by Zaag \cite{ZAAihn98}.\\
In \cite{NZpre14}, Nouaili and Zaag successfully used the method of \cite{MZdm97} for the following complex valued semilinear heat equation:
$$u_t = \Delta u + u^2,$$
where $u(t): x \in \Rn \to \mathbb{C}$.\\

\noindent As in \cite{MZdm97}, \cite{ZAAihn98} and \cite{MZjfa08}, it is possible to make the interpretation of the finite-dimensional variable in terms of the blow-up time and the blow-up point. This allows us to derive the stability of the profile $f$ in Theorem \ref{theo:1} with respect to perturbations in the initial data. More precisely, we have the following:
\begin{theo}[\textbf{Stability of the solution constructed in Theorem \ref{theo:1}}] \label{theo:2}
Let us denote by $\hat{u}(x,t)$ the solution constructed in Theorem \ref{theo:1} and by $\hat{T}$ its blow-up time. Then, there exists a neighborhood $\mathcal{V}_0$ of $\hat{u}(x,0)$ in $W^{1,\infty}$ such that for any $u_0 \in \mathcal{V}_0$, equation \eqref{equ:problem} has a unique solution $u(x, t)$ with initial data $u_0$, and $u(x,t)$ blows up in finite time $T(u_0)$ at one single blow-up point $b(u_0)$. Moreover, estimate \eqref{equ:behU1} is satisfied by $u(x-b,t)$ and
$$T(u_0) \to \hat{T}, \quad b(u_0) \to 0 \quad \text{as $u_0 \to \hat{u}_0$ in $W^{1,\infty}(\mathbb{R}^n)$}.$$
\end{theo}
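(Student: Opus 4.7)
The plan is to adapt the finite-dimensional reduction and trapping argument of Theorem~\ref{theo:1} to a family of solutions indexed by the candidate blow-up parameters $(T,b)$, and to exploit the geometric interpretation of the $n+1$ unstable modes of the linearized operator around the profile $f$: the eigenvalue-$1$ mode corresponds to a shift of the blow-up time $T$, and the $n$-dimensional eigenvalue-$\tfrac12$ eigenspace to a shift of the blow-up point $b$. The stability then comes from the fact that the construction in Theorem~\ref{theo:1} actually produces, up to these $n+1$ parameters, an open set of admissible initial data in $W^{1,\infty}$.

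Concretely, I would first rewrite the proof of Theorem~\ref{theo:1} so that $\hat u_0 = \Psi_{\hat T,\,0,\,\hat d_0,\,\hat d_1}$, where $\Psi_{T,b,d_0,d_1}$ is the explicit family in $W^{1,\infty}$ used in the construction, with $(d_0,d_1)\in\R\times\Rn$ encoding the projection onto the unstable eigenspace at the initial self-similar time $s_0=-\log T$. For $u_0$ in a small $W^{1,\infty}$-neighborhood $\mathcal{V}_0$ of $\hat u_0$, I would consider the solution $u$ with initial data $u_0$, put it in similarity variables around $(b,T)$ according to \eqref{equ:simivariables}, and study the map
\[
(T,b)\;\longmapsto\;\bigl(q_0(T,b,u_0),\,q_1(T,b,u_0)\bigr)\in\R\times\Rn,
\]
where $(q_0,q_1)$ is the projection of $w_b(\cdot,s_0)-f$ onto the $(n+1)$-dimensional unstable subspace.

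The core observation is that, up to lower-order terms, shifting $T$ acts non-degenerately on $q_0$ and shifting $b$ acts non-degenerately on $q_1$; for $u_0=\hat u_0$ this non-degeneracy is built into the choice of $(\hat T,\hat d_0,\hat d_1)$, and it persists by continuity for $u_0\in\mathcal{V}_0$. A Brouwer degree (or implicit function theorem) argument then furnishes $(T(u_0),b(u_0))$ close to $(\hat T,0)$, together with matching parameters $(d_0(u_0),d_1(u_0))$ provided by the same topological shooting as in Theorem~\ref{theo:1}, such that the rescaled solution lies in the shrinking set at time $s_0$. The trapping given by Proposition~\ref{prop:1} then keeps it there for all $s\ge s_0$, forcing $u(\cdot-b(u_0),\cdot)$ to blow up at $(0,T(u_0))$ with the profile \eqref{def:f} and to satisfy \eqref{equ:behU1}; uniqueness of the blow-up point follows as in item $iii)$ of Theorem~\ref{theo:1}, and the continuity statements $T(u_0)\to\hat T$, $b(u_0)\to 0$ as $u_0\to\hat u_0$ come from continuity of the above map in $u_0$.

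The hard part is the uniform non-degeneracy of this parametrization in $u_0$: one has to quantify how the perturbation $u_0-\hat u_0$ and the nonlinear correction $h$ affect the leading-order cancellation between infinitesimal shifts in $(T,b)$ and the projections $(q_0,q_1)$, so that the degree argument survives for every $u_0\in\mathcal{V}_0$. This is a perturbative refinement of the estimates used to control $h$ in Proposition~\ref{prop:1}, and it follows the same pattern as the stability arguments in \cite{MZdm97} and \cite{MZjfa08}, where the identification of the finite-dimensional parameters with $(T,b)$ is carried out in detail; no fundamentally new ingredient beyond those of Theorem~\ref{theo:1} is needed.
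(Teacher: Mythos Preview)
Your outline is essentially the approach the paper adopts: it does not prove Theorem~\ref{theo:2} directly but states (in the remark following it) that the stability follows from the finite-dimensional reduction exactly as in \cite{MZdm97}, via the interpretation of the $n+1$ unstable directions in terms of $(T,b)$, and refers the reader there. Your sketch of that argument---similarity variables around a moving $(b,T)$, the map $(T,b)\mapsto(q_0,q_1)$, degree theory, then trapping---is the correct one.

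Two small points of confusion to clean up. First, in the stability proof the initial datum $u_0$ is \emph{fixed}; there are no parameters $(d_0,d_1)$ left to shoot on. The role previously played by $(d_0,d_1)$ is taken over entirely by $(T,b)$, so your phrase ``together with matching parameters $(d_0(u_0),d_1(u_0))$ provided by the same topological shooting'' should be dropped. Second, the trapping you invoke is Proposition~\ref{prop:redu} (the reduction to finite dimensions and transversality), not Proposition~\ref{prop:1}, which merely defines the shrinking set $V_A(s)$; and the goal of the degree argument is not that $q$ lies in $V_A$ at the single time $s_0$, but that it stays in $V_A(s)$ for all $s\ge s_0$, which by Proposition~\ref{prop:redu} reduces to controlling $(q_0,q_1)(s)$ in $\hat V_A(s)$.
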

\begin{rema}  We will not give the proof of Theorem \ref{theo:2} because the stability result follows from the reduction to a finite dimensional case as in \cite{MZdm97} with the same proof. Hence, we only prove the reduction and refer to \cite{MZdm97} for the stability. Note that from the parabolic regularity, our stability result holds in the larger space $L^\infty(\Rn)$.
\end{rema}

\section{Formulation of the problem}
As in \cite{MZdm97}, \cite{BKnon94}, we give the proof in one dimension ($n=1$). The proof remains the same for higher dimensions ($n \geq 2$). We would like to find $u_0$ initial data such that the solution $u$ of equation \eqref{equ:problem} blows up in finite time $T$ and satisfies the estimate \eqref{est:th1}. Using similarity variables \eqref{equ:simivariables}, this is equivalent to finding $s_0 > 0$ and $w_0(y)\equiv w(y,s_0)$ such that the solution $w$ of equation \eqref{equ:divw1} with initial data $w_0$ satisfies
\begin{equation*}
\lim_{s \to +\infty}\| w(y,s) - f(\frac{y}{\sqrt{s}})\|_{W^{1,\infty}} = 0, 
\end{equation*}
where $f$ is given in \eqref{def:f}.\\
In order to prove this, we will not linearize equation \eqref{equ:divw1} around $f + \frac{\kappa}{2p s}$ as in \cite{MZdm97}. We will instead introduce 
\begin{equation}\label{def:varphi}
q = w-\varphi, \quad \text{where} \quad \varphi = \frac{\phi(s)}{\kappa}\left(f(\frac{y}{\sqrt{s}}) + \frac{\kappa}{2p s}\right),
\end{equation}
with $\phi$ and $f$ are introduced in \eqref{equ:phiODE} and \eqref{def:f}. Then, the problem is reduced to constructing a function $q$ such that 
\begin{equation*}
\lim_{s \to + \infty}\|q(y,s)\|_{W^{1,\infty}} = 0
\end{equation*}
and $q$ is a solution of the following equation for all $(y,s) \in \mathbb{R} \times [s_0, +\infty)$,
\begin{equation}\label{equ:q}
q_s = (\mathcal{L} + V)q + B(q) + R(y,s) + N(y,s),
\end{equation}
where $\mathcal{L} = \Delta - \frac{y}{2}\cdot \nabla + 1$ and
\begin{align}
V(y,s) &= p \left(\varphi(y,s)^{p-1} - \frac{1}{p-1}\right) + \imath e^{-s}h'(e^\frac{s}{p-1}\varphi),\label{def:V}\\
B(q) &= |\varphi + q|^{p-1}(\varphi + q) - \varphi^p - p\varphi^{p-1}q,\label{def:B}\\
R(y,s) &= -\varphi_s + \Delta \varphi - \frac{y}{2}\cdot \nabla \varphi - \frac{\varphi}{p-1} + \varphi^p + e^{\frac{-ps}{p-1}}h\left(e^{\frac{s}{p-1}}\varphi\right),\label{def:R}\\
N(q,s) &= e^{\frac{-ps}{p-1}}\left[h\left(e^\frac{s}{p-1}(\varphi + q) \right) - h\left(e^\frac{s}{p-1}\varphi \right) - \imath e^\frac{s}{p-1}h'\left(e^\frac{s}{p-1}\varphi \right)q\right],\label{def:N}
\end{align}
with $\imath = 0$ in the case \eqref{equ:h} and $\imath  = 1$ in the case \eqref{equ:h1}.\\

\noindent One can remark that we don't linearize \eqref{equ:divw1} around $\tilde{\varphi} = f(\frac{y}{\sqrt{s}}) + \frac{\kappa}{2p s}$ as in the case of equation \eqref{equ:semiheat} treated in \cite{MZdm97}. In fact, if we do the same, we may obtain some terms like $\frac{1}{s^a}$ coming from the strong perturbation $h$ in equation \eqref{equ:divw1}, and we may not be able to control these terms in the case $a < 3$. To extend the range of $a$, we multiply the factor $\frac{\phi(s)}{\kappa}$ to $\tilde{\varphi}$ in order to go beyond the order $\frac{1}{s^a}$ and reach at the order $\frac{1}{s^{a + 1}}$. Linearizing around $\varphi$ given in \eqref{def:varphi} is a major novelty in our approach. \\

\noindent In following analysis, we will use the following integral form of equation \eqref{equ:q}: for each $s \geq \sigma \geq s_0$:
\begin{equation}\label{for:qint}
q(s) = \mathcal{K}(s,\sigma)q(\sigma) + \int_\sigma^s \mathcal{K}(s, \tau)\left[B(q(\tau)) + R(\tau) + N(q(\tau),\tau)\right] d\tau,
\end{equation}
where $\mathcal{K}$ is the fundamental solution of the linear operator $\mathcal{L} + V$ defined for each $\sigma > 0$ and for each $s \geq \sigma$,
\begin{equation} \label{def:kernel}
\partial_s\mathcal{K}(s,\sigma) = (\mathcal{L} + V)\mathcal{K}(s,\sigma), \quad \mathcal{K}(\sigma, \sigma) = Identity.
\end{equation}

\noindent Since the dynamics of equation \eqref{equ:q} are influenced by the linear part, we first need to recall some properties of the operator $\mathcal{L}$ from Bricmont and Kupiainen \cite{BKnon94}. The operator $\mathcal{L}$ is self-adjoint in $L^2_\rho(\mathbb{R}^n)$, where $L^2_\rho$ is the weighted $L^2$ space associated with the weight $\rho$ defined by 
$$\rho(y) = \left(\frac{1}{4\pi}\right)^{n/2}e^{-\frac{|y|^2}{4}}.$$
Its spectrum is given by 
$$spec(\mathcal{L}) = \{1 - \frac{m}{2},\; m \in \mathbb{N}\},$$
and its eigenfunctions are derived from Hermite polynomials.\\
If $n = 1$, the eigenfunction corresponding to $1 - \frac{m}{2}$ is
\begin{equation}\label{equ:eigenfu1}
h_m(y) = \sum_{k= 0}^{\left[\frac{m}{2}\right]} \frac{m!}{k!(m - 2k)!}(-1)^ky^{m - 2k}. 
\end{equation}
We also denote $k_m(y) =  \frac{h_m(y)}{\|h_m(y)\|_{L^2_\rho}^2}$.\\
If $n \geq 2$, we write the spectrum of $\mathcal{L}$ as $spec(\mathcal{L}) = \{ 1 - \frac{|m|}{2},\; |m| = m_1 + \dots + m_n, \;(m_1,\dots, m_n) \in \mathbb{N}^n\}$. Given $m = (m_1, \dots, m_n) \in \mathbb{N}^n$, the eigenfunction corresponding to $1 - \frac{|m|}{2}$ is 
\begin{equation}\label{equ:eigenfu}
H_m(y) = h_{m_1}(y_1)\dots h_{m_n}(y_n), \quad \text{where $h_{m}$ is defined in \eqref{equ:eigenfu1}.}
\end{equation}
The potential $V(y,s)$ has two fundamental properties:\\
$i)$ $V(\cdot, s) \to 0$ in $L^2_{\rho}$ as $s \to +\infty$. In particular, the effect of $V$ on the bounded sets or in the "blow-up" region ($|y| \leq K\sqrt{s}$) is regarded as a perturbation of the effect of $\mathcal{L}$.\\
$ii)$ outside of the "blow-up" region, we have the following property: for all $\epsilon > 0$, there exist $C_\epsilon > 0$ and $s_\epsilon$ such that 
\begin{equation}\label{equ:asymV}
\sup_{s \geq s_\epsilon, |y| \geq C_\epsilon \sqrt{s}} |V(y,s) - (-\frac{p}{p-1})| \leq \epsilon.
\end{equation}
This means that $\mathcal{L} + V$ behaves like $\mathcal{L} - \frac{p}{p-1}$ in the region $|y| \geq K\sqrt{s}$. Because $1$ is the biggest eigenvalue of $\mathcal{L}$, the operator $\mathcal{L} - \frac{p}{p-1}$ has purely negative spectrum. Therefore, the control of $q(y,s)$ in $L^\infty$ outside of the "blow-up" region will be done without difficulties. \\
Since the behavior of $V$ inside and outside of the "blow-up" region are different, let us decompose $q$ as following: Let $\chi_0 \in \mathcal{C}_0^\infty([0,+\infty))$ with $supp(\chi_0) \subset [0,2]$ and $\chi_0 \equiv 1$ on $[0,1]$. We define 
\begin{equation}\label{def:chi}
\chi(y,s)= \chi_0(\frac{|y|}{K\sqrt{s}}),
\end{equation}
where $K > 0$ to be fixed large enough, and write 
\begin{equation}\label{de:qbqe}
q(y,s) = q_b(y,s) + q_e(y,s),
\end{equation}
where $q_b(y,s) = \chi(y,s) q(y,s)$ and $q_e(y,s) = (1 - \chi(y,s))q(y,s)$. Note that $supp (q_b(s)) \subset \mathbf{B}(0,2K\sqrt{s})$ and $supp (q_e(s)) \subset \mathbb{R}\setminus\mathbf{B}(0,K\sqrt{s})$.\\
In order to control $q_b$, we expand it with respect to the spectrum of $\mL$ in $L^2_\rho$. More precisely, we write $q$ into 5 components as follows:
\begin{equation}\label{def:decomq}
q(y,s) = \sum_{m=0}^2q_m(s)h_m(y) + q_-(y,s) + q_e(y,s),
\end{equation}
where $q_m, q_-$ are coordinates of $q_b$ (not of $q$), namely that $q_m$ is the projection of $q_b$ in $h_m$ and $q_- = P_-(q_b)$ with $P_-$ being the projector on the negative subspace of $\mathcal{L}$.

\section{Proof of the existence of a blow-up solution with the given blow-up profile}
In this section, we use the framework developed in \cite{MZdm97} in order to prove Theorem \ref{theo:1}. We proceed in 4 steps:\\
- In the first step, we define a shrinking set $V_A(s)$ and translate our goal of making $q(s)$ go to $0$ in $L^\infty(\mathbb{R})$ in terms of belonging to $V_A(s)$. We also exhibit a two parameter initial data family for equation \eqref{equ:q} whose coordinates are very small (with respect to the requirements of $V_A(s)$), except the two first $q_0$ and $q_1$. Note that the set $V_A(s)$ is different from the corresponding one in \cite{MZdm97}, and this makes the second major novelty of our work, in addition to the modification of the profile in \eqref{def:varphi}. \\
- In the second step, using the spectral properties of equation \eqref{equ:q}, we reduce our
goal from the control of $q(s)$ (an infinite dimensional variable) in $V_A(s)$ to the control of its two first components $(q_0(s), q_1(s))$ (a two-dimensional variable) in $\left[ -\frac{A}{s^{1 + \nu}}, \frac{A}{s^{1 + \nu}}\right]^2$ with $\nu > 0$.\\
- In the third step, we solve the local in time Cauchy problem for equation \eqref{equ:q}.\\
- In the last step, we solve the finite dimensional problem using index theory
and conclude the proof of Theorem \ref{theo:1}.\\

In what follows, the constant $C$ denotes a universal one independent of variables, only depending upon constants of the problems such as $a$, $p$, $M$, $\mu$ and $K$ in \eqref{def:chi}.

\subsection{Definition of a shrinking set $V_A(s)$ and preparation of initial data}
Let first introduce the following proposition:
\begin{prop}[\textbf{A shrinking set to zero}] \label{prop:1} Let $\nu = \min\{a - 1, \frac{1}{2}\}$ in the case \eqref{equ:h} and $\nu = \min\{a, \frac{1}{2}\}$ in the case \eqref{equ:h1}, we fix $\varrho \in (0,\nu)$. For each $A > 0$, for each $s > 0$, we define $\hat{V}_A(s) = \left[-\frac{A}{s^{1 + \nu}}, \frac{A}{s^{1 + \nu}} \right]^2 \subset \mathbb{R}^2$, and $V_A(s)$ as being the set of all functions $g$ in $L^\infty$ such that:
$$m = 0,1,\;\;|g_m(s)| \leq \frac{A}{s^{1 + \nu}}, \quad |g_2(s)| \leq \frac{A^2}{s^{1+\nu}},$$
$$\forall y \in \mathbb{R},\quad  |g_-(y,s)| \leq \frac{A}{s^{3/2+\varrho}}(1 + |y|^3), \quad \|g_e(s)\|_{L^\infty} \leq \frac{A^2}{s^\varrho},$$
where $g_m, g_-$ and $g_e$ are defined in \eqref{def:decomq}. Then we have for all $s \geq e$ and $g \in V_A(s)$, 
\begin{equation}\label{iq:VAbe}
\forall y\in\R,\;\; |g(y,s)| \leq \frac{CA^2}{s^{3/2 + \varrho}}(1 + |y|^3)  + \frac{CA^2}{s^{1 + \nu}}(1 + |y|^2) \quad \text{and}\quad  \|g(s)\|_{L^\infty} \leq \frac{CA^2}{s^\varrho}.
\end{equation}
\end{prop}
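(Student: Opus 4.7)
The plan is to exploit the decomposition \eqref{def:decomq} and bound each piece separately, using the defining bounds of $V_A(s)$ on the one hand, and the support properties of $\chi$ (i.e., of $g_b$ and $g_e$) on the other. First I would observe that the Hermite polynomials $h_0,h_1,h_2$ satisfy $|h_m(y)|\le C(1+|y|^2)$ for $m=0,1,2$; combining with $|g_0|,|g_1|\le A/s^{1+\nu}$ and $|g_2|\le A^2/s^{1+\nu}$ (and assuming without loss of generality $A\ge 1$), I get
$$\sum_{m=0}^{2} |g_m(s) h_m(y)| \le \frac{CA^2}{s^{1+\nu}}\,(1+|y|^2).$$
The $g_-$ piece is already controlled pointwise by hypothesis, so only the $g_e$ piece requires a small trick.

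For the first estimate in \eqref{iq:VAbe}, the key point is converting the $L^\infty$ bound on $g_e$ into a bound with the polynomial weight $1+|y|^3$. Since $g_e=(1-\chi)g$ vanishes on $|y|\le K\sqrt{s}$, I may restrict to $|y|\ge K\sqrt{s}$, where $|y|^3 \ge K^3 s^{3/2}$ and hence $1\le (1+|y|^3)/(K^3 s^{3/2})$. Therefore
$$|g_e(y,s)| \le \frac{A^2}{s^\varrho} \le \frac{A^2}{s^\varrho}\cdot \frac{1+|y|^3}{K^3 s^{3/2}} = \frac{CA^2}{s^{3/2+\varrho}}\,(1+|y|^3),$$
and this inequality is trivially true on $|y|\le K\sqrt{s}$ where $g_e\equiv 0$. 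Adding the three pointwise bounds (Hermite sum, $g_-$, $g_e$) yields the first claim.

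For the $L^\infty$ estimate, I would split $g = g_b + g_e$ and use the dual support property: on $\mathrm{supp}(g_b)\subset\{|y|\le 2K\sqrt{s}\}$ one has $1+|y|^2\le Cs$ and $1+|y|^3\le Cs^{3/2}$, so the Hermite-sum bound becomes $CA^2 K^2/s^\nu$ and the $g_-$ bound becomes $CAK^3/s^\varrho$. Since $\varrho<\nu$ gives $1/s^\nu\le 1/s^\varrho$ for $s\ge e$, both are absorbed into $CA^2/s^\varrho$, and adding $\|g_e\|_{L^\infty}\le A^2/s^\varrho$ finishes the proof.

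This statement is essentially a bookkeeping lemma that organizes the new shrinking set so the subsequent dynamical estimates can be closed, so I do not anticipate a real obstacle; the only mildly delicate step is the weight-trading argument above, which couples the inner and outer parts of the decomposition by trading an $L^\infty$ bound on $g_e$ against the $(1+|y|^3)$-weight away from the blow-up region. This is precisely the mechanism that makes the new scales $1/s^{1+\nu}$ and $1/s^{3/2+\varrho}$ compatible with each other, and it is what allows the shrinking set to differ from the one in \cite{MZdm97} in a consistent way.
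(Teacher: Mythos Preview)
Your proof is correct and follows essentially the same approach as the paper. The paper's proof consists of the single observation that $\left|\frac{1-\chi(y,s)}{1+|y|^3}\right|\le \frac{C}{s^{3/2}}$, which is exactly your weight-trading step for $g_e$; you have simply written out in full what the paper leaves to the reader.
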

\begin{proof} From the definition of $V_A(s)$ and the fact that $\left|\frac{1 - \chi(y,s)}{1 + |y|^3}\right| \leq \frac{C}{s^{3/2}}$, the conclusion of Proposition \ref{prop:1} simply follows.
\end{proof}
Initial data (at time $s_0 = -\log T$) for the equation \eqref{equ:q} will depend on two real parameters $d_0$ and $d_1$ as given in the following proposition:
\begin{lemm}[\textbf{Decomposition of initial data on the different components)}] \label{lemm:decomdata} For each $A > 1$, there exists $\delta_1(A) > 0$ such that for all $s_0 \geq \delta_1(A)$: If we consider the following function as initial data for equation \eqref{equ:q}:
\begin{equation}\label{def:intq0}
q_{d_0,d_1}(y,s_0) = \frac{\phi(s_0)}{\kappa}\left(f^p(z)(d_0 + d_1 z) - \frac{\kappa}{2p s_0}\right),
\end{equation}
where $z = \frac{y}{\sqrt{s_0}}$, $f$ and $\phi$ are defined in \eqref{def:f} and \eqref{equ:phiODE}, then\\
$i)$ There exists a constant $C = C(p) > 0$ such that the components of $q_{d_0,d_1}(s_0)$ (or $q(s_0)$ for short) satisfy:
\begin{align}
q_0(s_0) &= d_0a_0(s_0) + b_0(s_0), \quad \text{with} \quad a_0(s_0) \sim C, \quad |b_0(s_0)| \leq \frac{C}{s_0},\label{equ:q_0init}\\
q_1(s_0) &= d_1a_1(s_0) + b_1(s_0), \quad \text{with} \quad a_1(s_0) \sim \frac{C}{\sqrt{s_0}}, \quad |b_1(s_0)| \leq \frac{C}{s_0^2},\label{equ:q_1init}
\end{align}
and 
\begin{align*}
&|q_2(s_0)| \leq \frac{C |d_0|}{s_0} + Ce^{-s_0}, \quad |q_-(y,s_0)| \leq \left(\frac{C|d_0|}{s_0} + \frac{C|d_1|}{s_0\sqrt{s_0}} \right)(1 + |y|^3),\\ 
&\|q_e(s_0)\|_{L^\infty} \leq C|d_0| + \frac{C|d_1|}{\sqrt{s_0}}, \quad \|\nabla q(s_0)\|_{L^\infty} \leq \frac{C(|d_0| + |d_1|)}{\sqrt{s_0}}.
\end{align*}
$ii)$ For each $A > 0$, if $(d_0,d_1)$ is chosen so that $(q_0,q_1)(s_0) \in \hat{V}_A(s_0)$, then 
\begin{align*}
&|d_0| + |d_1| \leq \frac{C}{s_0},\\
&|q_2(s_0)| \leq \frac{C}{s_0^2}, \;\;\left\|\frac{q_-(y,s_0)}{1 + |y|^3} \right\|_{L^\infty} \leq \frac{C}{s_0^2}, \;\; \|q_e(s_0)\|_{L^\infty} \leq \frac{C}{s_0},\\
&q(s_0) \in V_A(s_0), \;\; \|\nabla q(s_0)\|_{L^\infty} \leq \frac{C}{s_0\sqrt{s_0}},
\end{align*}
where the statement $q(s_0) \in V_A(s_0)$ holds with "strict inequalities", except for $(q_0,q_1)(s_0)$, in the sense that 
$$m = 0,1,\;\;|q_m(s)| \leq \frac{A}{s^{1 + \nu}}, \quad |q_2(s)| < \frac{A^2}{s^{1+\nu}},$$
$$\forall y \in \mathbb{R},\quad  |q_-(y,s)| < \frac{A}{s^{3/2+\varrho}}(1 + |y|^3), \quad \|q_e(s)\|_{L^\infty} < \frac{A^2}{s^\varrho}.$$
$iii)$ There exists a rectangle $\mathcal{D}_{s_0} \subset \left[-\frac{C}{s_0},\frac{C}{s_0} \right]^2$ such that the mapping $(d_0,d_1) \mapsto (q_0, q_1)(s_0)$ is linear and one to one from $\mathcal{D}_{s_0}$ onto $\left[-\frac{A}{s_0^{1 + \nu}}, \frac{A}{s_0^{1 + \nu}} \right]^2$ and maps $\partial \mathcal{D}_{s_0}$ into $\partial \left[-\frac{A}{s_0^{1 + \nu}}, \frac{A}{s_0^{1 + \nu}} \right]^2$. Moreover, it is of degree one on the boundary and the following equivalence holds:
$$q(s_0) \in V_A(s_0) \;\; \text{if and only if}\;\; (d_0,d_1) \in \mathcal{D}_{s_0}.$$
\end{lemm}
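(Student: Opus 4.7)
The lemma is essentially a computation of the spectral decomposition of the explicit datum $q_{d_0,d_1}(\cdot,s_0)$, followed by an elementary topological argument. Setting $z=y/\sqrt{s_0}$, I would first split
\[
q_{d_0,d_1}(y,s_0) = \frac{\phi(s_0)}{\kappa}\bigl[d_0\,f^p(z)-\tfrac{\kappa}{2ps_0}\bigr] + \frac{\phi(s_0)}{\kappa}\,d_1\,z\,f^p(z),
\]
where the first bracket is even in $y$ and the second is odd. Since $\chi(y,s_0)$ and $\rho(y)$ are even and the Hermite functions $h_m$ have parity $(-1)^m$, projecting $q_b=\chi q$ on $h_m$ cleanly separates the contributions: the $d_0$-piece and the constant feed only into $q_0$, $q_2$ and the even part of $q_-$, while $d_1$ feeds only into $q_1$ and the odd part of $q_-$. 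The prefactor $\phi(s_0)/\kappa=1+o(1)$ as $s_0\to\infty$ (by \eqref{equ:phiODE}) contributes only a controlled multiplicative correction.

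For Part (i), I would Taylor-expand $f^p(z)=\kappa^p-c_1 z^2+O(z^4)$ at $z=0$, substitute $z=y/\sqrt{s_0}$, and compute the resulting explicit integrals. The even projection gives
\[
a_0(s_0)=\frac{\phi(s_0)}{\kappa}\int \chi(y,s_0)\,f^p(y/\sqrt{s_0})\,\rho(y)\,dy \longrightarrow \kappa^p,
\]
while its odd analogue yields $a_1(s_0)\sim \kappa^p/\sqrt{s_0}$; the Taylor remainder together with the exponentially small cutoff error on $\{|y|\ge K\sqrt{s_0}\}$ produces correction terms of size $s_0^{-1}$ and $s_0^{-3/2}$ respectively. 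The constant $b_0$ comes from projecting $-\phi(s_0)/(2ps_0)$ onto $h_0$ (size $s_0^{-1}$), while $b_1\equiv 0$ by parity, so $|b_1|\le Cs_0^{-2}$ is trivial. The bound on $q_2$ is obtained from the order-$y^2/s_0$ Taylor term of $f^p$ paired against $h_2=y^2-2$ (giving $C|d_0|/s_0$) plus an exponentially small piece from $(1-\chi)\rho$. For $q_-$, one subtracts the $h_0,h_1,h_2$ projections and bounds the remaining Taylor tail: each monomial $d_j z^k$ with $k\ge 3$ contributes pointwise at most $C|d_j|(1+|y|^k)/s_0^{k/2}$, which I would bundle into the stated $(C|d_0|/s_0+C|d_1|/s_0^{3/2})(1+|y|^3)$ estimate. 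For $q_e$, supported in $\{|y|\ge K\sqrt{s_0}\}$, using the decay $f(z)\le C\kappa(1+|z|)^{-2/(p-1)}$ yields $\|q_e\|_\infty\le C|d_0|+C|d_1|/\sqrt{s_0}$. The $W^{1,\infty}$ gradient bound follows by direct differentiation of the explicit formula.

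Part (ii) is then mechanical: inverting $q_0=d_0 a_0+b_0$ and $q_1=d_1 a_1+b_1$ using the asymptotics of (i) gives $|d_0|+|d_1|\le C/s_0$ for $s_0$ large (depending on $A$); plugging back into the estimates of (i) produces $|q_2|\le C/s_0^2$, $|q_-|/(1+|y|^3)\le C/s_0^2$ and $\|q_e\|_\infty\le C/s_0$, all strictly smaller than the thresholds imposed by $V_A(s_0)$ for $s_0$ large (since $\nu,\varrho<1$), which gives the strict inequalities required. For Part (iii), by (i) the map $F:(d_0,d_1)\mapsto(q_0(s_0),q_1(s_0))$ is affine with diagonal linear part $\operatorname{diag}(a_0(s_0),a_1(s_0))$ of positive determinant; its preimage $\mathcal{D}_{s_0}:=F^{-1}(\hat V_A(s_0))$ is therefore a rectangle contained in $[-C/s_0,C/s_0]^2$, $F$ sends $\partial\mathcal{D}_{s_0}$ bijectively onto $\partial\hat V_A(s_0)$, and its restriction to the boundary has topological degree $+1$.

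The main technical obstacle will be the pointwise control of $q_-$ in Part (i): obtaining the clean $(1+|y|^3)$ factor uniformly for $|y|\le 2K\sqrt{s_0}$ requires careful bookkeeping between the Taylor expansion of $f^p$, the subtraction of the $h_0,h_1,h_2$ components, and the cutoff $\chi$. The other pieces are standard Hermite/Gaussian computations, and the topology in (iii) is immediate once the invertibility of the affine map in (i) is established.
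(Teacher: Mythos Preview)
Your proposal is correct and follows essentially the same approach as the paper. In fact the paper's own proof of part~$(i)$ simply refers to Lemma~3.5 of \cite{MZdm97} (writing out only the $\|\nabla q(s_0)\|_{L^\infty}$ bound explicitly, via $f'(z)=-\tfrac{p-1}{2p}zf^p(z)$), and then derives $(ii)$ and $(iii)$ exactly as you do---by inverting the diagonal affine map $(d_0,d_1)\mapsto(q_0,q_1)$ and substituting the resulting bound $|d_0|+|d_1|\le C/s_0$ back into~$(i)$; your parity/Taylor sketch is precisely the computation underlying that reference.
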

\begin{proof} $i)$ Since we have the similar expression of initial data \eqref{def:intq0} as in \cite{MZdm97}, we refer the reader to Lemma 3.5 of \cite{MZdm97}, except for the bound on $\|\nabla q(s_0)\|_{L^\infty}$. Note that although $i)$ is not stated explicitly in Lemma 3.5 of \cite{MZdm97}, they are clearly written in its proof. For $\|\nabla q(s_0)\|_{L^\infty}$, we use \eqref{def:intq0} and the fact that $f'(z) = -\frac{p-1}{2p}zf^p(z)$, $f^p(z), zf^{p-1}(z)$ and $z^2f^{p-1}(z)$ are in $L^\infty(\R)$ to derive
\begin{align*}
|\nabla q(y,s_0)| &\leq \left|\frac{\phi(s_0)}{\kappa}\right| \left|\frac{f^p(z)}{\sqrt{s_0}}\left( pd_0zf^{p-1}(z)  + d_1  +pd_1z^2f^{p-1}(z)\right)\right|\\
&\leq \frac{C}{\sqrt{s_0}}(|d_0| + |d_1|).
\end{align*}
$ii)$ We see from \eqref{equ:q_0init} and \eqref{equ:q_1init} that if $(d_0,d_1)$ is chosen so that $(q_0,q_1)(s_0) \in \left[-\frac{A}{s_0^{1 + \nu}}, \frac{A}{s_0^{1 + \nu}} \right]^2$, then $|d_0|$ and $|d_1|$ are bounded by $\frac{C}{s_0}$. Substituting these bounds into the estimates stated in $i)$, we immediately derive $ii)$.\\
$iii)$ It follows from \eqref{equ:q_0init} and \eqref{equ:q_1init}, part $ii)$ and the definition of $V_A$ given in Proposition \ref{prop:1}. This ends the proof of Lemma \ref{lemm:decomdata}.
\end{proof}
As stated in Theorem \ref{theo:1}, the convergence holds in $W^{1,\infty}(\R)$, we need the following parabolic regularity estimate for equation \eqref{equ:q}, with $q(s_0)$ given by \eqref{for:qint} and $q(s) \in V_A(s)$. More precisely, we have the following:
\begin{prop}\label{prop:regW1in}  For each $A \geq 1$, there exists $\delta_2(A) > 0$ such that for all $s_0 \geq \delta_2(A)$: if $q(s)$ is a solution of equation \eqref{equ:q} on $[s_0,s_1]$ with initial data at $s = s_0$, $q_{d_0,d_1}(s_0)$ given in \eqref{for:qint} where $(d_0,d_1) \in \mathcal{D}_{s_0}$, assume in addition that $q(s) \in V_A(s)$ for $s \in [s_0,s_1]$, then 
$$\|\nabla q(s)\|_{L^\infty} \leq \frac{CA^2}{s^\varrho}, \quad \forall s \in [s_0,s_1],$$
for some positive constant $C$.
\end{prop}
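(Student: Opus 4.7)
The plan is to exploit the parabolic smoothing of the semigroup $\mathcal{K}$ generated by $\mathcal{L}+V$ in the integral representation \eqref{for:qint}. More precisely, I intend to show that for any $0<\tau\leq 1$ one has a gradient-smoothing bound of the form
\begin{equation*}
\|\nabla \mathcal{K}(s,s-\tau)g\|_{L^\infty(\R)} \leq \frac{C}{\sqrt{\tau}}\,\|g\|_{L^\infty(\R)},
\end{equation*}
where $C$ is uniform for $\tau\in(0,1]$ and $s\geq s_0$ large. This will be obtained by writing $\mathcal{L}+V=\mathcal{L}-\frac{p}{p-1}+W(y,s)$ with $W=V+\frac{p}{p-1}$ bounded (uniformly in $s$ by the explicit form \eqref{def:V} of $V$, together with the asymptotic \eqref{equ:asymV}), using the explicit Mehler kernel of $e^{\tau\mathcal{L}}$ (whose $y$-derivative gains the factor $\frac{e^{-\tau/2}}{\sqrt{1-e^{-\tau}}}\sim \tau^{-1/2}$ as $\tau\to 0$), and then incorporating $W$ through a standard Duhamel/perturbation series in which each iteration is damped by a factor $C\tau$.

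Once this smoothing estimate is in hand, I will split the analysis into two regimes. For $s\in[s_0,s_0+1]$, I apply $\nabla$ to \eqref{for:qint} with $\sigma=s_0$ and write
\begin{equation*}
\nabla q(s) = \nabla\mathcal{K}(s,s_0)q(s_0) + \int_{s_0}^{s}\nabla\mathcal{K}(s,\tau)\bigl[B(q(\tau))+R(\tau)+N(q(\tau),\tau)\bigr]\,d\tau,
\end{equation*}
and bound the first term by $\|\nabla q(s_0)\|_{L^\infty}\leq C/(s_0\sqrt{s_0})$ coming from part $ii)$ of Lemma \ref{lemm:decomdata} (using that the semigroup acts contractively on $\nabla$ for short times once we compare with $e^{\tau\mathcal{L}}$). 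For $s\geq s_0+1$, I instead take $\sigma=s-1$ in \eqref{for:qint} and control $\|\nabla\mathcal{K}(s,s-1)q(s-1)\|_{L^\infty}\leq C\|q(s-1)\|_{L^\infty}\leq CA^2/(s-1)^\varrho\leq C'A^2/s^\varrho$ by \eqref{iq:VAbe}.

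For the forcing integral, the hypothesis $q(s)\in V_A(s)$ together with \eqref{iq:VAbe} yields the uniform bounds $\|B(q(\tau))\|_{L^\infty}\leq C\|q(\tau)\|_{L^\infty}^2\leq CA^4/\tau^{2\varrho}$, while standard estimates on the profile in \eqref{def:varphi} give $\|R(\tau)\|_{L^\infty}\leq C/\tau^{1+\nu}$, and the assumption \eqref{equ:h} or \eqref{equ:h1} on $h$ combined with the mean value theorem applied inside \eqref{def:N} produces $\|N(q(\tau),\tau)\|_{L^\infty}\leq C\|q(\tau)\|_{L^\infty}^2+C\|q(\tau)\|_{L^\infty}/\tau^{a}$, which is absorbed into $CA^4/\tau^{2\varrho}+CA^2/\tau^{\varrho+a}$. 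The smoothing estimate then gives
\begin{equation*}
\int_{\sigma}^{s}\frac{C}{\sqrt{s-\tau}}\bigl(\|B\|_\infty+\|R\|_\infty+\|N\|_\infty\bigr)\,d\tau \leq \frac{CA^2}{s^{\varrho}},
\end{equation*}
since $s-\sigma\leq 1$ and the integrand in $\tau$ contributes an extra $\sqrt{s-\sigma}$, while $\tau\sim s$ throughout the interval.

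The main obstacle I anticipate is justifying the gradient-smoothing estimate for $\mathcal{K}$ despite $V$ being only bounded uniformly on $|y|\lesssim\sqrt{s}$ and having the asymptotic constant $-\frac{p}{p-1}$ outside: one must check that the perturbation expansion converges on the short time scale $\tau\leq 1$ uniformly in $s$, and that no $s$-dependence leaks into the constant $C$ in the smoothing bound. Once this is secured, the decomposition into the two regimes and the straightforward size bounds from $V_A$ close the argument and give the desired estimate $\|\nabla q(s)\|_{L^\infty}\leq CA^2/s^{\varrho}$ uniformly on $[s_0,s_1]$.
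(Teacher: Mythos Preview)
Your approach is correct and aligned with the paper's, which refers to Proposition~3.3 of \cite{EZsema11}: both exploit the $L^\infty$ control on $q$, $B$, $R$, $N$ granted by $q(s)\in V_A(s)$ together with the gradient-smoothing and gradient-preservation properties of the linear semigroup. The only implementation difference is that the paper works with $e^{\theta\mathcal{L}}$ directly, places $Vq$ among the forcing terms, and closes via a Gronwall argument, whereas you absorb $V$ into $\mathcal{K}$ through a short-time perturbation series and use the two-regime split; these are equivalent, and the obstacle you anticipate (uniform-in-$s$ smoothing for $\mathcal{K}$ on time scales $\leq 1$) is exactly what the boundedness of $V$ handles. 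One correction that does not affect your conclusion: the bound $\|R(\tau)\|_{L^\infty}\leq C/\tau^{1+\nu}$ is too optimistic---Lemma~\ref{lemm:estonBRN} gives only $\|R_e(\tau)\|_{L^\infty}\leq C/\tau^{\nu}$, and likewise $|B(q)|\leq C|q|^{\bar p}$ with $\bar p=\min\{p,2\}$ rather than $|q|^2$ when $1<p<2$; but since $\varrho<\nu$ and $\bar p>1$, both corrected bounds still integrate against $(s-\tau)^{-1/2}$ over an interval of length at most one to give $CA^2/s^{\varrho}$.
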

\begin{proof} The proof is the same as Proposition 3.3 of \cite{EZsema11}. We would like to mention that the proof bases on a Gronwall's argument and the following properties of the kernel $e^{\theta \mL}$ defined in \eqref{for:kernalL}:
$$\forall g \in L^\infty, \;\; \|\nabla (e^{\theta \mL}g)\|_{L^\infty} \leq \frac{Ce^{\theta/2}\|g\|_{L^\infty}}{\sqrt{1 - e^{-\theta}}},$$
and 
$$\forall f \in W^{1,\infty},\;\; \|\nabla (e^{\theta \mL}f)\|_{L^\infty} \leq Ce^{\theta/2}\|\nabla f\|_{L^\infty}.$$
Although the definition of $V_A$ is slightly different from the one defined in \cite{EZsema11}, the reader will have absolutely no difficulty to adapt their proof to the new situation. For that reason, we refer the reader to \cite{EZsema11} for details of the proof.
\end{proof}

\subsection{Reduction to a finite dimensional problem}
We are going to the crucial step of the proof of Theorem \ref{theo:1}. In this step, we will show that through a priori estimates, the control of $q(s)$ in $V_A$ reduces to the control of $(q_0,q_1)(s)$ in $\hat{V}_A(s)$.  As presented in \cite{MZdm97} (see also\cite{ZAAihn98},\cite{MZjfa08}), we would like to emphasize that this step make the heart of the contribution. Even more, here lays another major contribution of ours, in the sense that we understand better the dynamics of the fundamental solution $\K(s,\sigma)$ defined in \eqref{def:kernel}. Our sharper estimates are given in Lemma \ref{lemm:BKespri} below. In fact all that we do is to rewrite the corresponding estimates of Bricmont and Kupiainen \cite{BKnon94} without taking  into account the particular form of the shrinking set they used. Furthermore, because of the difference in the definition \eqref{def:varphi} of $\varphi$ and the difference in the definition of $V_A$, the proof is far from being an adaptation of the proof written in \cite{MZdm97}. We therefore need some involved arguments to control the components of $q$ and conclude the reduction to a finite dimensional problem. \\

\noindent We mainly claim the following:
\begin{prop}[\textbf{Control of $q(s)$ by $(q_0,q_1)(s)$ in $V_A(s)$}] \label{prop:redu} There exist $A_3 > 0$ such that for each $A \geq A_3$, there exists $\delta_3(A) > 0$ such that for each $s_0 \geq \delta_3(A)$, we have the following properties:\\
- if $(d_0, d_1)$ is chosen so that $(q_0,q_1)(s_0) \in \hat{V}_A(s_0)$, and \\
- if for all $s \in [s_0,s_1]$, $q(s) \in V_A(s)$ and $q(s_1) \in \partial V_A(s_1)$ for some $s_1 \geq s_0$, then:\\
$i)\;$ \textbf{(Reduction to a finite dimensional problem)} $\;\; (q_0, q_1)(s_1) \in \partial \hat{V}_A(s_1)$,\\
$ii)$ \textbf{(Transversality)} there exists $\eta_0 > 0$ such that for all $\eta \in (0, \eta_0)$, $\; (q_0, q_1)(s_1 + \eta) \not\in \partial \hat{V}_A(s_1 + \eta)$ (hence, $q(s_1 + \eta) \not\in V_A(s_1 + \eta)$).
\end{prop}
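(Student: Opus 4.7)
The overall strategy is the modulation-theoretic bootstrap of Merle and Zaag \cite{MZdm97}. Starting from the integral equation \eqref{for:qint}, I will project onto the five components of the decomposition \eqref{def:decomq} and combine the hypothesis $q(s) \in V_A(s)$ on $[s_0,s_1]$ with the sharp spectral estimates for $\mathcal{K}(s,\sigma)$ from Lemma \ref{lemm:BKespri} to prove that every component other than $q_0$ and $q_1$ satisfies its defining bound with strict inequality at $s = s_1$. Since $q(s_1) \in \partial V_A(s_1)$, this will force $(q_0,q_1)(s_1) \in \partial \hat{V}_A(s_1)$, which is part $i)$.

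The key a priori estimates will be obtained as follows. For the source term $R$ given by \eqref{def:R}, the crucial observation is that by the choice of profile \eqref{def:varphi}, the function $\varphi$ is built from $\phi$, an exact solution of \eqref{equ:phiODE}; a Taylor expansion in $z = y/\sqrt{s}$ then shows that the projections $R_0$, $R_1$, $R_2$ are of size $O(1/s^{a+1} + 1/s^2)$, that $R_-$ is of size $O((1/s^{a+3/2} + 1/s^2)(1+|y|^3))$, and that $R_e$ is of size $O(1/s^a)$. This one-power gain in $1/s$ over the naive bound is precisely what the multiplicative factor $\phi(s)/\kappa$ was introduced for in \eqref{def:varphi}. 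The quadratic term $B(q)$ is controlled using \eqref{iq:VAbe}; the perturbation $N(q,s)$ is estimated from \eqref{equ:h} or \eqref{equ:h1} together with the $L^\infty$ bound on $q$; and the linear term $V q$ is handled from the decay of $V$ in $L^2_\rho$ inside the blow-up region and from the asymptotic behavior \eqref{equ:asymV} outside. Integrating each projection against the semigroup, whose mode-by-mode decay and growth rates are furnished by Lemma \ref{lemm:BKespri}, will then yield the three strict inequalities
\begin{equation*}
|q_2(s_1)| \le \frac{A^2}{2 s_1^{1+\nu}}, \quad \left\| \frac{q_-(\cdot,s_1)}{1+|y|^3} \right\|_{L^\infty} \le \frac{A}{2 s_1^{3/2+\varrho}}, \quad \|q_e(s_1)\|_{L^\infty} \le \frac{A^2}{2 s_1^\varrho},
\end{equation*}
provided $A \ge A_3$ and $s_0 \ge \delta_3(A)$ are large enough. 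This establishes $i)$.

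For part $ii)$, projecting \eqref{equ:q} on $h_0$ and on $h_1$ and re-using the same estimates, I will derive the scalar differential equations
\begin{equation*}
q_m'(s) = \left(1 - \tfrac{m}{2}\right) q_m(s) + O\left(\tfrac{1}{s^{1 + \nu + \delta}}\right), \qquad m = 0, 1,
\end{equation*}
valid on $[s_0,s_1]$ for some $\delta > 0$. If $q_m(s_1) = \epsilon A / s_1^{1+\nu}$ with $\epsilon \in \{-1,+1\}$, then the sign of $\epsilon\, q_m'(s_1)$ coincides with that of $(1 - m/2)$, which is strictly positive for $m \in \{0,1\}$; hence $\epsilon\, q_m(s)$ exceeds $A / s^{1+\nu}$ for every $s \in (s_1, s_1 + \eta_0)$ with $\eta_0 > 0$ small enough, which is the transversality statement and implies in particular that $q(s_1 + \eta) \notin V_A(s_1 + \eta)$.

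The main obstacle will be the $q_2$ estimate. The eigenmode $h_2$ corresponds to the zero eigenvalue of $\mathcal{L}$, so the linear part of the semigroup produces no decay and the entire gain must come from the potential $V$. Expanding $V$ to first order in $1/s$ will reveal a dominant term of the form $-\alpha/s$ on the $h_2$-mode with $\alpha > 0$, which upon integration produces the polynomial decay needed to push $q_2(s_1)$ strictly below $A^2 / s_1^{1+\nu}$. Controlling this delicate cancellation simultaneously with the new source contribution of order $1/s^{a+1}$ arising from the perturbation $h$, and with the new shrinking-set threshold $A^2/s^{1+\nu}$ (weaker than the $A^2/s^2$ used in \cite{MZdm97}), is precisely the step that forces the sharper form of the kernel estimates collected in Lemma \ref{lemm:BKespri}.
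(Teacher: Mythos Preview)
Your proposal follows the paper's strategy closely: project the Duhamel formula \eqref{for:qint} onto the five components, use Lemma \ref{lemm:BKespri} together with the source estimates on $B$, $R$, $N$ to improve the bounds on $q_2$, $q_-$, $q_e$, and handle transversality via the scalar ODEs for $q_0$, $q_1$. The treatment of $q_-$, $q_e$ and of part $ii)$ is exactly what the paper does.

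There is one point where the paper's implementation differs from what you describe, and it is precisely the $q_2$ estimate you flag as the main obstacle. You propose to close it through the kernel bound \eqref{equ:boundThe_2} of Lemma \ref{lemm:BKespri}. But as stated there, the error term carries $\frac{C(s-\sigma)}{s}\,|\psi_2(\sigma)|$, which for $\psi=q(\sigma)\in V_A(\sigma)$ gives a loss of order $CA^2(s-\sigma)/s^{2+\nu}$; the gain from the leading factor $(\sigma/s)^2$ is only $(1-\nu)A^2(s-\sigma)/s^{2+\nu}$, and since $1-\nu\le 1$ while the universal constant $C$ is not under your control, the bootstrap need not close for large $A$. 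The paper says this explicitly (``the argument given in \cite{MZdm97} does not work here to control $|q_2(s)|$'') and instead derives the scalar ODE
\[
\Big|q_2'(s)+\tfrac{2}{s}\,q_2(s)\Big|\le \frac{C}{s^{2+\nu}}
\]
(Lemma \ref{lemm:eqm12}), where the right-hand side is free of powers of $A$ once $s_0$ is taken large enough. A contradiction argument at the first time $s_*$ where $|q_2(s_*)|=A^2/s_*^{1+\nu}$ then compares $q_2'(s_*)$ with the derivative of the threshold and closes because $2>1+\nu$. Your identification of the mechanism (the $-\alpha/s$ contribution of $V$ on the $h_2$-mode, here $\alpha=2$) is correct; the point is only that the clean way to exploit it is this ODE argument rather than the Duhamel bound of Lemma \ref{lemm:BKespri}. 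Accordingly the paper obtains only the strict inequality $|q_2(s)|<A^2/s^{1+\nu}$, not the factor $\tfrac12$ you wrote.

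A secondary imprecision: your claimed size $O(1/s^{a+1}+1/s^2)$ for $R_2$ is not sharp enough for the ODE above. The paper splits $R=\frac{\phi}{\kappa}Q+G$ and uses $Q_2=O(1/s^3)$ (this is the classical computation from \cite{BKnon94}) together with $G_2=O(1/s^{1+a'})$, $a'>1$, to reach $R_2=O(1/s^{2+\nu})$.
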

The proof follows the general ideas of \cite{MZdm97} and we proceed in three steps:\\
- Step 1: we give a priori estimates  on $q(s)$ in $V_A(s)$: assume that for given $A > 0$ lager, $\lambda > 0$ and an initial time $s_0 \geq \sigma_2(A,\lambda) \geq 1$, we have $q(s) \in V_A(s)$ for each $s \in [\tau, \tau + \lambda]$ where $\tau \geq s_0$, then using the integral form \eqref{for:qint} of $q(s)$, we derive new bounds on $q_2(s), q_-(s)$ and $q_e(s)$ for $s \in [\tau, \tau + \lambda]$.\\
- Step 2: we show that these new bounds are better than those defining $V_A(s)$. It then remains to control $q_0(s)$ and $q_1(s)$. This means that the problem is reduced to the control  of a two dimensional variable $(q_0, q_1)(s)$ and we then conclude $i)$ of Proposition \ref{prop:redu}.\\
- Step 3: we use dynamics of $(q_0, q_1)(s)$ to show its transversality on $\partial V_A(s)$, which corresponds to part $ii)$ of Proposition \ref{prop:redu}.

\subsubsection*{Step 1: A priori estimates on $q(s)$ in $V_A(s)$}
As indicated above, the derivation of the new bounds on the components of $q(s)$ bases on the integral formula \eqref{for:qint}. It is clear to see the strong influence of the kernel $\K$ in this formula. Therefore, it is convenient to give the following result from Bricmont and Kupiainen in \cite{BKnon94} which gives the dynamics of the linear operator $\mathcal{L} + V$:
\begin{lemm}[\textbf{Bricmont and Kupiainen \cite{BKnon94}}] \label{lemm:BKespri} For all $\lambda > 0$, there exists $\sigma_0 = \sigma_0(\lambda)$ such that if $\sigma \geq \sigma_0 \geq 1$ and $\psi(\sigma)$ satisfies 
\begin{equation}\label{equ:boundpsisigma}
\sum_{m=0}^2|\psi_m(\sigma)| + \left\|\frac{\psi_-(y,\sigma)}{1+|y|^3}\right\|_{L^\infty} + \|\psi_e(\sigma)\|_{L^\infty} < +\infty,
\end{equation}
then, $\theta(s) = \mathcal{K}(s,\sigma)\psi(\sigma)$ satisfies for all $s \in [\sigma, \sigma + \lambda]$,
\begin{align}
|\theta_2(s)| &\leq \left(\frac{\sigma}{s}\right)^2 |\psi_2(\sigma)|+\frac{C(s-\sigma)}{s} \left(\sum_{l=0}^2|\psi_l(\sigma)| + \left\|\frac{\psi_-(y,\sigma)}{1+|y|^3} \right\|_{L^\infty}\right)\nonumber\\
& \qquad + C(s-\sigma)e^{-s/2}\|\psi_e(\sigma)\|_{L^\infty},\label{equ:boundThe_2}\\
\left\|\frac{\theta_-(y,s)}{1+|y|^3}\right\|_{L^\infty} &\leq \frac{Ce^{s - \sigma}\left((s - \sigma)^2 + 1\right)}{s}\left(|\psi_0(\sigma)| + |\psi_1(\sigma)|+\sqrt{s}|\psi_2(\sigma)|\right)\nonumber\\
&\qquad + C e^{-\frac{(s - \sigma)}{2}} \left\|\frac{\psi_-(y,\sigma)}{1 + |y|^3}\right\|_{L^\infty} + \frac{Ce^{-(s - \sigma)^2}}{s^{3/2}}\|\psi_e(\sigma)\|_{L^\infty},\label{equ:boundThe_ne}\\
\|\theta_e(s)\|_{L^\infty} &\leq Ce^{s-\sigma}\left(\sum_{l=0}^2 s^{l/2}|\psi_l(\sigma)| + s^{3/2}\left\|\frac{\psi_-(y,\sigma)}{1 + |y|^3}\right\|_{L^\infty} \right)\nonumber\\
&\qquad + C e^{-\frac{(s-\sigma)}{p}}\|\psi_e(\sigma)\|_{L^\infty}.\label{equ:boundThe_e}
\end{align}
where $C = C(\lambda,K)>0$ ($K$ is given in \eqref{def:chi}), $\psi_m,\psi_-,\psi_e$ and $\theta_m, \theta_-, \theta_e$ are defined by \eqref{de:qbqe} and \eqref{def:decomq}.
\end{lemm}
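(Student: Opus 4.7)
The plan is to derive all three bounds from a Duhamel expansion of $\K(s,\sigma)$ around the free Ornstein--Uhlenbeck semigroup $e^{\theta\mL}$, combined with the spectral decomposition recalled in Section~2. Starting from
\begin{equation*}
\theta(s) = e^{(s-\sigma)\mL}\psi(\sigma) + \int_\sigma^s e^{(s-\tau)\mL}\, V(\tau)\,\theta(\tau)\,d\tau,
\end{equation*}
I would decompose $\theta$ as in \eqref{de:qbqe}, project the inner part $\chi\theta$ onto the Hermite basis $\{h_m\}$ from \eqref{equ:eigenfu1}, treat $V$ perturbatively inside the blow-up region, and exploit \eqref{equ:asymV} outside. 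Iterating this Duhamel identity a finite number of times (the number determined by $\lambda$) reduces each estimate to a scalar Gronwall argument on the corresponding component.

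Inside $|y|\leq 2K\sqrt{s}$ the free flow $e^{\theta\mL}$ is diagonal in the $h_m$ basis with multiplier $e^{(1-m/2)\theta}$; this directly produces the factors $e^{s-\sigma}$, $e^{(s-\sigma)/2}$ and $1$ that multiply $\psi_0,\psi_1,\psi_2$ in the $\theta_-$ and $\theta_e$ bounds, the decay $e^{-(s-\sigma)/2}$ for $\theta_-$ itself, and the marginality of the mode $m=2$ under the free flow. The contribution of $V$ enters through the Duhamel integral; the decisive point is that inside the blow-up region $V(y,s)$ admits a large-$s$ expansion whose diagonal matrix element on $h_2$ satisfies $\lag V h_2, k_2 \rag_\rho = -\tfrac{2}{s} + O(s^{-2})$, so that the scalar ODE driving $\theta_2$ takes the form $\partial_s\theta_2 + \tfrac{2}{s}\theta_2 = \cdots$, whose explicit resolvent $(\sigma/s)^2$ is precisely the sharp factor in \eqref{equ:boundThe_2}; all off-diagonal matrix elements of $V$ and all contributions coming from the lower modes then merge into the remainder $C(s-\sigma)/s$. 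In the outer region, property \eqref{equ:asymV} allows one to compare $\mL+V$ with $\mL-\tfrac{p}{p-1}$, whose semigroup has purely negative spectrum, giving the factor $e^{-(s-\sigma)/p}$ in \eqref{equ:boundThe_e} (the small loss from $1/(p-1)$ to $1/p$ absorbs the $\epsilon$ error in \eqref{equ:asymV}). The stretched Gaussian $e^{-(s-\sigma)^2}$ in \eqref{equ:boundThe_ne} has a distinct origin: Mehler's kernel carries a function supported in $|y|\geq K\sqrt{s}$ into a polynomially weighted $L^\infty$ norm on compact sets only at that cost.

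The main technical obstacle is the coupling between the inner Hermite part and the outer part, which is generated by the commutator $[\mL+V,\chi]$ supported in the annulus $K\sqrt{s}\leq |y|\leq 2K\sqrt{s}$. Controlling how this commutator transfers mass between the five components $\psi_0,\psi_1,\psi_2,\psi_-$ and $\psi_e$ is what forces the polynomial weight $1+|y|^3$ in the negative-mode estimate and dictates the precise arrangement of the cross terms in \eqref{equ:boundThe_2}--\eqref{equ:boundThe_e}; one must check that the annular support of $\nabla\chi$ produces, after integration against the Gaussian weight $\rho$, contributions negligible compared with the stated bounds. Once this bookkeeping is carried out, the three estimates follow by projecting the Duhamel identity onto $h_0,h_1,h_2$, onto the negative subspace, and onto the outer piece, and running Gronwall on each scalar unknown over $[\sigma,\sigma+\lambda]$; the resulting constants depend only on $\lambda$ and $K$, as stated.
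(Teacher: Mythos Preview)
Your plan is a genuinely different route from the paper's. The paper does \emph{not} iterate a Duhamel formula and close a Gronwall loop on the five scalar unknowns. Instead it works directly with the \emph{kernel} of $\K(s,\sigma)$ via the Feynman--Kac representation
\[
\K(s,\sigma,y,x)=e^{(s-\sigma)\mL}(y,x)\int d\mu_{yx}^{s-\sigma}(\omega)\,\exp\!\Big(\int_0^{s-\sigma}V(\omega(\tau),\sigma+\tau)\,d\tau\Big),
\]
and extracts from it three pointwise facts (their Lemma on $\K$): a crude domination $|\K|\le C e^{(s-\sigma)\mL}$; a second-order expansion $\K=e^{(s-\sigma)\mL}(1+P_2+P_4)$ with polynomial control of $P_2,P_4$, together with the \emph{exact} coefficient $\langle k_2,(\K(s,\sigma)-(\sigma/s)^2)h_2\rangle=O((s-\sigma)(1+s-\sigma)s^{-1-\bar a})$; and the outer contraction $\|\K(s,\sigma)(1-\chi)\|_{L^\infty}\le Ce^{-(s-\sigma)/p}$. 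Each of \eqref{equ:boundThe_2}--\eqref{equ:boundThe_e} is then a direct algebraic computation with these kernel estimates, with no Gronwall step at all. In particular, the sharp $(\sigma/s)^2$ in \eqref{equ:boundThe_2} is not obtained by integrating an ODE $\theta_2'+\tfrac{2}{s}\theta_2=\dots$; it is read off from the second kernel estimate applied to $h_2$.

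Your Duhamel/Gronwall scheme is in principle viable, and you have identified the right mechanisms (the $-2/s$ diagonal entry on $h_2$, the outer spectral gap giving $e^{-(s-\sigma)/p}$, Mehler's localisation for the $e^{-(s-\sigma)^2}$ factor). The one place where I do not think your sketch closes as written is the transfer $\psi_-\mapsto\theta_-$ in the weighted norm $\|\cdot/(1+|y|^3)\|_{L^\infty}$. Saying ``the free flow is diagonal with multiplier $e^{(1-m/2)\theta}$'' is a statement in $L^2_\rho$; it does not by itself give $\|e^{\theta\mL}\psi_-/(1+|y|^3)\|_{L^\infty}\le Ce^{-\theta/2}\|\psi_-/(1+|y|^3)\|_{L^\infty}$. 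The paper gets this by a genuinely non-obvious device: writing $\K(s,\sigma)\psi_-=\int N(y,x)E(y,x)f(x)\,dx$ with $f=e^{-x^2/4}\psi_-$, integrating by parts three times in $x$, and using that the successive antiderivatives $f^{(-m)}$ inherit the Gaussian decay because $\psi_-\perp h_0,h_1,h_2$ in $L^2_\rho$. Whatever route you take, you will need an argument of comparable strength here; the commutator bookkeeping you mention does not supply it.
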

\begin{rema} In view of the formula \eqref{for:qint}, we see that Lemma \ref{lemm:BKespri} will play an important role in deriving the new bounds on the components of $q(s)$ and making our proof simpler. This means that, given bounds on the components of $q(\sigma), B(q(\tau)), R(\tau), N(q(\tau),\tau)$, we directly apply Lemma \ref{lemm:BKespri} with $\K(s, \sigma)$ replaced by $\K(s,\tau)$ and then integrate over $\tau$ to obtain estimates on the components of $q$.
\end{rema}
\begin{proof} Let us mention that Lemma \ref{lemm:BKespri} relays mainly on the understanding of the behavior of the kernel $\mathcal{K}(s,\sigma)$. The proof is essentially the same as in \cite{BKnon94}, but those estimates did not present explicitly the dependence on all the components of $\psi(\sigma)$ which is less convenient for our analysis below. Because the proof is long and technical, we leave it to Appendix \ref{ap:2a}.
\end{proof}

We now assume that for each $\lambda > 0$, for each $s \in [\sigma, \sigma + \lambda]$, we have $q(s) \in V_A(s)$ with $\sigma \geq s_0$. Applying Lemma \ref{lemm:BKespri}, we get new bounds on all terms in the right hand side of \eqref{for:qint}, and then on $q$. More precisely, we claim the following:
\begin{lemm}\label{lemm:estallterms} There exists $A_2 > 0$ such that for each $A \geq A_2$, $\lambda^* > 0$, there exists $\sigma_2(A,\lambda^*) > 0$ with the following property: for all $s_0 \geq \sigma_2(A, \lambda^*)$, for all $\lambda \leq \lambda^*$, assume that for all $s \in [\sigma, \sigma + \lambda]$, $q(s) \in V_A(s)$ with $\sigma \geq s_0$, then we have for all $s \in [\sigma, \sigma + \lambda]$,\\
$i)\;$ \textbf{(linear term)}
\begin{align*}
|\alpha_2(s)| &\leq \left(\frac{\sigma}{s} \right)^{1 -\nu}\frac{A^2}{s^{1 + \nu}} + \frac{CA^2(s - \sigma)}{s^{2 + \nu}},\\
\left\|\frac{\alpha_-(y,s)}{1 + |y|^3}\right\|_{L^\infty} &\leq \frac{C}{s^{3/2 + \varrho}} + \frac{C}{s^{3/2 + \varrho}}\left(Ae^{-\frac{s - \sigma}{2}} + A^2e^{-(s - \sigma)^2}\right) ,\\
\|\alpha_e(s)\|_{L^\infty} &\leq \frac{C}{s^\varrho} + \frac{C}{s^\varrho}\left(A e^{s - \sigma} + A^2e^{-\frac{s-\sigma}{p}} \right),
\end{align*}
where 
$$\K(s,\sigma)q(\sigma) = \alpha(y,s) = \sum_{m=0}^2\alpha_m(s)h_m(y) + \alpha_-(y,s) + \alpha_e(y,s).$$
If $\sigma = s_0$, we assume in addition that $(d_0,d_1)$ is chosen so that $(q_0,q_1)(s_0) \in \hat{V}_A(s_0)$. Then for all $s \in [s_0, s_0 + \lambda]$, we have
\begin{align*}
&|\alpha_2(s)| \leq \frac{C}{s^2},\quad \left\|\frac{\alpha_-(y,s)}{1 + |y|^3}\right\|_{L^\infty} \leq  \frac{C}{s^2},\quad \|\alpha_e(s)\|_{L^\infty} \leq \frac{Ce^{s - s_0}}{\sqrt{s}}.
\end{align*}
$ii)\,$ \textbf{(remaining terms)}
\begin{align*}
&|\beta_2(s)| \leq \frac{C(s - \sigma)}{s^{2 + \nu}},\quad
\left\|\frac{\beta_-(y,s)}{1 + |y|^3}\right\|_{L^\infty} \leq \frac{C}{s^{3/2 + \varrho}}, \quad \|\beta_e(s)\|_{L^\infty} \leq \frac{C}{s^{\varrho}},
\end{align*}
where 
\begin{align*}
&\int_\sigma^s\K(s,\tau)\left[B(q(\tau)) + R(\tau) + N(q(\tau),\tau) \right]d\tau \\
&\qquad \qquad = \beta(y,s) = \sum_{m=0}^2\beta_m(s)h_m(y) + \beta_-(y,s) + \beta_e(y,s).
\end{align*}
\end{lemm}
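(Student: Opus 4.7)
The plan is to estimate the two summands in the integral representation \eqref{for:qint} separately. Writing
$$q(s) = \alpha(\cdot,s) + \beta(\cdot,s), \qquad \alpha(\cdot,s) = \K(s,\sigma)q(\sigma), \qquad \beta(\cdot,s) = \int_\sigma^s \K(s,\tau)\bigl[B(q(\tau)) + R(\tau) + N(q(\tau),\tau)\bigr]d\tau,$$
I would estimate the components $\alpha_2,\alpha_-,\alpha_e$ and $\beta_2,\beta_-,\beta_e$ by feeding the appropriate pointwise/weighted $L^\infty$ bounds into Lemma \ref{lemm:BKespri} and, for $\beta$, integrating in $\tau$.

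For part $(i)$, since $q(\sigma)\in V_A(\sigma)$ by assumption, the definition of the shrinking set directly provides the data \eqref{equ:boundpsisigma} needed to apply Lemma \ref{lemm:BKespri} with $\psi = q(\sigma)$. Substituting $|q_m(\sigma)|\leq A/\sigma^{1+\nu}$ for $m=0,1$, $|q_2(\sigma)|\leq A^2/\sigma^{1+\nu}$, $\|q_-(\cdot,\sigma)/(1+|y|^3)\|_{L^\infty}\leq A/\sigma^{3/2+\varrho}$ and $\|q_e(\sigma)\|_{L^\infty}\leq A^2/\sigma^\varrho$ into \eqref{equ:boundThe_2}--\eqref{equ:boundThe_e} yields the announced estimates after elementary simplification (noting in particular that $(\sigma/s)^2\cdot \sigma^{-(1+\nu)} = (\sigma/s)^{1-\nu}\cdot s^{-(1+\nu)}$, which gives the leading term in the $\alpha_2$ bound). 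For the special case $\sigma = s_0$ with $(q_0,q_1)(s_0)\in\hat V_A(s_0)$, I would instead use the sharper bounds from Lemma \ref{lemm:decomdata}$(ii)$ (namely $|q_2(s_0)|,\|q_-(\cdot,s_0)/(1+|y|^3)\|_{L^\infty}\leq C/s_0^2$ and $\|q_e(s_0)\|_{L^\infty}\leq C/s_0$), and feed these into Lemma \ref{lemm:BKespri} to read off the three improved estimates listed at the end of part $(i)$.

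For part $(ii)$ the main task is to produce component-by-component bounds on the forcing $F(\tau) = B(q(\tau)) + R(\tau) + N(q(\tau),\tau)$ when $q(\tau)\in V_A(\tau)$, apply Lemma \ref{lemm:BKespri} with $\sigma$ replaced by $\tau$, and integrate in $\tau$. The nonlinear term $B(q)$ is at least quadratic in $q$ near $\varphi\approx\kappa$, so by \eqref{iq:VAbe} its size is at least one $s^{-\varrho}$-factor below that of $q$ itself on $V_A$, and the standard Merle--Zaag projection estimates transfer to the present setting essentially unchanged. The perturbation remainder $N(q,\cdot)$ in \eqref{def:N} is super-linear in $q$: in the case \eqref{equ:h} it carries an additional prefactor $e^{-s}$ (since $\imath=0$ removes the linearization), while in the case \eqref{equ:h1} a second-order Taylor expansion using $h''$ yields a quadratic-in-$q$ bound with a $\log^{-a}$ weight; in both cases the projections are manifestly absorbed into the target bounds. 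The delicate piece is the ansatz error $R$ from \eqref{def:R}, which is the whole point of our modified profile: writing $\varphi = (\phi(s)/\kappa)\tilde\varphi$ with $\tilde\varphi = f(y/\sqrt s) + \kappa/(2ps)$ and substituting the ODE \eqref{equ:phiODE} satisfied by $\phi$, the leading $s^{-a}$ contribution produced by $h(e^{s/(p-1)}\varphi)$ is cancelled by the corresponding contribution from $\phi_s\tilde\varphi/\kappa$, leaving only an $s^{-(a+1)}$ piece (plus the usual $O(1/s^2)$ heat/transport errors that were already present in the unperturbed problem \cite{MZdm97}). Projecting these sharper bounds onto $h_0,h_1,h_2$, onto the negative subspace and onto the exterior cut-off, then integrating in $\tau\in[\sigma,s]$ using $s-\sigma\leq\lambda^\ast$ to tame the exponential factors in \eqref{equ:boundThe_ne}--\eqref{equ:boundThe_e}, produces the claimed bounds on $\beta_2,\beta_-,\beta_e$.

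The main obstacle is precisely establishing the $s^{-(a+1)}$ bound on $R$. One must expand $\phi(s) = \kappa + O(s^{-a})$ from \eqref{equ:phiODE}, substitute into $\varphi$, verify that the perturbation terms generated by the $O(s^{-a})$ correction to $\phi$ and by the nonlinear $h(e^{s/(p-1)}\varphi)$ term cancel to leading order as functions of $y$, and control the residue uniformly in both the inner region $|y|\leq 2K\sqrt s$ and the outer region. This cancellation is the technical heart of the modification \eqref{def:varphi}; without it one would only obtain $R = O(s^{-a})$, which is insufficient to close the bootstrap when $a$ is small, and everything else in the argument follows the general scheme of \cite{MZdm97} with only bookkeeping changes due to the new decay rates in $V_A(s)$.
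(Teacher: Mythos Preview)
Your approach is essentially the one in the paper: part $(i)$ is obtained by feeding the $V_A(\sigma)$ bounds (respectively the sharper Lemma~\ref{lemm:decomdata}$(ii)$ bounds when $\sigma=s_0$) into Lemma~\ref{lemm:BKespri}, and part $(ii)$ is obtained by first establishing componentwise bounds on $B$, $R$, $N$ (packaged in the paper as a separate Lemma~\ref{lemm:estonBRN}, with the $R$ analysis carried out exactly via the splitting $R=(\phi/\kappa)Q+G$ and the ODE cancellation you describe), then applying Lemma~\ref{lemm:BKespri} with $\sigma$ replaced by $\tau$ and integrating.

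One small correction: in case \eqref{equ:h} the term $N$ does \emph{not} carry an $e^{-s}$ prefactor. With $\imath=0$ one has $N=e^{-ps/(p-1)}\bigl[h(e^{s/(p-1)}(\varphi+q))-h(e^{s/(p-1)}\varphi)\bigr]$, and a first-order Taylor expansion gives $N=e^{-s}h'\bigl(e^{s/(p-1)}(\varphi+\theta q)\bigr)q$; the growth bound \eqref{equ:h} on $h'$ then converts the apparent $e^{-s}$ into a $s^{-a}$ factor only, yielding $|N|\leq C s^{-a}|q|$ (this is exactly what the paper records). Since $a>1$ in that case and $\nu\leq a-1$, this is still more than enough to produce the bounds in \eqref{equ:estalltermsofN}, so your argument closes, but the mechanism is logarithmic gain from $h'$ rather than exponential decay.
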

\begin{proof} $i)$ It immediately follows from the definition of $V_A(\sigma)$ and Lemma \ref{lemm:BKespri}. Note that in the case $\sigma = s_0$, we use in addition part $ii)$ of Lemma \ref{lemm:decomdata} to have the conclusion. For part $ii)$, all what we need to do is to find the estimates on the components of different terms appearing in equation \eqref{equ:q}, then we use Lemma \ref{lemm:BKespri} and the linearity to have the conclusion. We claim the following:
\begin{lemm}\label{lemm:estonBRN} We have the following properties:\\
$i)$ \textbf{(Estimates on $B(q)$)} For all $A > 0$, there exists $\sigma_3(A)$ such that for all $\tau \geq \sigma_3(A)$, $q(\tau) \in V_A(\tau)$ implies\\
\begin{equation}\label{est:alltermsofB}
m = 0,1,2, \; |B_m(\tau)| \leq \frac{CA^4}{\tau^{2 + 2\nu}}, \; \left|\frac{B_-(y,\tau)}{1+|y|^3}\right| \leq \frac{CA^4}{\tau^{3/2 + 2\varrho}},\; \|B_e(\tau)\|_{L^\infty} \leq \frac{CA^{2p'}}{\tau^{\varrho p'}},
\end{equation}
where $p' = \min \{p,2\}$.\\
$ii)$ \textbf{(Estimates on $R$)} There exists $\sigma_4 > 0$ such that for all $\tau \geq \sigma_4$,\\
\begin{align}
m = 0,1, \;&|R_m(\tau)| \leq \frac{C}{\tau^2},\quad |R_2(\tau)| \leq \frac{C}{\tau^{2 + \nu}},\nonumber\\
\text{and} \;\; &\left\|\frac{R_-(y,\tau)}{1 + |y|^3} \right\|_{L^\infty} \leq \frac{C}{\tau^2}, \quad \|R_e(\tau)\|_{L^\infty} \leq \frac{C}{\tau^\nu}.&\label{equ:estalltermsonR}
\end{align}
$iii)$ \textbf{(Estimates on $N(q,\tau)$)} For all $A > 0$, there exists $\sigma_5(A)$ such that for all $\tau \geq \sigma_5(A)$, $q(\tau) \in V_A(\tau)$ implies\\
\begin{equation} \label{equ:estalltermsofN}
m=0,1,2,\;|N_m(\tau)| \leq \frac{CA^4}{\tau^{2 + 2\nu}},\; \left\|\frac{N_-(y,\tau)}{1 + |y|^3}\right\|_{L^\infty}  \leq \frac{CA^4}{\tau^{2 + 2\varrho}}, \; \|N_e(\tau)\|_{L^\infty} \leq \frac{CA^4}{\tau^{2\varrho}}.
\end{equation}
\end{lemm}
\begin{proof} Since the proof is technical, we leave it to Appendix \ref{ap:lemmEstAll}.
\end{proof}
\vspace*{0.3cm}
\noindent Substituting the estimates stated in Lemma \ref{lemm:estonBRN} into Lemma \ref{lemm:BKespri}, then integrating over $[\sigma,s]$ with respect to $\tau$, and taking $\sigma_2(A,\lambda^*) \geq \max \{\sigma_3,\sigma_4,\sigma_5\}$ such that 
$$\forall s \geq \sigma_2(A,\lambda^*),\;\;(A^4+1)e^{\lambda^*}((\lambda^*)^3 + 1)\left(s^{-\varrho(p' - 1)} + s^{-(\nu - \varrho)} \right) \leq 1,$$
with $p' = \min\{p,2\}$, we have the conclusion. This ends the proof of Lemma \ref{lemm:estallterms}.
\end{proof}

Thanks to Lemma \ref{lemm:estonBRN}, we obtain the following equations satisfied by the expanding modes:
\begin{lemm}[\textbf{ODE satisfied by the expanding modes}] \label{lemm:eqm12} For all $A > 0$, there exists $\sigma_6(A)$ such that for all $s \geq \sigma_6(A)$, $q(s)\in V_A(s)$ implies that for all $s \geq \sigma_6(A)$,
\begin{equation}\label{equ:eqm12}
m = 0,1,\quad \left|q_m'(s) - (1 - \frac{m}{2})q_m(s) \right|\leq \frac{C}{s^{3/2 + \nu}},
\end{equation}
and 
\begin{equation}\label{equ:odeq2}
\left|q_2'(s) + \frac 2s\;q_2(s) \right|\leq \frac{C}{s^{2 + \nu}}.
\end{equation}
\end{lemm}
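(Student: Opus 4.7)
The plan is to derive each ODE by projecting equation~\eqref{equ:q} onto the basis element $k_m$ in the weighted space $L^2_\rho$, starting from the definition $q_m(s) = \langle \chi(\cdot,s)\,q(\cdot,s),\,k_m\rangle_\rho$. Differentiating $q_m$ in $s$ produces four kinds of contributions. The $\chi_s q$ term is supported on the annulus $K\sqrt{s}\le|y|\le 2K\sqrt{s}$ and is exponentially small in $s$ thanks to the Gaussian weight $\rho$. The $\mathcal{L}$-term, after writing $\mathcal{L}(\chi q)=\chi\mathcal{L}q+[\mathcal{L},\chi]q$ and using selfadjointness of $\mathcal{L}$ together with $\mathcal{L}k_m=(1-\tfrac{m}{2})k_m$, reduces to $(1-\tfrac{m}{2})q_m(s)$ plus an exponentially small commutator piece. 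The projections of $B(q)$, $R$ and $N$ are directly controlled by Lemma~\ref{lemm:estonBRN}. Only the $V$-projection $\langle \chi Vq,k_m\rangle_\rho$ requires genuine work.

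For $m=0,1$, the $V$-projection is handled crudely: the bound $|V(y,s)|\le C(1+y^2)/s$ on $|y|\le 2K\sqrt{s}$, combined with the pointwise estimate on $q$ coming from Proposition~\ref{prop:1}, gives $|\langle \chi Vq,k_m\rangle_\rho|\le CA^2/s^{2+\nu}$. Together with $|R_m|\le C/s^2$ and $|B_m|+|N_m|\le CA^4/s^{2+2\nu}$ from Lemma~\ref{lemm:estonBRN}, this is absorbed by the target $C/s^{3/2+\nu}$ since $\nu\le 1/2$, yielding~\eqref{equ:eqm12}.

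The case $m=2$ is the heart of the proof: the eigenvalue $1-m/2$ vanishes, so the $-2/s$ dissipation must come entirely from $\langle \chi Vq,k_2\rangle_\rho$. A careful expansion in the blow-up region gives
\[
V(y,s) = -\frac{y^{2}}{4s} + \frac{1}{2s} + O\!\left(\tfrac{1+y^{4}}{s^{2}}\right) + O\!\left(\tfrac{1}{s^{a+1}}\right),
\]
where the extra $+1/(2s)$ is precisely produced by the constant correction $\kappa/(2ps)$ built into~\eqref{def:varphi}. Since $-y^{2}/(4s)+1/(2s) = -h_2(y)/(4s)$, the leading part of $V$ collapses to a clean $h_2$-multiple, and the Hermite identity $\langle h_2^{3},1\rangle_\rho/\|h_2\|^{2}_\rho=8$ turns the projection of $-(h_2/(4s))\,q_2 h_2$ onto $k_2$ into exactly $-(2/s)\,q_2(s)$. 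The cross contributions $\langle V q_{m'}h_{m'},k_2\rangle_\rho$ for $m'\in\{0,1\}$, as well as $\langle V q_-,k_2\rangle_\rho$ and $\langle V q_e,k_2\rangle_\rho$, are bounded pointwise through the $V_A(s)$ definitions and all stay $\le C/s^{2+\nu}$; the $1/s^2$ and $1/s^{a+1}$ remainders in the expansion of $V$ contribute at the same scale. Combined with $|R_2|\le C/s^{2+\nu}$ and $|B_2|+|N_2|\le CA^4/s^{2+2\nu}$ from Lemma~\ref{lemm:estonBRN}, this delivers~\eqref{equ:odeq2}.

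The main obstacle is the $m=2$ analysis: it depends on the constant cancellation described above and, more importantly, on the fact that the $h$-induced remainder in $V$ sits at order $1/s^{a+1}$ rather than $1/s^a$, a sharpening guaranteed by the multiplicative factor $\phi(s)/\kappa$ in~\eqref{def:varphi}. Without this refinement, the $1/s^a$ contamination would exceed the tight budget $1/s^{2+\nu}$ for small $a$ and the whole reduction scheme would collapse.
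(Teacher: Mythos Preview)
Your proposal is correct and follows essentially the same approach as the paper's proof: both differentiate $q_m(s)=\langle\chi q,k_m\rangle_\rho$, discard the $\partial_s\chi$ and commutator contributions as exponentially small via the Gaussian weight, use self-adjointness of $\mathcal{L}$ to extract $(1-\tfrac{m}{2})q_m$, invoke Lemma~\ref{lemm:estonBRN} for the $B$, $R$, $N$ projections, bound the $V$-term crudely for $m=0,1$, and for $m=2$ expand $V(y,s)=-h_2(y)/(4s)+\tilde V$ so that the identity $\int h_2^3\rho\,dy=64$ with $\|h_2\|_{L^2_\rho}^2=8$ isolates the drift $-\tfrac{2}{s}q_2$. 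One minor inaccuracy: your claimed remainder $O(1/s^{a+1})$ for the $h$-part of $V$ holds only in case~\eqref{equ:h1}; in case~\eqref{equ:h} the paper's Lemma~\ref{ap:lemmEstV}$(c)$ gives only $\tilde V=O((1+|y|^4)/s^{1+\bar a})$ with $\bar a=\min\{a-1,1\}$, but since there $a>1$ and $\nu\le a-1$, this weaker bound still lands inside $C/s^{2+\nu}$ after multiplying by $|q|\le CA^2(1+|y|^3)/s^{1+\nu}$ and integrating.
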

\begin{proof} The proof is very close to that in \cite{MZdm97}. We therefore give the sketch of the proof. By the definition \eqref{def:decomq}, we write
$$m = 0, 1, 2, \;\;\frac{d q_m(s)}{ds} = \int  \frac{\partial \chi(y,s)}{\partial s} q(s) k_m \rho dy + \int \chi(y,s) \frac{\partial q(s)}{\partial s} k_m \rho dy := I + II.$$
Since the support of $\frac{\partial \chi(y,s)}{\partial s}$ is the set $K\sqrt{s} \leq |y| \leq 2K\sqrt{s}$ (see \eqref{def:chi}), using the fact that $\|q(s)\|_{L^\infty} \leq \frac{C A^2}{s^\varrho}$ (see \eqref{iq:VAbe}), we obtain
$$|I| \leq \int \left|\frac{\partial \chi(y,s)}{\partial s}\right| |q(s)| |k_m| \rho dy \leq CA^2e^{-s}s^{-\varrho},$$
for $K$ large enough.\\
For $II$, we have by equation \eqref{equ:q},
\begin{align*}
II &= \int \chi(y,s) \mL q(s)k_m \rho dy + \int \chi(y,s) V(s)q(s)k_m \rho dy\\
&+ \int \chi(y,s) \left[B(q(s)) + R(s) + N(q(s),s)\right]k_m \rho dy := IIa + IIb + IIc.
\end{align*}
Since $\mL$ is self-adjoint on $L^2_\rho$ and $\mL(\chi(y,s)k_m) = (1 - \frac{m}{2})\chi(y,s) k_m + \frac{\partial ^2\chi(y,s)}{\partial s^2}k_m + \frac{\partial \chi(y,s)}{\partial s}(2 \frac{\partial k_m}{dy} - \frac{y}{2}k_m)$, we obtain 
$$IIa = \int \mL(\chi(y,s) k_m)q(s)\rho dy = (1 - \frac{m}{2})q_m(s) + \mathcal{O}(CA^2e^{-s}),$$
where $\mathcal{O}(r)$ stands for quantity whose absolute value is bounded precisely by $r$ and not $Cr$. \\
Recalling from part $c)$ of Lemma \ref{ap:lemmEstV} that $|V(y,s)| \leq \frac{C}{s}(1 + |y|^2)$ and from \eqref{iq:VAbe} that $|q(y,s)| \leq \frac{CA^2}{s^{1 + \nu}}(1 + |y|^3)$, we derive
$$m = 0, 1, \;\;|IIb| \leq \frac{CA^2}{s^{2 + \nu}}\int(1 + |y|^5)|k_m| \rho dy \leq \frac{CA^2}{s^{2 + \nu}}.$$
For $m = 2$, using the second estimate in part $c)$ of Lemma \ref{ap:lemmEstV}, namely that $V(y,s) = -\frac{h_2(y)}{4s} + \mathcal{O}\left(\frac{C(1+|y|^4)}{s^{1 + \bar{a}}}\right)$ with $\bar{a} = \min\{a-1,a\}$ in the case \eqref{equ:h} and $\bar{a} = \min\{a,1\}$ in the case \eqref{equ:h1}, simultaneously noting that $\int h_2^2 \rho dy = 8$, $\int h_2^3 \rho dy = 64$ and $2 + \bar{a} + \nu \geq 2 + 2\nu$, we obtain
$$m = 2, \;\; IIb = -\frac{2}{s}\;q_2(s) + \mathcal{O}\left(\frac{CA^2}{s^{2 + 2\nu}}\right).$$
The bound for $IIc$ already obtained from \eqref{est:alltermsofB}, \eqref{equ:estalltermsonR} and \eqref{equ:estalltermsofN}. Adding all these bounds and taking $\sigma_6(A)$ large enough such that for all $s \geq \sigma_6(A)$, $A^4s^{-\nu} + A^2s^{2 + \nu}e^{-s} \leq 1$, we then have the conclusion. This ends the proof of Lemma \ref{lemm:eqm12}.
\end{proof}
\subsubsection*{Step 2: Deriving conclusion $i)$ of Proposition \ref{prop:redu}}
Here we use Lemma \ref{lemm:estallterms} in order to derive conclusion $i)$ of Proposition \ref{prop:redu}. Indeed, from equation \eqref{for:qint} and Lemma \ref{lemm:estallterms}, we derive new bounds on $|q_2(s)|$,  $\left\|\frac{q_-(y,s)}{1 + |y|^3} \right\|_{L^\infty}$ and $\|q_e(s)\|_{L^\infty}$, assuming that for all $s \in [\sigma, \sigma +\lambda]$, $q(s) \in V_A(s)$, for $\lambda \leq \lambda^*$ and $\sigma \geq s_0 \geq \sigma_1(A,\lambda^*)$ ($\sigma_1$ is given in Lemma \ref{lemm:estallterms}). The key estimate is to show that for $s = \sigma + \lambda$ (or $s \in [\sigma,\sigma +\lambda]$ if $\sigma = s_0$), these bounds are better than those defining $V_A(s)$, provided that $\lambda \leq \lambda^*(A)$. More precisely, we claim that following proposition which directly follows $i)$ of Proposition \ref{prop:redu}:
\begin{prop}[\textbf{Control of $q(s)$ by $(q_0,q_1)(s)$ in $V_A(s)$}] \label{prop:redu1} There exist $A_4 > 1$ such that for each $A \geq A_4$, there exists $\delta_4(A) > 0$ such that for each $s_0 \geq \delta_4(A)$, we have the following properties:\\
- if $(d_0, d_1)$ is chosen so that $(q_0,q_1)(s_0) \in \hat{V}_A(s_0)$, and \\
- if for all $s \in [s_0,s_1]$, $q(s) \in V_A(s)$ for some $s_1 \geq s_0$, then: for all $s \in [s_0,s_1]$,
\begin{equation}\label{equ:redu1}
|q_2(s)| < \frac{A^2}{s^{1 + \nu}},\quad \left\|\frac{q_-(y,s)}{1 + |y|^3} \right\|_{L^\infty}\leq \frac{A}{2s^{3/2 + \varrho}}, \quad \|q_e(s)\|_{L^\infty} \leq \frac{A^2}{2s^\varrho}.
\end{equation}
\end{prop}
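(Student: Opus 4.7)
The aim is to upgrade the three non-strict bounds in the definition of $V_A(s)$ (on $q_2$, $q_-$ and $q_e$) into the strict bounds \eqref{equ:redu1}; the two modes $(q_0,q_1)$ are the only coordinates that may reach the boundary. I would treat $q_2$ via the ODE of Lemma~\ref{lemm:eqm12} and $(q_-,q_e)$ via the integral representation \eqref{for:qint} combined with Lemma~\ref{lemm:estallterms}, evaluated at a carefully chosen retarded time.

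For $q_2$, the ODE $|q_2'(s)+\tfrac{2}{s}q_2(s)|\leq C/s^{2+\nu}$ rewrites as $(s^2 q_2(s))' = \mathcal{O}(s^{-\nu})$. Since $\nu\leq 1/2 < 1$, integration on $[s_0,s]$ combined with the initial bound $|q_2(s_0)|\leq C/s_0^2$ from Lemma~\ref{lemm:decomdata}(ii) yields $|q_2(s)|\leq C/s^{1+\nu}$; the constant $C$ here is independent of $A$, so choosing $A^2\geq 2C$ already secures the strict inequality $|q_2(s)| < A^2/s^{1+\nu}$.

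For $q_-$ and $q_e$, fix $s\in[s_0,s_1]$ and set $\sigma = \max\{s_0,\,s-\lambda^*\}$ with $\lambda^*>0$ large to be chosen. Writing $q(s) = \alpha(s)+\beta(s)$ in the notation of Lemma~\ref{lemm:estallterms} and summing the linear and source contributions, one obtains
\begin{align*}
\left\|\frac{q_-(y,s)}{1+|y|^3}\right\|_{L^\infty} &\leq \frac{C}{s^{3/2+\varrho}}\Big(2 + A e^{-\lambda^*/2} + A^2 e^{-(\lambda^*)^2}\Big), \\
\|q_e(s)\|_{L^\infty} &\leq \frac{C}{s^\varrho}\Big(2 + A e^{\lambda^*} + A^2 e^{-\lambda^*/p}\Big),
\end{align*}
the range $s-s_0\leq \lambda^*$ being covered by the enhanced initial-time bounds of Lemma~\ref{lemm:estallterms}(i), which exploit $(q_0,q_1)(s_0)\in\hat{V}_A(s_0)$. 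To beat the targets $A/(2s^{3/2+\varrho})$ and $A^2/(2s^\varrho)$, I would select constants in the order: first $\lambda^*$ large enough that $Ce^{-\lambda^*/p}\leq 1/4$, then $A$ with $A\geq 8Ce^{\lambda^*}$ and $A^2\geq 16C$, and finally enlarge $\lambda^*$ once more so that $CAe^{-(\lambda^*)^2}\leq 1/8$, which is feasible since $e^{-(\lambda^*)^2}$ crushes any power of $A$; the threshold $s_0\geq \delta_4(A)$ then absorbs the remaining lower-order errors from Lemma~\ref{lemm:estonBRN}.

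The main obstacle is the expanding factor $Ae^{s-\sigma}$ in the bound on $\|\alpha_e(s)\|_{L^\infty}$, which originates from the eigenvalue $+1$ of $\mathcal{L}$ leaking into the outer region. It forbids taking $\sigma$ all the way down to $s_0$ once $s-s_0$ is large and thus forces the retarded-time scheme with $\lambda^*$ kept bounded; once $\lambda^*$ is frozen, $A$ must be taken large enough to dominate $e^{\lambda^*}$. The $q_-$ estimate, by contrast, tolerates both orderings of the quantifiers thanks to the super-exponential factor $e^{-(\lambda^*)^2}$, so all three strict inequalities can be closed with a single compatible choice of $(A,\lambda^*,s_0)$.
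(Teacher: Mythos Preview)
Your overall strategy coincides with the paper's: the ODE of Lemma~\ref{lemm:eqm12} handles $q_2$, and Lemma~\ref{lemm:estallterms} applied at a retarded time $\sigma$ handles $q_-$ and $q_e$. Your treatment of $q_2$ by direct integration of $(s^2 q_2)'=\mathcal{O}(s^{-\nu})$ is a clean variant of the paper's contradiction argument and is correct, since the constant $C$ in \eqref{equ:odeq2} is genuinely independent of $A$ once $s_0\geq\sigma_6(A)$.

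The gap is in your selection of $(\lambda^*,A)$. You pick $\lambda^*$ first so that $Ce^{-\lambda^*/p}\leq 1/4$, then impose $A\geq 8Ce^{\lambda^*}$, and \emph{then} enlarge $\lambda^*$ again to force $CAe^{-(\lambda^*)^2}\leq 1/8$. After that last enlargement the constraint $A\geq 8Ce^{\lambda^*}$ can fail, and since the proposition must hold for \emph{every} $A\geq A_4$, you are not free to readjust $A$. Concretely, with $\lambda^*$ fixed independently of $A$ the $q_-$ term $CA^2e^{-(\lambda^*)^2}$ in your display can never be bounded by $A/2$ for all large $A$. The paper resolves this by letting the lag depend on $A$: with $\lambda_2=\log(A/8C)$ one gets simultaneously
\[
CAe^{\lambda_2}=\tfrac{A^2}{8},\qquad Ce^{-\lambda_2/p}=C\Big(\tfrac{8C}{A}\Big)^{1/p}\to 0,\qquad CA^2e^{-\lambda_2^2}=CA^2e^{-(\log(A/8C))^2}\to 0,
\]
so that all constraints for $q_-$ and $q_e$ reduce to ``$A$ large enough''. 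The paper also uses a second, slightly larger lag $\lambda_1=\tfrac{3}{2}\log A\geq\lambda_2$ for the short-time regime $s-s_0\leq\lambda_1$, where the sharper initial-data bounds of Lemma~\ref{lemm:estallterms}(i) (with $\sigma=s_0$) give $\|\alpha_e(s)\|_{L^\infty}\leq Ce^{\lambda_1}/\sqrt{s}$, absorbed by taking $s_0$ large. Replacing your fixed $\lambda^*$ by a lag of order $\log A$ closes the argument.
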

\noindent Let us now derive the conclusion $i)$ of Proposition \ref{prop:redu} from Proposition \ref{prop:redu1}, and we then prove it later. 
\begin{proof}[\textbf{Proof of $i)$ of Proposition \ref{prop:redu}}] Indeed, if $q(s_1) \in \partial V_A(s_1)$, we see from \eqref{equ:redu1} and the definition of $V_A(s)$ given in Proposition \ref{prop:1} that the first two components of $q(s_1)$ must be in $\partial \hat{V}_A(s_1)$, which is the conclusion of part $i)$ of Proposition \ref{prop:redu}, assuming Proposition \ref{prop:redu1} holds.
\end{proof}

\noindent We now give the proof of Proposition \ref{prop:redu1} in order to conclude the proof of part $i)$ of Proposition \ref{prop:redu}.
\begin{proof}[\textbf{Proof of Proposition \ref{prop:redu1}}] Note that the conclusion of this proposition is very similar to Proposition 3.7, pages 157 in \cite{MZdm97}. However, its proof is far from being an adaptation of the proof given in the case of the semilinear heat equation treated in \cite{MZdm97} because of the difference of the definition of $V_A(s)$ and the presence of the strong perturbation term. In fact, the argument given in \cite{MZdm97} does not work here to control $|q_2(s)|$ in this new situation, we use instead equation \eqref{equ:odeq2} to handle this term. \\

\noindent Let $\lambda_1 \geq \lambda_2$ be two positive numbers which will be fixed in term of $A$ later. It is enough to show that \eqref{equ:redu1} holds in two cases: $s - s_0 \leq \lambda_1$ and $s -s_0 \geq \lambda_2$. In both cases, we use Lemma \ref{lemm:estallterms} and equation and suppose $A \geq A_2 > 0$, $s_0 \geq \max\{\sigma_2(A,\lambda_1), \sigma_2(A,\lambda_2), \sigma_6(A), 1\}$. \\

\noindent \textbf{Case $s - s_0 \leq \lambda_1$}: Since we have for all $\tau \in [s_0,s]$, $q(\tau) \in V_A(\tau)$, we apply Lemma \ref{lemm:estallterms} with $A$ and $\lambda^* = \lambda_1$, and $\lambda = s - s_0$. From \eqref{for:qint} and Lemma \ref{lemm:estallterms}, we have 
$$|q_2(s)| \leq \frac{C}{s^2} + \frac{C\lambda_1}{s^{2 + \nu}}, \quad \quad \left\|\frac{q_-(y,s)}{1 + |y|^3} \right\|_{L^\infty}\leq \frac{C}{s^{3/2 + \varrho}}, \quad \|q_e(s)\|_{L^\infty} \leq \frac{Ce^{\lambda_1}}{\sqrt{s}} + \frac{C}{s^{\varrho}}.$$
If we fix $\lambda_1 = \frac{3}{2}\log A$ and $A$ large enough, then \eqref{equ:redu1} satisfies. \\

\noindent \textbf{Case $s- s_0 \geq \lambda_2$}: Since we have for all $\tau \in [\sigma, s]$, $q(\tau) \in V_A(\tau)$, we apply Lemma \ref{lemm:estallterms} with $A$, $\lambda = \lambda^* = \lambda_2$, $\sigma = s - \lambda_2$. From \eqref{for:qint} and Lemma \ref{lemm:estallterms}, we have 
\begin{align*}
\left\|\frac{q_-(y,s)}{1 + |y|^3} \right\|_{L^\infty} &\leq \frac{C}{s^{3/2 + \varrho}}\left(1 + A e^{-\frac{\lambda_2}{2}} + A^2e^{-\lambda_2^2}\right),\\
\|q_e(s)\|_{L^\infty} & \leq \frac{C}{s^\varrho}\left(1 + Ae^{\lambda_2} + A^2e^{-\frac{\lambda_2}{p}}\right).
\end{align*}
To obtain \eqref{equ:redu1}, except for $|q_2(s)|$ which will be treated later, it is enough to have $A \geq 4C$ and
\begin{align*}
& C\left(A e^{-\frac{\lambda_2}{2}} + A^2e^{-\lambda_2^2}\right) \leq \frac{A}{4},\\
& C\left(Ae^{\lambda_2} + A^2e^{-\frac{\lambda_2}{p}}\right) \leq \frac{A^2}{4}.
\end{align*}
If we fix $\lambda_2 = \log(A/8C)$ and take $A$ large enough, we see that these requests are satisfied. This follows the last two estimates in \eqref{equ:redu1}.\\

\noindent It remains to show that if $q(s) \in V_A(s)$ for all $s \in [s_0, s_1]$ then $|q_2(s)| < \frac{A^2}{s^{1 + \nu}}$ for all $s \in [s_0, s_1]$. We proceed by contradiction, assume that for all $s \in [s_0,s_*)$, $|q_2(s)| < \frac{A^2}{s^{1 + \nu}}$ and $|q_2(s_*)| = \frac{A^2}{s_*^{1 + \nu}}$. Considering the case $q_2(s_*) = -\frac{A^2}{s_*^{1 + \nu}}$, we have
\begin{equation}\label{equ:controlq2co}
q_2'(s_*) \leq \frac{d}{ds}\left( \frac{-A^2}{s_*^{1 + \nu}}\right) \leq \frac{(1 + \nu)A^2}{s_*^{2 + \nu}}.
\end{equation}
On the other hand, we have from \eqref{equ:odeq2},
$$q_2'(s_*) \geq -\frac 2s \;q_2(s_*) - \frac{C}{s_*^{2 + \nu}} = \frac{2A^2 -C}{s_*^{2 + \nu}},$$
which contradicts \eqref{equ:controlq2co} if we take $A$ large enough. \\
Using the same argument in the case where $q_2(s_*) = \frac{A^2}{s_*^{1 + \nu}}$, we also have a contradiction. This completes the proof of Proposition \ref{prop:redu1} and part $i)$ of Proposition \ref{prop:redu} too.
\end{proof}

\subsubsection*{Step 3: Deriving conclusion $ii)$ of Proposition \ref{prop:redu}.}
We prove part $ii)$ of Proposition \ref{prop:redu} here. In order to prove this, we follow the ideas of \cite{MZdm97} to show that for each $m \in \{0,1\}$ and each $\iota \in \{-1,1\}$, if $q_m(s_1) = \frac{\iota A}{s_1^{3/2 + \nu}}$, then $\frac{dq_m}{ds}(s_1)$ has the opposite sign of $\frac{d}{ds}\left(\frac{\iota A}{s^{3/2+\nu}}\right)(s_1)$ so that $(q_0,q_1)(s_1)$ actually leaves $\hat{V}_A$ at $s_1$ for $s_1 \geq s_0$ where $s_0$ will be large enough. Indeed, from equation \eqref{equ:eqm12}, we take $A = 2C + 1$. If $\iota = 1$, then $\frac{d q_m}{ds}(s_1) > 0$ and if $\iota = -1$, then $\frac{d q_m}{ds}(s_1) < 0$. This implies that $(q_0,q_1)(s_1 + \eta) \not\in \partial \hat{V}_A(s_1 + \eta)$ which yields conclusion $ii)$ of Proposition \ref{prop:redu}.

\subsection{Local in time solution of equation \eqref{equ:q}}
In the following, we find a local in time solution for equation \eqref{equ:q}.
\begin{prop}[\textbf{Local in time solution of equation \eqref{equ:q}}] \label{prop:localex} For all $A > 1$, there exists $\delta_5(A)$ such that for all $s_0 \geq \delta_5(A)$, the following holds: For all $(d_0,d_1) \in \mathcal{D}_{s_0}$, there exists $s_{max}(d_0,d_1) > s_0$ such that equation \eqref{equ:q} with initial data $q(s_0)$ given in \eqref{def:intq0} has a unique solution satisfying $q(s) \in V_{A+1}(s)$ for all $s \in [s_0,s_{max})$.
\end{prop}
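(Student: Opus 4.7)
\subsection*{Proof plan for Proposition \ref{prop:localex}}

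The plan is to transfer the standard $L^\infty$ Cauchy theory for the original equation \eqref{equ:problem} to equation \eqref{equ:q} via the similarity variables \eqref{equ:simivariables}, and then use a continuity argument together with the strict inequalities proved in part $ii)$ of Lemma \ref{lemm:decomdata} to ensure that the resulting solution actually stays in $V_{A+1}(s)$ for a short time.

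First, I would unravel the change of variables. Given $(d_0,d_1) \in \mathcal{D}_{s_0}$, the function $q_{d_0,d_1}(\cdot,s_0)$ defined in \eqref{def:intq0} belongs to $W^{1,\infty}(\mathbb{R})$ (with the bounds already recorded in $i)$ of Lemma \ref{lemm:decomdata}), so $w(\cdot,s_0)=\varphi(\cdot,s_0)+q(\cdot,s_0)\in L^\infty(\mathbb{R})$ and the corresponding $u(\cdot,0)$ obtained by inverting \eqref{equ:simivariables} is a bounded initial datum. Under the hypothesis \eqref{equ:h} or \eqref{equ:h1}, $h$ is $\mathcal{C}^1$ with polynomial bounds and the nonlinearity $|u|^{p-1}u+h(u)$ is locally Lipschitz on $L^\infty$; standard semigroup theory therefore produces a unique maximal $L^\infty$-solution $u$ of \eqref{equ:problem} on $[0,T_{\max})$ for some $T_{\max}>0$. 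Pulling back through \eqref{equ:simivariables}, this yields a unique continuous curve $s\mapsto w(\cdot,s)\in L^\infty(\mathbb{R})$ on $[s_0,s_{\max})$ with $s_{\max}=-\log(T-T_{\max})>s_0$, and hence $q(\cdot,s)=w(\cdot,s)-\varphi(\cdot,s)$ is a continuous $L^\infty$-solution of \eqref{equ:q} on the same interval.

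Next, I would upgrade this $L^\infty$ continuity to continuity of each component appearing in the definition of $V_A(s)$. The projections $q_m(s)$ for $m=0,1,2$ are defined as integrals of $q_b(y,s)=\chi(y,s)q(y,s)$ against $k_m(y)\rho(y)$; since $\chi\rho k_m$ is smooth with compact $y$-support for fixed $s$ and continuous in $s$, the maps $s\mapsto q_m(s)$ are continuous. The exterior part $q_e(s)=(1-\chi(y,s))q(y,s)$ depends continuously on $s$ in $L^\infty(\mathbb{R})$ by the $L^\infty$ continuity of $q(s)$ and of $\chi(\cdot,s)$. For the weighted norm $\bigl\|q_-(\cdot,s)/(1+|y|^3)\bigr\|_{L^\infty}$, continuity follows from the $L^\infty$ continuity of $q_b(s)$ combined with Proposition \ref{prop:regW1in} (applied to the family of equations parametrized by the full decomposition): in fact, because $q_b(s)$ has support in $|y|\le 2K\sqrt{s}$, the polynomial weight $(1+|y|^3)^{-1}$ only inflates estimates by a factor depending continuously on $s$. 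Thus all the quantities controlled by $V_A$ are continuous functions of $s$.

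Finally, I invoke part $ii)$ of Lemma \ref{lemm:decomdata}: for $s_0\ge\delta_5(A)$ sufficiently large (say $\delta_5(A)\ge\delta_1(A)$), the inclusion $q(s_0)\in V_A(s_0)$ is strict in every component except possibly $(q_0,q_1)(s_0)$, for which we only have $|q_m(s_0)|\le A/s_0^{1+\nu}$. Since $A<A+1$, however, these endpoint bounds become strict with room to spare when the threshold is replaced by $(A+1)/s^{1+\nu}$, and the same is true for the other components. By the continuity established above, each defining inequality of $V_{A+1}(s)$ remains strictly satisfied on some right-neighborhood $[s_0,s_0+\eta)\subset[s_0,s_{\max})$, yielding a nonempty $s_{\max}(d_0,d_1)>s_0$ with $q(s)\in V_{A+1}(s)$ on $[s_0,s_{\max})$. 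The uniqueness statement is inherited from the uniqueness of the $L^\infty$-Cauchy problem for \eqref{equ:problem}. The only mildly delicate point, and the main obstacle to a completely routine proof, is checking continuity of the weighted norm on $q_-$; I would handle it by noticing that after the decomposition \eqref{def:decomq}, $q_-(s)$ is the $L^2_\rho$-orthogonal remainder of a compactly supported continuous family $q_b(s)$ after subtracting three polynomial modes whose coefficients are themselves continuous in $s$, which forces continuity in the weighted $L^\infty$ norm through a Hermite expansion bound.
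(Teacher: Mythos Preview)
Your proposal is correct and follows essentially the same approach as the paper: reduce to the $L^\infty$ Cauchy theory for equation \eqref{equ:problem} via the similarity variables \eqref{equ:simivariables}, then use $q(s_0)\in V_A(s_0)\subset V_{A+1}(s_0)$ (from Lemma \ref{lemm:decomdata}) together with continuity in $s$ to get short-time membership in $V_{A+1}(s)$. Your treatment is in fact more detailed than the paper's, which omits the continuity discussion entirely; the passing appeal to Proposition \ref{prop:regW1in} for the $q_-$ component is unnecessary and mildly circular, but you immediately supply the correct argument based on the compact support of $q_b$.
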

\begin{proof} Using the definition  \eqref{def:varphi} of $q$ and the equivalent formulation \eqref{equ:simivariables}, we see that the Cauchy problem of \eqref{equ:q} is equivalent to the Cauchy problem of equation \eqref{equ:problem}. Note that the initial data for \eqref{equ:problem} is derived the initial data for \eqref{equ:q} at $s=s_0$ given in \eqref{def:intq0}, namely
$$u_{d_0,d_1}(x) = \frac{T^{-\frac{1}{p-1}}\phi(-\log T)}{\kappa}\left\{f(z)\left(1 + \frac{d_0 + d_1 z}{p-1 + \frac{(p-1)^2}{4p}z^2}\right) \right\},$$
where $f$ is defined in \eqref{def:f}, $T = e^{-s_0}$ and $z = \frac{x}{\sqrt{T|\log T|}}$.\\
This initial data is belong to $L^\infty(\R)$ which insures the local existence of $u$ in $L^\infty(\R)$ (see the introduction). From part $iii)$ of Lemma \ref{lemm:decomdata}, we have $q_{d_0,d_1}(s_0) \in V_A(s_0) \subseteq V_{A + 1}(s_0)$. Then there exists $s_{max}$ such that for all $s \in [s_0,s_{max})$, we have $q(s) \in V_{A+1}(s)$. This concludes the proof of Proposition \ref{prop:localex}.
\end{proof}
\subsection{Deriving conclusion $ii)$ of Theorem \ref{theo:1}}
In this subsection, we derive conclusion $ii)$ of Theorem \ref{theo:1} using the previous subsections. Although the derivation of the conclusion is the same as in \cite{MZdm97}, we would like to give details of its proof for the reader's convenience and for explaining the two-point strategy: reduction to a finite dimensional problem and the conclusion $ii)$ of Theorem \ref{theo:1} using index theory.
\begin{proof}[\textbf{Proof of $ii)$ of Theorem \ref{theo:1}}] We first solve the finite-dimensional problem and show the existence of $A > 1$, $s_0 > 0$ and $(d_0,d_1) \in \\mathcal{D}_{s_0}$ such that problem \eqref{equ:q} with initial data at $s = s_0$, $q_{d_0,d_1}(s_0)$ is given in \eqref{def:intq0} has a solution $q(s)$ defined for all $s \in [s_0,+\infty)$ such that 
\begin{equation}\label{equ:conii}
q(s) \in V_A(s), \quad \forall s \in [s_0,+\infty).
\end{equation}
For this purpose, let us take $A \geq A_1$ and $s_0 \geq \delta_3$, where $A_1$ and $\delta_3$ are given in Proposition \ref{prop:redu}, we will find the parameter $(d_0,d_1)$ in the set $\mathcal{D}_{s_0}$ defined in Lemma \ref{lemm:decomdata} such that \eqref{equ:conii} holds. We proceed by contradiction and assume from $iii)$ of Lemma \ref{lemm:decomdata} that for all $(d_0,d_1) \in \mathcal{D}_{s_0}$, there exists $s_*(d_0,d_1) \geq s_0$ such that $q_{d_0,d_1}(s) \in V_A(s)$ for all $s \in [s_0,s_*]$ and $q_{d_0,d_1}(s_*) \in \partial V_A(s_*)$. Applying Proposition \ref{prop:redu}, we see that $q_{d_0,d_1}(s_*)$ can leave $V_A(s_*)$ only by its first two components, hence,
$$(q_0,q_1)(s_*) \in \partial \hat{V}_A(s_*),$$
(see Proposition \ref{prop:1} for the definition of $\hat{V}_A$). Therefore, we can define the following function: 
\begin{align*}
\Phi \;: \mathcal{D}_{s_0} &\mapsto \partial ([-1,1]^2)\\
(d_0,d_1) &\to \frac{s_*^{1 + \nu}}{A}(q_0,q_1)(s_*).
\end{align*}
Since $q(y,s_0)$ is continuous in $(d_0,d_1)$ (see Lemma \ref{lemm:decomdata}), we have that $(q_0,q_1)(s)$ is continuous with respect to $(d_0,d_1,s)$. Then using the transversality property of $(q_0,q_1)$ on $\partial \hat{V}_A$ (part $ii)$ of Proposition \ref{prop:redu}), we claim that $s_*(d_0,d_1)$ is continuous. Therefore, $\Phi$ is continuous.\\
If we manage to prove that $\Phi$ is of degree one on the boundary, then we have a contradiction from the degree theory. Let us prove that. From Lemma \ref{lemm:decomdata}, we see that if $(d_0,d_1)$ is on the boundary of $\mathcal{D}_{s_0}$, then 
$$(q_0,q_1)(s_0) \in \partial \hat{V}_A(s_0), \quad \text{and} \quad q(s_0) \in V_A(s_0),$$
where the statement $q(s) \in V_A(s)$ holds with strict inequalities for $q_2, q_-$ and $q_e$. Using again $ii)$ of Proposition \ref{prop:redu}, we see that $q(s)$ can leave $V_A(s)$ at $s = s_0$, hence $s_* = s_0$. Using $iii)$ of Lemma \ref{lemm:decomdata}, we have that the restriction of $\Phi$ to the boundary is of degree $1$. This gives us a contradiction (by the index theory). Thus, there exists $(d_0,d_1) \in \mathcal{D}_{s_0}$ such that for all $s \geq s_0$, $q_{d_0,d_1}(s) \in V_A(s)$, which is the conclusion \eqref{equ:conii}.\\
Since $q_{d_0,d_1}(s)$ satisfies \eqref{equ:conii}, we use the parabolic estimate in Proposition \ref{prop:regW1in}, the transformations \eqref{def:varphi} and \eqref{equ:simivariables} and the fact that $\frac{\phi(s)}{\kappa} = 1 + \mathcal{O}(s^{-a})$ with $a > 0$ to derive estimate \eqref{est:th1}. This concludes the proof of $ii)$ of Theorem \ref{theo:1}. 
\end{proof}

\subsection{Deriving conclusion $iii)$ of Theorem \ref{theo:1}}
We give the proof of part $iii)$ of Theorem \ref{theo:1} in this subsection. We consider $u(t)$ solution of equation \eqref{equ:problem} which blows-up in finite-time $T > 0$ at only one blow-up point $x = 0$ and satisfies \eqref{est:th1}. Adapting the techniques used by Merle in \cite{Mercpam92} to equation \eqref{equ:problem} without the perturbation ($h \equiv 0$), we show the existence of a profile $u_* \in \mathcal{C}(\R \setminus \{0\}, \R)$ such that $u(x,t) \to u_*(x)$ as $t \to T$ uniformly on compact subsets of $\R \setminus \{0\}$, where $u_*$ is given in $iii)$ of Theorem \ref{theo:1}. Note that Zaag \cite{ZAAihn98}, Masmoudi and Zaag \cite{MZjfa08} successfully used these techniques to equation \eqref{equ:comGin}.  Since the proof is very similar to that written in \cite{ZAAihn98} and \cite{MZjfa08}, and no new idea is needed, we just give the key argument and kindly refer the reader to see Section 4 in \cite{ZAAihn98} for details.\\

\noindent For each $x_0 \in \R\setminus\{0\}$ small enough, we define  for all $(\xi,\tau) \in \R\times \left[-\frac{t(x_0)}{T-t(x_0)},1\right)$ the following function:
\begin{equation}\label{def:vres}
v(x_0,\xi,\tau) = (T -t(x_0))^\frac{1}{p-1}u(x_0+\xi\sqrt{T-t(x_0)}, t(x_0) + (T-t(x_0))\tau),
\end{equation}
where $t(x_0)$ is uniquely defined by 
\begin{equation}\label{def:tx0}
|x_0| = K_0\sqrt{(T - t(x_0))|\log (T - t(x_0))|},
\end{equation}
with $K_0 > 0$ to be fixed large enough later.\\
Note that $v$ blows up at time $\tau = 1$ at only one blow-up point $x_0 = 0$. From \eqref{equ:problem} and \eqref{def:vres}, we see that $v(x_0,\xi,\tau)$ satisfies the following equation: for all $\tau \in \left[-\frac{t(x_0)}{T-t(x_0)},1\right)$,
\begin{equation}\label{equ:vres}
\frac{\partial v}{\partial \tau} = \Delta_\xi v + |v|^{p-1}v + (T - t(x_0))^{\frac{p}{p-1}}h\left((T-t(x_0))^{-\frac{1}{p-1}}v\right).
\end{equation}
From estimate \eqref{est:th1}, the definition \eqref{def:vres} of $v$ and \eqref{def:tx0}, we have the following:
$$
\sup_{|\xi| < |\log (T - t(x_0))|^\frac{\varrho}{2}} |v(x_0,\xi,0) - f(K_0)| \leq \frac{C}{|\log(T-t(x_0))|^\frac{\varrho}{2}} \to 0 \quad \text{as} \;\; x_0 \to 0,
$$
where $f$ is given in \eqref{def:f}.\\
Using the continuity with respect to initial data for equation \eqref{equ:problem} (also for equation \eqref{equ:vres}) associated to a space-localization in the ball $B(0,|\xi| < |\log (T - t(x_0))|^\frac{\varrho}{2})$, it is showed in Section 4 of \cite{ZAAihn98} that 
$$
\sup_{|\xi| < |\log (T - t(x_0))|^\frac{\varrho}{2}, 0 \leq \tau < 1} |v(x_0,\xi,\tau) - \hat{f}_{K_0}(\tau)| \leq \epsilon(x_0) \to 0 \quad \text{as} \;\; x_0 \to 0,
$$
where $\hat{f}_{K_0}(\tau) = \kappa(1 - \tau + \frac{p-1}{4p}K_0^2)^{-\frac{1}{p-1}}$.\\
Then letting $\tau \to 1$ and using the definition \eqref{def:vres} of $v$, we have
\begin{align*}
u_*(x_0) = \lim_{t' \to T}u(x,t') &= (T - t(x_0))^{-\frac{1}{p-1}}\lim_{\tau \to 1}v(x_0,0,\tau)\\
&\sim (T - t(x_0))^{-\frac{1}{p-1}} \hat{f}_{K_0}(1), \quad \text{as}\;\; x_0 \to 0.
\end{align*}
From \eqref{def:tx0}, we have 
$$(T - t(x_0))^{-\frac{1}{p-1}} \sim \left( \frac{|x_0|^2}{2K_0^2|\log x_0|}\right)^{-\frac{1}{p-1}}, \quad \text{as}\;\; x_0 \to 0.$$
Hence,
$$u_*(x_0) \sim \left(\frac{8p |\log x_0|}{(p-1)^2|x_0|^2} \right)^\frac{1}{p-1}, \quad \text{as}\;\; x_0 \to 0,$$
which concludes the proof of part $iii)$ of Theorem \ref{theo:1}.

\appendix
\renewcommand*{\thesection}{\Alph{section}}
\counterwithin{theo}{section}
\section{Appendix A}
We claim the following:
\begin{lemm} \label{ap:lemmA2} Let $\varepsilon \in (0,p]$, there exist $C = C(a,p,\mu, M) > 0$ and $s_0 = s_0(a,\varepsilon) > 0$ such that for all $s \geq s_0$,\\
$i)$ if $h$ is given by \eqref{equ:h},
\begin{equation*}
j =0, 1,\;\; e^{-\frac{(p-j)s}{p-1}}\left|h^{(j)}\left(e^\frac{s}{p-1}w\right) \right| \leq  Cs^{-a}\left(|w|^{p-j} +1\right),
\end{equation*}
$ii)$ if $h$ is given by \eqref{equ:h1},
\begin{equation*}
\sum_{j = 0}^3 e^{-\frac{(p-j)s}{p-1}}|w|^j\left|h^{(j)}\left(e^\frac{s}{p-1}w\right) \right| \leq  Cs^{-a}(|w|^p + |w|^{p-\varepsilon}).
\end{equation*}
\end{lemm}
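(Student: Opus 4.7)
The plan is to reduce both bounds to the control of $|w|^{p-j}/\log^a(2+e^{2s/(p-1)}w^2)$ via a case split on the size of $|w|$. For part $i)$, apply \eqref{equ:h} with $z=e^{s/(p-1)}w$ and multiply by $e^{-(p-j)s/(p-1)}$ to obtain
\[
e^{-(p-j)s/(p-1)}\bigl|h^{(j)}(e^{s/(p-1)}w)\bigr| \leq M\,\frac{|w|^{p-j}}{\log^a(2+e^{2s/(p-1)}w^2)} + M\,e^{-(p-j)s/(p-1)},
\]
where the residual $M e^{-(p-j)s/(p-1)}\leq Me^{-s}$ is swallowed into $Cs^{-a}$ for $s$ large. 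Split on $|w|$ relative to the threshold $s^{-a/(p-1)}$: if $|w|\geq s^{-a/(p-1)}$, then $e^{2s/(p-1)}w^2 \geq e^{2s/(p-1)}s^{-2a/(p-1)}\geq e^{s/(p-1)}$ for $s$ large, so the logarithm is at least $s/(p-1)$ and the first term is $\leq Cs^{-a}|w|^{p-j}$; if $|w|<s^{-a/(p-1)}<1$, then $|w|^{p-j}\leq |w|^{p-1}\leq s^{-a}$ (using $j\in\{0,1\}$ and $p-j\geq p-1>0$), and the trivial lower bound $\log(2+\cdot)\geq \log 2$ gives $Cs^{-a}$. Together these yield $Cs^{-a}(|w|^{p-j}+1)$.

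For part $ii)$, write $h(z)=\mu\,g(z)L(z)^{-a}$ with $g(z)=|z|^{p-1}z$ and $L(z)=\log(2+z^2)$. Leibniz' rule expands $h^{(j)}$ as a finite sum of products $g^{(k)}(z)(L^{-a})^{(j-k)}(z)$ for $0\leq k\leq j\leq 3$. A routine induction gives $|g^{(k)}(z)|\leq C|z|^{p-k}$ for $z\neq 0$, and differentiating $L^{-a}$ shows $|(L^{-a})^{(j-k)}(z)|\leq C L(z)^{-a}(1+|z|)^{-(j-k)}$, so that $|h^{(j)}(z)|\leq C|z|^{p-j}/L(z)^a$ for $z\neq 0$. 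Substituting $|z|=e^{s/(p-1)}|w|$ and regrouping yields
\[
e^{-(p-j)s/(p-1)}|w|^j\bigl|h^{(j)}(e^{s/(p-1)}w)\bigr| \leq C\,\frac{|w|^p}{\log^a(2+e^{2s/(p-1)}w^2)},
\]
the $|w|^j$ factor on the left precisely cancelling any singularity of $|z|^{p-j}$ at the origin.

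Finally, apply the analogous case split with threshold $s^{-a/\varepsilon}$: if $|w|\geq s^{-a/\varepsilon}$, the logarithm is $\geq s/(p-1)$ and the right-hand side is $\leq Cs^{-a}|w|^p$; if $|w|<s^{-a/\varepsilon}$, then $|w|^\varepsilon\leq s^{-a}$ and $|w|^p=|w|^\varepsilon|w|^{p-\varepsilon}\leq s^{-a}|w|^{p-\varepsilon}$, while $\log(2+\cdot)\geq\log 2$ again suffices. Summing over $j\in\{0,1,2,3\}$ completes the proof. The main technical obstacle is establishing the pointwise bound $|h^{(j)}(z)|\leq C|z|^{p-j}/L(z)^a$ uniformly in $z\neq 0$, including for small $|z|$ where $|z|^{p-j}$ may blow up; this is a tedious but elementary induction carried out separately for $z>0$ and $z<0$ and introduces no new ideas, all the work being in the case analysis on $|w|$ described above.
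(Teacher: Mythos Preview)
Your proof is correct and follows essentially the same strategy as the paper: both reduce the problem to bounding $|w|^\alpha/\log^a(2+e^{2s/(p-1)}w^2)$ via a case split on the size of $|w|$. The paper packages this as a single key inequality $\frac{|w|^\varepsilon}{\log^a(2+e^{2s/(p-1)}w^2)} \leq Cs^{-a}(|w|^\varepsilon+1)$, proved by splitting at $w^2 e^{s/(p-1)}=4$, from which both parts follow immediately; you instead carry out the split separately in each part with polynomial thresholds $s^{-a/(p-1)}$ and $s^{-a/\varepsilon}$, which amounts to the same argument.
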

\begin{proof} We see that the proof directly follows from the following key estimate: 
\begin{equation}\label{est:keyH}
\frac{|w|^\varepsilon}{\log^a\left(2 + e^\frac{2s}{p-1}w^2 \right)} \leq \frac{C}{s^a}(|w|^\varepsilon + 1), \quad\forall s \geq s_0(a,\varepsilon).
\end{equation}
Indeed, considering the case $w^2e^\frac{s}{p-1} \geq 4$, we have 
$$\frac{|w|^\varepsilon}{\log^a\left(2 + e^\frac{2s}{p-1}w^2 \right)} \leq \frac{|w|^\varepsilon}{\log^a\left(4e^\frac{s}{p-1}\right)} \leq \frac{C|w|^\varepsilon}{s^a},$$
then the case $w^2e^\frac{s}{p-1} \leq 4$ which follows that 
$$\frac{|w|^\varepsilon}{\log^a\left(2 + e^\frac{2s}{p-1}w^2 \right)} \leq \frac{|w|^\varepsilon}{\log^a(2)} \leq Ce^{-\frac{\varepsilon s}{p-1}} \leq Cs^{-a}.$$
This concludes the proof of \eqref{est:keyH} and the proof of Lemma 
\ref{ap:lemmA2} also.
\end{proof}
The following lemma shows the existence of solutions of the associated ODE of equation \eqref{equ:divw1}:
\begin{lemm} \label{ap:lemmA3} Let $\phi$ be a positive solution of the following ordinary differential equation:
\begin{equation}\label{ap:phiODE}
\phi_s = -\frac{\phi}{p-1} + \phi^p + e^{-\frac{ps}{p-1}}h\left(e^\frac{s}{p-1} \phi\right).
\end{equation}
Then $\phi(s) \to \kappa$ as $s \to +\infty$ and $\phi(s)$ is given by
\begin{equation}\label{ap:solphi}
\phi(s) = \kappa(1 + \eta_a(s))^{-\frac{1}{p-1}}, \quad \text{where} \quad \eta_a(s) = \mathcal{O}\left(\frac{1}{s^a}\right).
\end{equation}
If $h(x) = \mu\frac{|x|^{p-1}x}{\log^a(2 + x^2)}$, then
$$\eta_a(s) \sim C_0\int_s^{+\infty}\frac{e^{s-\tau}}{\tau^a}d\tau = \frac{C_0}{s^a}\left(1 + \sum_{j\geq 1}\frac{b_j}{s^j}\right),$$
where $C_0 = \mu\left(\frac{p-1}{2}\right)^a$ and $b_j = (-1)^j\prod_{i = 0}^{j-1}(a+i)$.
\end{lemm}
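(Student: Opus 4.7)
\medskip

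\textbf{Proof proposal for Lemma \ref{ap:lemmA3}.} The plan is to linearize the principal part of \eqref{ap:phiODE} via a well-chosen substitution, recast the problem as an integral equation whose decaying solution is selected by its boundary condition at $+\infty$, and finally read off the asymptotic expansion using Lemma \ref{ap:lemmA2} and integration by parts.

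\medskip

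\textbf{Step 1: Linearizing substitution.} Since $\kappa$ is the positive zero of $z \mapsto -z/(p-1) + z^p$, and since the ODE without perturbation is solved explicitly by the change $\psi = \phi^{1-p}$, I would set
\begin{equation*}
\phi(s) = \kappa (1+\eta(s))^{-\frac{1}{p-1}}, \qquad \text{equivalently} \quad \phi^{1-p} = (p-1)(1+\eta).
\end{equation*}
A direct computation (multiplying \eqref{ap:phiODE} by $(1-p)\phi^{-p}$ and using $\kappa^{1-p}=p-1$) transforms \eqref{ap:phiODE} into the scalar linear equation with nonlinear forcing
\begin{equation*}
\eta_s = \eta + g(s,\eta), \qquad g(s,\eta) := -\phi^{-p}\,e^{-\frac{ps}{p-1}}\,h\!\left(e^{\frac{s}{p-1}}\phi\right).
\end{equation*}
By Lemma \ref{ap:lemmA2} (applied to $w=\phi \sim \kappa$), one has $|g(s,\eta)| \le C s^{-a}$ uniformly for $\eta$ in a small neighborhood of $0$, and $g$ is Lipschitz in $\eta$ on this neighborhood with a constant bounded in $s$.

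\medskip

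\textbf{Step 2: Selection of the decaying solution.} The linear part $\eta_s = \eta$ has positive eigenvalue, hence the generic solution blows up exponentially. The unique solution for which $\eta(s)\to 0$ is obtained by integrating backwards from $+\infty$:
\begin{equation*}
\eta(s) = -\int_s^{+\infty} e^{s-\tau}\, g(\tau,\eta(\tau))\, d\tau.
\end{equation*}
I would set this as a fixed point problem $\eta = \mathcal{T}(\eta)$ on the Banach space
\begin{equation*}
X_{s_0} = \Big\{ \eta \in C([s_0,+\infty)) \;:\; \|\eta\|_{X_{s_0}} := \sup_{s\ge s_0} s^a |\eta(s)| < +\infty \Big\},
\end{equation*}
for $s_0$ large enough. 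Using $|g(s,\eta)|\le Cs^{-a}$ and the elementary bound $\int_s^\infty e^{s-\tau}\tau^{-a}d\tau \le C s^{-a}$ (for $s\ge 1$), the map $\mathcal{T}$ sends a small ball of $X_{s_0}$ into itself; Lipschitz-in-$\eta$ control of $g$ makes $\mathcal{T}$ a contraction. This produces a unique $\eta$ with $\eta(s)=\mathcal{O}(s^{-a})$, hence a positive solution $\phi$ of \eqref{ap:phiODE} with $\phi(s)\to\kappa$ and the stated form \eqref{ap:solphi}. Uniqueness among positive solutions converging to $\kappa$ follows by a Gronwall argument on the difference of two such solutions, using the exponential instability to force the ``boundary data at $+\infty$'' to match.

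\medskip

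\textbf{Step 3: Sharp asymptotic in the case \eqref{equ:h1}.} When $h(z)=\mu |z|^{p-1}z/\log^a(2+z^2)$, a direct substitution yields the exact identity
\begin{equation*}
g(s,\eta) = -\frac{\mu}{\log^a\!\bigl(2 + e^{\frac{2s}{p-1}}\phi(s)^2\bigr)}.
\end{equation*}
Expanding $\log\bigl(2+e^{\frac{2s}{p-1}}\phi^2\bigr) = \tfrac{2s}{p-1} + 2\log\kappa + \mathcal{O}(s^{-1})$, I would extract the leading term $-C_0 s^{-a}$ with $C_0 = \mu\bigl((p-1)/2\bigr)^a$, the remainder being $\mathcal{O}(s^{-a-1})$. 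Plugging into the integral representation of $\eta$,
\begin{equation*}
\eta(s) = C_0\int_s^{+\infty}\frac{e^{s-\tau}}{\tau^a}\,d\tau + \mathcal{O}\!\left(\int_s^{+\infty}\frac{e^{s-\tau}}{\tau^{a+1}}d\tau\right) = C_0\int_s^{+\infty}\frac{e^{s-\tau}}{\tau^a}d\tau\,(1+o(1)).
\end{equation*}
The full expansion in powers of $1/s$ then comes from iterated integration by parts applied to $I_a(s):=\int_s^{+\infty} e^{s-\tau}\tau^{-a}d\tau$: with $u=\tau^{-a}$, $dv = e^{s-\tau}d\tau$, one finds $I_a(s) = s^{-a} - a\, I_{a+1}(s)$, and iterating produces the series $\tfrac{1}{s^a}\bigl(1 + \sum_{j\ge 1} b_j/s^j\bigr)$ with $b_j = (-1)^j\prod_{i=0}^{j-1}(a+i)$, exactly as claimed.

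\medskip

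\textbf{Main obstacle.} The genuinely delicate point is Step 2: because the linearized ODE is exponentially unstable, the existence of a solution decaying to $0$ must be obtained by the correct ``backwards'' integral reformulation rather than by a forward Cauchy problem, and one has to verify that the nonlinear forcing $g(s,\eta)$ does not destroy the $s^{-a}$ decay when $a$ is small. The estimate in Lemma \ref{ap:lemmA2} is tailored precisely for this, so once the functional-analytic setup in the weighted space $X_{s_0}$ is chosen correctly, the contraction argument and the asymptotic expansion become essentially computational.
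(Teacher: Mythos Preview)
Your approach is correct. The paper itself does not prove this lemma but simply refers to Lemma~A.3 of \cite{NG14the}; your sketch is the natural argument and almost certainly the one in that reference as well.

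One small imprecision in Step~2: saying that the Lipschitz constant of $g$ in $\eta$ is merely ``bounded in $s$'' is not quite enough for the contraction in the $s^a$-weighted norm --- with only $|\partial_\eta g|\le L$ you would obtain $\|\mathcal{T}(\eta_1)-\mathcal{T}(\eta_2)\|_{X_{s_0}} \le CL\,\|\eta_1-\eta_2\|_{X_{s_0}}$ with no smallness. The correct observation is that Lemma~\ref{ap:lemmA2} in fact yields $|\partial_\eta g(s,\eta)| \le C s^{-a}$ (the $\eta$-derivative of $g$ produces a term $\phi^{-p}\,e^{-s}h'(e^{s/(p-1)}\phi)\,\partial_\eta\phi$ and a term $(\partial_\eta\phi^{-p})\,e^{-ps/(p-1)}h(e^{s/(p-1)}\phi)$, both of size $s^{-a}$), so the contraction factor is $O(s_0^{-a})$ and can be made $<1$ by taking $s_0$ large. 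With that sharpening, Steps~1--3 go through exactly as written.
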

\begin{proof} See Lemma A.3 in \cite{NG14the}.
\end{proof}
\section{Proof of Lemma \ref{lemm:BKespri}} \label{ap:2a}
In this appendix, we give the proof of Lemma \ref{lemm:BKespri}. The proof follows from the techniques of Bricmont and Kupiainen \cite{BKnon94} with some additional care, since we have a different profile function $\varphi$ defined in \eqref{def:varphi}, and since we give the explicit dependence of the bounds in terms of all the components of initial data. As mentioned early, the proof relies mainly on the understanding of the behavior of the kernel $\mathcal{K}(s,\sigma,y,x)$ (see \eqref{def:kernel}). This behavior follows from a perturbation method around $e^{(s-\sigma)\mathcal{L}}(y,s)$, where the kernel of $e^{t\mathcal{L}}$ is given by Mehler's formula:
\begin{equation}\label{for:kernalL}
e^{t\mathcal{L}}(y,x) = \frac{e^t}{\sqrt{4\pi (1 - e^{-t} )}} exp\left[-\frac{(ye^{-t/2} - x)^2}{4 (1 - e^{-t})}\right].
\end{equation}
By definition \eqref{def:kernel} of $\mathcal{K}$, we use a Feynman-Kac representation for $\mathcal{K}$:
\begin{equation}\label{for:kernelK}
\mathcal{K}(s,\sigma,y,x) = e^{(s-\sigma)\mathcal{L}}(y,x) \int d\mu_{yx}^{s-\sigma}(\omega)e^{\int_0^{s-\sigma}V(\omega(\tau), \sigma +\tau)d\tau},
\end{equation}
where $d\mu_{yx}^{s-\sigma}$ is the oscillator measure on the continuous paths $\omega: [0, s-\sigma] \to \mathbb{R}$ with $\omega(0) = x$, $\omega(s-\sigma) = y$, i.e. the Gaussian probability measure with covariance kernel 
\begin{align*}
\Gamma(\tau, \tau') &= \omega_0(\tau)\omega_0(\tau')\\
&+2 \left(e^{-\frac{1}{2}|\tau - \tau'|} -  e^{-\frac{1}{2}|\tau + \tau'|} + e^{-\frac{1}{2}|2(s - \sigma) + \tau - \tau'|} -  e^{-\frac{1}{2}|2(s - \sigma) - \tau - \tau'|} \right),
\end{align*}
which yields $\int d\mu_{yx}^{s-\sigma} \omega(\tau) = \omega_0(\tau)$, with 
$$\omega_0(\tau) = \left(\sinh((s-\sigma)/2) \right)^{-1}\left(y\sinh(\frac{\tau}{2}) + x\sinh(\frac{s-\sigma -\tau}{2}) \right).$$
In view of \eqref{for:kernelK}, we can consider the expression for $\mathcal{K}$ as a perturbation of $e^{(s-\sigma)\mathcal{L}}$. Since our profile $\varphi$ defined in \eqref{def:varphi} is different from the one defined in \cite{BKnon94}, we have here a potential $V$ defined in \eqref{def:V} which is different as well. Thus, we first estimate the potential $V$, then we restate some basic properties of the kernel $\mathcal{K}$.
\begin{lemm}[\textbf{Estimates on the potential $V$}] \label{ap:lemmEstV} For $s$ large enough, we have \\
$a)\;\;$ $V(y,s) \leq \frac{C}{s^{a'}}$ with $a' = \min\{a,1\}$.\\
$b)$ $\;\; \left|\frac{d^mV(y,s)}{d y^m} \right| \leq \frac{C}{s^{m/2}}$ for $m = 0,1,2$.\\
$c)\;\;$ $|V(y,s)| \leq \frac{C}{s}(1 + |y|^2)$, $\;\; V(y,s) = -\frac{h_2(y)}{4s} + \tilde{V}(y,s)$,\\
where $|\tilde{V}(y,s)| = \mathcal{O}\left(\frac{1 + |y|^4}{s^2}\right) + \mathcal{O}\left(\frac{1}{s^a} \right)$ in the case \eqref{equ:h} and $|\tilde{V}(y,s)| = \mathcal{O}\left(\frac{1 + |y|^4}{s^2}\right) + \mathcal{O}\left(\frac{1 + |y|^2}{s^{a+1}}\right)$ in the case \eqref{equ:h1}. In particular, in both cases $|\tilde{V}(y,s)| \leq \frac{C(1 + |y|^4)}{s^{1 + \bar{a}}}$, where $\bar{a} = \min\{a-1,1\}$ in the case \eqref{equ:h} and $\bar{a} = \min\{a,1\}$ in the case \eqref{equ:h1}.
\end{lemm}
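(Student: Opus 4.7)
I would split $V = V_1 + V_2$ with $V_1 = p\bigl(\varphi^{p-1} - \tfrac{1}{p-1}\bigr)$ and $V_2 = \imath e^{-s}h'(e^{s/(p-1)}\varphi)$. Using \eqref{def:varphi} together with Lemma~\ref{ap:lemmA3} (so that $(\phi/\kappa)^{p-1} = (1+\eta_a)^{-1}$ with $\eta_a(s) = O(s^{-a})$), I obtain the factorization
\begin{equation*}
\varphi^{p-1} = (1+\eta_a)^{-1}\bigl(f(z) + \tfrac{\kappa}{2ps}\bigr)^{p-1}, \qquad z = y/\sqrt{s},
\end{equation*}
which is the common starting point for all three parts. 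Since $0 \le f(z) \le \kappa$ and $\kappa^{p-1} = 1/(p-1)$, the expression is uniformly bounded for large $s$.

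\textbf{Parts (a) and (b).} For (a), the pointwise inequality $f(z) + \kappa/(2ps) \le \kappa(1 + 1/(2ps))$ and a first-order expansion yield
\begin{equation*}
V_1 \le \tfrac{p}{p-1}\bigl[(1+\eta_a)^{-1}(1+1/(2ps))^{p-1} - 1\bigr] \le C\bigl(s^{-1} + s^{-a}\bigr) \le C s^{-a'}.
\end{equation*}
In case \eqref{equ:h1} where $\imath = 1$, Lemma~\ref{ap:lemmA2} applied with $j=1$, $w = \varphi$ (bounded uniformly) and $\varepsilon$ small gives $|V_2| \le C s^{-a}$; in case \eqref{equ:h} the term $V_2$ vanishes identically. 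For (b), the uniform bound $|V| \le C$ is immediate from the above. Higher derivatives use the chain rule: $\varphi$ depends on $y$ only through $z = y/\sqrt{s}$, producing a factor $1/\sqrt{s}$ for each $y$-differentiation, and combined with the smoothness and boundedness of $f, f', f''$ and with Lemma~\ref{ap:lemmA2} applied for $j = 2, 3$ in case \eqref{equ:h1} to the derivatives of $V_2$, this yields $|\partial_y^m V| \le C s^{-m/2}$.

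\textbf{Part (c), the key expansion.} With $G(z) = (1+c_p z^2)^{-1/(p-1)}$ I Taylor-expand both factors simultaneously:
\begin{equation*}
V_1 = \tfrac{p}{p-1}\Bigl[(1+\eta_a)^{-1}(1+c_p z^2)^{-1}\bigl(1 + \tfrac{1}{2psG(z)}\bigr)^{p-1} - 1\Bigr].
\end{equation*}
The $O(1/s)$ contributions combine as $\tfrac{p}{p-1}\bigl(-c_p z^2 + \tfrac{p-1}{2ps}\bigr) = -\tfrac{y^2-2}{4s} = -\tfrac{h_2(y)}{4s}$, with a purely algebraic remainder bounded by $C(1+|y|^4)/s^2$. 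In case \eqref{equ:h}, where $V_2 \equiv 0$, the $(1+\eta_a)^{-1}$ factor contributes a term of order $\eta_a = O(s^{-a})$, giving the stated $\tilde V$. In case \eqref{equ:h1}, the correction $-\tfrac{p}{p-1}\eta_a$ from $V_1$ equals $-\tfrac{p C_0}{(p-1)s^a}$ at leading order, with $C_0 = \mu((p-1)/2)^a$ from Lemma~\ref{ap:lemmA3}. The heart of the argument is then to expand $V_2$: using $h(w) = \mu |w|^{p-1}w/\log^a(2+w^2)$ and the asymptotics $\log(2 + e^{2s/(p-1)}\varphi^2) \sim 2s/(p-1)$ for large $s$, a direct computation yields $V_2 \sim +\tfrac{p C_0}{(p-1)s^a}$ to leading order, exactly cancelling the $\eta_a$ contribution from $V_1$. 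This cancellation is precisely the purpose of inserting the factor $\phi(s)/\kappa$ in \eqref{def:varphi}, and after it only the next-order remainder $O((1+|y|^2)/s^{a+1})$ survives. The global bound $|V| \le C(1+|y|^2)/s$ holds because for bounded $z$ it follows from the expansion, while for large $z$ one has $\varphi \to 0$, forcing $V_1 \to -p/(p-1)$ and $|V_2| = O(s^{-a})$, both bounded in that regime.

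\textbf{Main obstacle.} The principal difficulty lies in part (c), case \eqref{equ:h1}: one must track both expansions (in $z$ and in $1/s$) to sufficient order and verify that the leading $1/s^a$ terms from $\eta_a$ and from $V_2$ match with opposite signs and equal magnitudes. The final unified bound $|\tilde V| \le C(1+|y|^4)/s^{1+\bar a}$ in both cases then follows by elementary comparison of exponents, depending on the relative size of $a$ and $1$ (or $a$ and $2$ in case \eqref{equ:h}).
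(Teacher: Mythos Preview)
Your approach is essentially the same as the paper's: both split $V$ into the $p(\varphi^{p-1}-\tfrac{1}{p-1})$ piece and the $\imath e^{-s}h'$ piece, both use the chain rule through $z=y/\sqrt{s}$ for part~(b), and both identify the crucial cancellation in case~\eqref{equ:h1} between the $\eta_a$ contribution from $(\phi/\kappa)^{p-1}$ and the leading order of $e^{-s}h'(e^{s/(p-1)}\varphi)$.

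The one organizational difference worth noting is in part~(c): where you expand directly in the parameters $c_p z^2$, $1/s$, $\eta_a$ and assert the remainder is $O((1+|y|^4)/s^2)$, the paper instead introduces $Z=|y|^2/s$, writes $\tilde W(Z,s)=V(y,s)$, and proves the uniform bound $|\partial_Z^2\tilde W|\le C$ for all $Z\ge 0$ (via Lemma~\ref{ap:lemmA2} with $\varepsilon=(p-1)/2$ to handle the $h''$, $h'''$ terms). Taylor's theorem with this global second-derivative bound then gives $|\tilde W(Z,s)-\tilde W(0,s)-Z\partial_Z\tilde W(0,s)|\le CZ^2 = C|y|^4/s^2$ for all $y$, after which one only needs the precise values of $\tilde W(0,s)$ and $\partial_Z\tilde W(0,s)$. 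This makes the remainder control uniform in $y$ without splitting into bounded-$z$ and large-$z$ regimes; your direct expansion is correct but leaves the global $(1+|y|^4)/s^2$ remainder bound as an unproved assertion. Filling that in amounts to exactly the paper's second-derivative computation.
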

\begin{proof} $a)$ From the definition \eqref{def:V} of $V$, we see that
$$V(y,s) \leq p (\varphi(0,s)^{p-1} - \kappa^{p-1}) + \imath \left|e^{-s}h'\left(e^\frac{s}{p-1}\varphi \right) \right|,$$
where $\imath$ is defined in \eqref{def:V}. From Lemma \ref{ap:lemmA3}, we have 
$$\varphi(0,s)^{p-1} - \kappa^{p-1} = \kappa^{p-1} \left[(1 + \eta_a(s))^{-1}\left(1 + \frac{1}{2ps}\right)^{p-1} - 1\right] \leq \frac{C}{s^{a'}}.$$
Since $|\varphi|$ is bounded, Lemma \ref{ap:lemmA2} yields $\imath \left|e^{-s}h'\left(e^\frac{s}{p-1}\varphi \right) \right| \leq \frac{\imath C}{s^a}$. This concludes part $a)$.\\
$b)$ We introduce $W(z,s) = V(y,s)$ with $z = \frac{y}{\sqrt{s}}$. In order to derive part $b)$, it is enough to show that $|\frac{d^m W}{d z^m}| \leq C$ for $m = 0,1,2$, which follows easily from Lemma \ref{ap:lemmA2} and the following key estimates 
$$\frac{\partial f(z)}{\partial z} = -\frac{z f(z)}{2p(1 + c_pz^2)},$$
where $f$ and $c_p$ are defined in \eqref{def:f}.\\
$c)$ Since $|V(y,s)| \leq C$ for all $y \in \mathbb{R}$ and $s \geq 1$, considering the cases $|y| \leq \sqrt{s}$, then $|y| \geq \sqrt{s}$, we directly see that the first estimate follows from the second. Hence, we only prove the second. To do so, we introduce $\tilde{W}(Z,s) = V(y,s)$ with $Z = \frac{|y|^2}{s}$. By the definition \eqref{def:varphi} and by a direct calculation, we find that 
\begin{align*} 
\frac{d^2\tilde{W}(Z,s)}{dZ^2} &= p(p-1)(p-2)\varphi^{p-3}(Z,s) \left(\frac{d \varphi(Z,s)}{dZ}\right)^2\\
& + \imath e^{-\frac{(p-3)s}{p-1}} h'''\left(e^\frac{s}{p-1}\varphi(Z,s)\right) \left(\frac{d \varphi(Z,s)}{dZ}\right)^2\\
& + \left[p(p-1)\varphi^{p-2}(Z,s) + \imath e^{-\frac{(p-2)s}{p-1}} h''\left(e^\frac{s}{p-1}\varphi(Z,s)\right) \right] \frac{d^2\varphi(Z,s)}{dZ^2}.
\end{align*}
Applying Lemma \ref{ap:lemmA2} with $\varepsilon = \frac{p-1}{2}$, we see that
\begin{align*}
\left|\frac{d^2\tilde{W}(Z,s)}{dZ^2}\right| &\leq C \left(\varphi^{p-3}(Z,s) +  \imath \varphi^{p-3 - \frac{p-1}{2}}(Z,s)\right)\left(\frac{d \varphi(Z,s)}{dZ}\right)^2\\
&+ C\left(\varphi^{p-2}(Z,s) + \imath \varphi^{p- 2-\frac{p-1}{2}}(Z,s)\right)\left|\frac{d^2\varphi(Z,s)}{dZ^2}\right|, \quad \forall s \geq s_0.
\end{align*}
From the definition \eqref{def:varphi} of $\varphi$, we note that $\frac{d \varphi}{dZ} = -\frac{c_p \phi}{\kappa}F^p(Z)$, where $c_p = \frac{p-1}{4p}$ and $F(Z) = \kappa(1 + c_pZ)^{-\frac{1}{p-1}}$, we derive
$$\varphi(Z,s)^{p-3 - \frac{p-1}{2}}\left(\frac{d \varphi(Z,s)}{dZ}\right)^2 \leq C\left(F + \frac{\kappa}{2ps} \right)^{p-3 - \frac{p-1}{2}}F^{2p} \leq 2C,$$
and 
$$\varphi(Z,s)^{p-2 - \frac{p-1}{2}}\left|\frac{d^2\varphi(Z,s)}{dZ^2}\right| \leq C\left(F + \frac{\kappa}{2ps} \right)^{p-2 - \frac{p-1}{2}}F^{2p}\leq 2C.$$
Hence, $\left|\frac{d^2\tilde{W}(Z,s)}{dZ^2}\right|$ is bounded for all $Z \in [0,+\infty)$ and for all $s \geq s_0$. Then by a Taylor expansion, we have 
$$\left|\tilde{W}(Z,s) - \tilde{W}(0,s) - Z\frac{\partial \tilde{W}(0,s)}{\partial Z}\right| \leq CZ^2, \quad \forall Z \in [0,+\infty), \;\; \forall s \geq s_0.$$
From the definition \eqref{def:varphi} of $\varphi$ and from Lemma \ref{ap:lemmA3}, we have 
\begin{align*}
W(0,s) &= \frac{p}{p-1}\left[\frac{1}{(1 + \eta_a)} \left(1+ \frac{1}{2ps} \right)^{p-1} -1\right] + \imath e^{-s}h'\left(e^\frac{s}{p-1}(\phi + \frac{\phi}{2ps}) \right)\\
&= \frac{1}{2s} - \frac{p}{p-1}\left(\frac{\eta_a(s)}{1 + \eta_a(s)}\right) + \imath e^{-s}h'\left(e^\frac{s}{p-1}\phi \right) + \mathcal{O}\left(\frac{1}{s^{a+1}} \right) + \mathcal{O}\left(\frac{1}{s^{2}} \right).
\end{align*}
Recalling from Lemma \ref{ap:lemmA3} that $\eta_a(s) = \mathcal{O}\left(\frac{1}{s^{a}}\right)$, this immediately yields $W(0,s) = \frac{1}{2s} + \mathcal{O}\left(\frac{1}{s^{a}}\right) + \mathcal{O}\left(\frac{1}{s^{2}}\right)$ in the case \eqref{equ:h}. In the case \eqref{equ:h1}, we obtain by a direct calculation,
\begin{align*}
&\left|- \frac{p}{p-1}\left(\frac{\eta_a(s)}{1 + \eta_a(s)}\right) + \imath e^{-s}h'\left(e^\frac{s}{p-1}\phi \right)\right|\\
&\qquad \qquad= \left|-\frac{p}{(p-1)(1 + \eta_a(s))} \left(\eta_a(s) - \frac{C_0}{s^a} \right)\right| + \mathcal{O}\left(\frac{1}{s^{a+1}} \right) = \mathcal{O}\left(\frac{1}{s^{a+1}} \right).
\end{align*}
In the last estimate, we used that fact that $\eta_a(s)= \frac{C_0}{s^a} +  \mathcal{O}\left(\frac{1}{s^{a+1}}\right)$ in the case \eqref{equ:h1} (see Lemma \ref{ap:lemmA3}). Hence, $W(0,s) = \frac{1}{2s} + \mathcal{O}\left(\frac{1}{s^{a+1}} \right) + \mathcal{O}\left(\frac{1}{s^{2}}\right)$ in the case \eqref{equ:h1}.\\
For $\frac{\partial W(0,s)}{\partial Z}$, we use Lemmas \ref{ap:lemmA2} and \ref{ap:lemmA3} to derive 
\begin{align*}
\frac{\partial W(0,s)}{\partial Z} &= -\frac{1}{4(1 + \eta_a(s))}\left(1 + \frac{1}{2ps} \right)^{p-2} - \imath e^{-\frac{(p-2)s}{p-1}}\frac{\phi}{4p}h''\left(e^\frac{s}{p-1}(\phi + \frac{\phi}{2ps})\right)\\
&= -\frac{1}{4} +  \mathcal{O}\left(\frac{1}{s^a} \right) + \mathcal{O}\left(\frac{1}{s}\right).
\end{align*}
Returning to $V$, we conclude part $c)$. This ends the proof of Lemma \ref{ap:lemmEstV}.
\end{proof}

In what follows, we denote $\int f(y)g(y) \rho(y) dy$ by $\lag f,g \rag$ and write $\chi(y,s) = \chi(s)$ ($\chi$ is defined in \eqref{def:chi}). Let us now recall some basic properties of the kernel $\mathcal{K}$ stated in \cite{BKnon94}:
\begin{lemm}[\textbf{Bricmont and Kupiainen \cite{BKnon94}}]\label{lemm:baskenelK} For all $s \geq \sigma \geq \max\{s_0,1\}$ with $s \leq 2\sigma$ and $s_0$ given in Lemma \ref{ap:lemmA2}, for all $(y,x) \in \R^2$, \\
$a)$ $|\K(s,\sigma,y,x)| \leq Ce^{(s - \sigma)\mL}(y,x)$.\\
$b)$ $\K(s,\sigma,y,x) = e^{(s - \sigma)\mL}(y,x)\left(1 + P_2(y,x) + P_4(y,x) \right)$, where 
$$|P_2(y,x)| \leq \frac{C(s - \sigma)}{s}(1 + |y| + |x|)^2,$$
$$ \text{and} \;\;|P_4(y,x)| \leq \frac{C(s - \sigma)(1 + s - \sigma)}{s^{2}} (1 + |y| + |x|)^4.$$
Moreover, $\left|\lag k_2, \left(\K(s,\sigma) - \left(\frac{\sigma}{s}\right)^2\right)h_2\rag \right| \leq \frac{C(s - \sigma) (1 + s - \sigma)}{s^{1 + \bar{a}}}$, with $\bar{a} = \min\{a-1,1\}$ in the case \eqref{equ:h} and $\bar{a} = \min\{a,1\}$ in the case \eqref{equ:h1}.\\ 
\noindent $c)$ $\|\K(s,\sigma)(1 - \chi)\|_{L^\infty} \leq Ce^{-\frac{(s - \sigma)}{p}}$.
\end{lemm}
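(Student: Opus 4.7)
My plan is to exploit the Feynman--Kac representation \eqref{for:kernelK} and view $\mathcal{K}$ as a perturbation of $e^{(s-\sigma)\mathcal{L}}$ driven by the potential $V$, using the fine information on $V$ provided by Lemma \ref{ap:lemmEstV}. For part $a)$, the bound $V\leq C/\sigma^{a'}$ from Lemma \ref{ap:lemmEstV}(a) together with the assumption $s\leq 2\sigma$ give $\int_0^{s-\sigma} V(\omega(\tau),\sigma+\tau)\,d\tau\leq C$ uniformly in the path $\omega$. The Feynman--Kac expectation in \eqref{for:kernelK} is therefore dominated by a pure constant, and $a)$ follows immediately.

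For part $b)$, I would perform a second-order Taylor expansion of $\exp\int_0^{s-\sigma}V\,d\tau$ and then use the splitting $V(y,\tau)=-h_2(y)/(4\tau)+\tilde V(y,\tau)$ from Lemma \ref{ap:lemmEstV}(c). After averaging the quadratic polynomial $h_2(\omega(\tau))=\omega(\tau)^2-2$ against the oscillator measure $d\mu_{yx}^{s-\sigma}$, the first-order term produces a polynomial of degree at most $2$ in $(y,x)$: the explicit formulas for the mean path $\omega_0(\tau)$ and for the covariance $\Gamma$ give exactly the announced prefactor $(s-\sigma)/s$, yielding $P_2$. Squaring contributes a polynomial of degree $\leq 4$ with prefactor $(s-\sigma)(1+s-\sigma)/s^2$, giving $P_4$. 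Higher-order Taylor terms and the contribution of $\tilde V$ are absorbed into the $P_4$ bound by combining the Gaussian moments of $d\mu_{yx}^{s-\sigma}$ with the pointwise control $|\tilde V|\leq C(1+|y|^4)/s^{1+\bar a}$. For the sharper estimate on the 2-2 matrix element, the point is that $h_2$ is an eigenfunction of $\mathcal{L}$ with eigenvalue $0$, so $e^{t\mathcal{L}}h_2=h_2$; writing $\alpha(s)=\langle k_2,\mathcal{K}(s,\sigma)h_2\rangle$, differentiating via \eqref{def:kernel} and using the self-adjointness of $\mathcal{L}$ gives $\alpha'(s)=\langle k_2,V(\cdot,s)\mathcal{K}(s,\sigma)h_2\rangle$. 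Replacing $V$ by $-h_2/(4s)$ and using the identities $\|h_2\|_{L^2_\rho}^2=8$ and $\int h_2^3\rho=64$ reduce this at leading order to the scalar ODE $\alpha'=-2\alpha/s$, whose solution is $\alpha(s)=(\sigma/s)^2$. The remainder from $\tilde V$ then contributes exactly the $O((s-\sigma)(1+s-\sigma)/s^{1+\bar a})$ error stated.

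For part $c)$, the decisive input is the asymptotics \eqref{equ:asymV}: outside the parabolic region $|y|\geq K\sqrt s$ one has $V\approx -p/(p-1)$. Since $(1-\chi(\cdot,\sigma))$ is supported precisely there, the Feynman--Kac paths contributing to $\mathcal{K}(s,\sigma)(1-\chi)$ either stay in the exterior throughout $[\sigma,s]$---in which case $\int V\leq -(p/(p-1)-\varepsilon)(s-\sigma)$ for $\varepsilon$ arbitrarily small---or make a large excursion into the interior, which costs an exponentially small Gaussian factor in $d\mu_{yx}^{s-\sigma}$. Combining this with the $L^\infty$-bound $\|e^{(s-\sigma)\mathcal{L}}\|\leq e^{s-\sigma}$ read off from Mehler's formula \eqref{for:kernalL}, the net decay is $e^{(s-\sigma)(1-p/(p-1)+\varepsilon)}$; since $1-p/(p-1)=-1/(p-1)<-1/p$, choosing $\varepsilon$ small enough yields the announced exponent $-(s-\sigma)/p$.

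The main obstacle will be part $b)$: one has to track simultaneously the polynomial structure in $(y,x)$ that defines $P_2$ and $P_4$, the precise time prefactor $(s-\sigma)/s$ rather than the cruder $(s-\sigma)/\sigma$, and the $\bar a$-improved remainder in the 2-2 mode. This forces us to use the full asymptotic expansion of $V$ near $y=0$ at time $s$ provided by Lemma \ref{ap:lemmEstV}(c), not merely its pointwise bounds, and to combine it cleanly with explicit Gaussian moment identities for the oscillator bridge. Once this bookkeeping is in place, everything else reduces to routine estimates on the exponential expansion and on the Gaussian integrals.
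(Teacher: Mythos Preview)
Your proposal is correct and follows essentially the same route as the paper, which for part $a)$ gives exactly your Feynman--Kac argument and for parts $b)$ and $c)$ simply refers to Bricmont--Kupiainen \cite{BKnon94}, noting that the only change is the form of $\tilde V$ in Lemma~\ref{ap:lemmEstV}(c). Your sketch in fact supplies more detail than the paper does; the one mild deviation is your ODE treatment of the $2$--$2$ matrix element (differentiating $\alpha(s)=\langle k_2,\mathcal K(s,\sigma)h_2\rangle$ and reducing to $\alpha'=-2\alpha/s$ up to errors), whereas \cite{BKnon94} extracts the same factor $(\sigma/s)^2$ directly from the Feynman--Kac expansion --- both arguments rely on the same identities $\|h_2\|_{L^2_\rho}^2=8$, $\int h_2^3\rho=64$ and on the $\tilde V$ bound, and both require the $P_2,P_4$ expansion to be in hand first, so there is no circularity and no substantive difference.
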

\begin{proof} $a)$ From part $a)$ of Lemma \ref{ap:lemmEstV} and the definition \eqref{for:kernelK} of $\K$, we have
\begin{align*}
|\mathcal{K}(s,\sigma,y,x)| &\leq e^{(s-\sigma)\mathcal{L}}(y,x) \int d\mu_{yx}^{s-\sigma}(\omega)e^{\int_0^{s-\sigma}C(\sigma +\tau)^{-a'}d\tau}\\
&\leq Ce^{(s-\sigma)\mathcal{L}}(y,x) \int d\mu_{yx}^{s-\sigma} (\omega) \leq C e^{(s-\sigma)\mathcal{L}}(y,x),
\end{align*}
since $s \leq 2\sigma$ and $d\mu_{yx}^{s-\sigma}$ is a probability.\\
$b)$ The proof is exactly the same as the corresponding one written in \cite{BKnon94}. Although there is the difference of $\tilde{V}(y,s)$ given in part $c)$ of Lemma \ref{ap:lemmEstV}, this change does not affect the argument given in \cite{BKnon94}. For that reason, we refer the reader to Lemma 5, page 555 in \cite{BKnon94} for details of the proof.\\
$c)$ Our potential $V$ given in \eqref{def:V} has the same behavior as the potential in \cite{BKnon94} for $\frac{|y|^2}{s}$ and $s$ large (see \eqref{equ:asymV}). For that reason, we refer to Lemma, page 559 in \cite{BKnon94} for its proof.
\end{proof}

Before going to the proof of Lemma \ref{lemm:BKespri}, we would like to state some basic estimates which will be used frequently in the proof. 
\begin{lemm} For $K$ large enough, we have the following estimates:\\
$a)$ For any polynomial $P$, 
\begin{equation}\label{pro:kerL1}
\int P(y)\mathbf{1}_{\{|y| \geq K\sqrt{s}\}} \rho(y)dy \leq C(P)e^{-s}.
\end{equation}
$b)$ Let $p \geq 0$ and $|f(x)| \leq (1 + |x|)^p$, then 
\begin{equation}\label{pro:kerL2}
|(e^{t\mL}f)(y)| \leq C e^t(1 + e^{-t/2}|y|)^p,
\end{equation}
\end{lemm}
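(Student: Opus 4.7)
The plan is to handle the two estimates separately. Both are elementary Gaussian computations, but they rely on different ingredients: part $a)$ exploits the fast decay of the weight $\rho$, while part $b)$ is a direct consequence of Mehler's explicit formula \eqref{for:kernalL}.

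For part $a)$, my approach is to split the exponent of $\rho(y) = (4\pi)^{-1/2} e^{-|y|^2/4}$. On the region $\{|y| \geq K\sqrt{s}\}$ one has $e^{-|y|^2/4} \leq e^{-K^2 s/8}\, e^{-|y|^2/8}$, so the integral is bounded by $C\, e^{-K^2 s/8} \int |P(y)|\, e^{-|y|^2/8}\, dy$. The remaining integral is a finite constant $C(P)$ depending only on the polynomial $P$, and choosing $K$ with $K^2 \geq 8$ yields the claimed factor $e^{-s}$.

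For part $b)$, I would begin from Mehler's formula \eqref{for:kernalL} and perform the change of variables $x = y e^{-t/2} + 2\sqrt{1 - e^{-t}}\, u$ in the integral $(e^{t\mathcal{L}}f)(y) = \int e^{t\mathcal{L}}(y,x) f(x)\,dx$. This rewrites the convolution in the standard-Gaussian form $\pi^{-1/2} e^t \int e^{-u^2} f\!\left(y e^{-t/2} + 2\sqrt{1-e^{-t}}\, u\right) du$. Applying the bound on $f$ together with the elementary inequality $(1 + |a+b|)^p \leq (1+|a|)^p (1+|b|)^p$ (for $p \geq 0$) isolates the factor $(1 + e^{-t/2}|y|)^p$, while the remaining integral $\int e^{-u^2}(1 + 2|u|)^p\,du$ is finite by a standard Gaussian moment estimate and depends only on $p$.

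I do not expect any genuine obstacle; both proofs are routine once one writes the Gaussian densities explicitly. The only care needed is to pick $K$ large enough in part $a)$ so that $K^2/8 \geq 1$, and, in part $b)$, to note that $2\sqrt{1-e^{-t}} \leq 2$ so that the $u$-integral is bounded by a constant independent of $t \geq 0$.
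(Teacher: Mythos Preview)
Your proposal is correct and follows essentially the same approach as the paper: the paper's own proof merely states that part $a)$ ``follows from a direct calculation'' and that part $b)$ ``follows from the explicit expression \eqref{for:kernalL} by a simple change of variable,'' and you have carried out precisely these computations in detail.
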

\begin{proof} $i)$ follows from a direct calculation. $ii)$ follows from the explicit expression \eqref{for:kernalL} by a simple change of variable. 
\end{proof}

\noindent Let us now give the proof of Lemma \ref{lemm:BKespri}.
\begin{proof}[\textbf{Proof of Lemma \ref{lemm:BKespri}}] We consider $\lambda > 0$, let $\sigma_0 \geq \lambda$, $\sigma \geq \sigma_0$ and $\psi(\sigma)$ satisfying \eqref{equ:boundpsisigma}. We want to estimate some components of $\theta(y,s) = \K(s,\sigma)\psi(\sigma)$ for each $s \in [\sigma, \sigma + \lambda]$. Since $\sigma \geq \sigma_0 \geq \lambda$, we have 
\begin{equation}\label{equ:ressigma}
\sigma \leq s \leq 2\sigma.
\end{equation}
Therefore, up to a multiplying constant, any power of any $\tau \in [\sigma, s]$ will be bounded systematically by the same power of $s$.

\noindent $a)$ \textbf{Estimate of $\theta_2$}: We first write 
\begin{align*}
\theta_2(s) &= \lag k_2, \chi(s)\K(s,\sigma)\psi(\sigma)\rag\\
&= \sigma^2s^{-2}\psi_2(\sigma) + \lag k_2, (\chi(s) - \chi(\sigma))\sigma^2s^{-2}\psi(\sigma)\rag \\
& \qquad + \lag k_2, \chi(s)(\K(s,\sigma)-\sigma^2s^{-2})\psi(\sigma)\rag := \sigma^2s^{-2}\psi_2(\sigma) + Ib + IIb.
\end{align*}
To bound $Ib$, we write $\psi(x,\sigma) = \sum_{l=0}^2\psi_l(\sigma)h_l(x) + \frac{\psi_-(x,\sigma)}{1+|x|^3} (1 + |x|^3) + \psi_e(x,\sigma)$ and  use \eqref{pro:kerL1} to derive
$$|Ib| \leq C (s-\sigma)e^{-s}\sigma^2s^{-2}\left(\sum_{l=0}^2|\psi_l(\sigma)| + \left\|\frac{\psi_-(x,\sigma)}{1+|x|^3}\right\|_{L^\infty}+\|\psi_e(\sigma)\|_{L^\infty}\right).$$
For $IIb$, we write 
\begin{align*}
IIb &= \sum_{l=0}^2 \lag k_2, \chi(s)(\K(s,\sigma) - \sigma^2s^{-2})h_l\rag\psi_l(\sigma)\\
&+ \lag k_2, \chi(s)(\K(s,\sigma) - \sigma^2s^{-2})\psi_-(\sigma)\rag \\
&+ \lag k_2, \chi(s)(\K(s,\sigma) - \sigma^2s^{-2})\psi_e(\sigma)\rag := IIb.1 + IIb.2 + IIb.3.
\end{align*}
Let us bound $IIb.1$. For $l = 2$, we already get from part $b)$ of Lemma \ref{lemm:baskenelK} and \eqref{pro:kerL1} that 
$$\left|\lag k_2, \chi(s)(\K(s,\sigma) - \sigma^2s^{-2})h_2\rag \psi_2(\sigma)\right| \leq \frac{C(s - \sigma)(1 + s - \sigma)}{s^{1 + \bar{a}}}|\psi_2(\sigma)|,$$
with $\bar{a} > 0$.\\
For $l = 0$ or $1$, we use $b)$ of Lemma \ref{lemm:baskenelK},  \eqref{pro:kerL2}, \eqref{pro:kerL1} and the fact that $\lag k_2,h_l\rag = 0$ and $e^{(s-\sigma)\mL}h_l = e^{(1 - l/2)(s - \sigma)}h_l$ to find that
\begin{align*}
\left|\lag k_2, \chi(s)(\K(s,\sigma) - \sigma^2s^{-2})h_l\rag \psi_l(\sigma)\right| &\leq  \left|\lag k_2, \chi(s)\left(\K(s,\sigma) - e^{(s-\sigma)\mL}\right)h_l\rag\right||\psi_l(\sigma)|\\
& + \left|\lag k_2, \chi(s)\left(e^{(s-\sigma)\mL} - \sigma^2s^{-2}\right)h_l\rag\right||\psi_l(\sigma)|\\
&\leq C(s-\sigma)\left(s^{-1} + e^{-s}\right)|\psi_l(\sigma)|\\
&\leq \frac{C(s-\sigma)}{s}|\psi_l(\sigma)|.
\end{align*}
This yields 
$$|IIb.1| \leq \frac{C(s - \sigma)}{s}\sum_{l=0}^2|\psi_l(\sigma)|.$$
If we write $\psi_-(x,\sigma) = \frac{\psi_-(x,\sigma)}{1+|x|^3}(1 + |x|^3)$ and use the same arguments as for $l = 0$, we obtain 
$$|IIb.2| \leq \frac{C(s - \sigma)}{s}\left\| \frac{\psi_-(x,\sigma)}{1+|x|^3}\right\|_{L^\infty}.$$
For $IIb.3$, we write 
\begin{align*}
IIb.3 &= \lag k_2, \chi(s)\left(\K(s,\sigma) - e^{(s-\sigma)\mL}\right)\psi_e(\sigma)\rag \\
& + \lag k_2, \chi(s)\left(e^{(s-\sigma)\mL}-1\right)\psi_e(\sigma)\rag+ \lag k_2, \chi(s) (1 - \sigma^2 s^{-2})\psi_e(\sigma)\rag.
\end{align*}
Using \eqref{pro:kerL1}, we bound the last term by $C(s- \sigma)e^{-\sigma}\|\psi_e(\sigma)\|_{L^\infty} \leq C(s- \sigma)e^{-s/2}\|\psi_e(\sigma)\|_{L^\infty}$ from \eqref{equ:ressigma}. For the second term, we write $e^{(s-\sigma)\mL} - 1 = \int_0^{s - \sigma} d\tau \mL e^{\tau \mL}$ and use the fact that
\begin{equation}\label{est:XsXsi}
\sup_{|y| \leq 2K\sqrt{s}, |x| \geq K \sqrt{\sigma}} e^{-\frac{|y|^2}{4} - \frac{(ye^{-\tau/2} - x)^2}{4(1 - e^{-\tau})}} \leq e^{-2s},
\end{equation}
for $K$ large enough, then it is also bounded by $C(s- \sigma)e^{-s}\|\psi_e(\sigma)\|_{L^\infty}$. For the first term, we use $b)$ of Lemma \ref{lemm:baskenelK}, \eqref{pro:kerL2} and again \eqref{est:XsXsi} to bound it by $C(s- \sigma)s^{-1}e^{-s}\|\psi_e(\sigma)\|_{L^\infty}$. This yields 
$$|IIb.3| \leq C(s - \sigma)e^{-s/2}\|\psi_e(\sigma)\|_{L^\infty}.$$
Collecting all these bounds yields the bound for $\theta_2(s)$ as stated in \eqref{equ:boundThe_2}.\\

\noindent $b)$ \textbf{Estimate of $\theta_-$:} By definition, 
\begin{align*}
\theta_-(y,s) &= P_-\left[\chi(s)\K(s,\sigma)\psi(\sigma)\right] = \sum_{l = 0}^2 \psi_l(\sigma)P_-\left[\chi(s)\K(s,\sigma)h_l\right]\\
&+ P_-\left[\chi(s)\K(s,\sigma)\psi_-(\sigma)\right] + P_-\left[\chi(s)\K(s,\sigma)\psi_e(\sigma)\right]:= Ic + IIc + IIIc.
\end{align*}
In order to bound $Ic$, we write $\K(s,\sigma) = \K(s,\sigma) - e^{(s - \sigma)\mL} + e^{(s - \sigma)\mL}$, then we use the fact that $e^{(s-\sigma)\mL}h_l = e^{(1 - l/2)(s - \sigma)}h_l$, part $b)$ of Lemma \ref{lemm:baskenelK} and \eqref{pro:kerL2} to derive for $l = 0,1,2$,
\begin{align*}
\left|\left(\K(s,\sigma) - e^{(s - \sigma)(1 - l/2)}\right)h_l \right|  &= \left|e^{(s-\sigma)\mL}\left(P_2 + P_4\right)h_l\right|\\
&\leq \frac{Ce^{s-\sigma}(s-\sigma)}{s}\left(1 + e^{-(s - \sigma)/2}|y|\right)^{2+l}\\
& + \frac{Ce^{s-\sigma}(s - \sigma)(1 + s - \sigma)}{s^{2}}\left(1 +e^{-(s - \sigma)/2}|y|\right)^{4 + l}.
\end{align*}
On the support of $\chi(s)$, namely when $|y| \leq 2K\sqrt{s}$, we can bound $s^{-k/2}|y|^k$ by $C$ for $k \in \mathbb{N}$. Then, from the easy-to-check fact that 
\begin{equation}\label{equ:propFP_}
\text{if}\;\; |f(y)| \leq m(1+ |y|^3), \;\;\text{then}\;\; P_-\left[f(y)\right] \leq Cm(1 + |y|^3),
\end{equation}
we obtain
\begin{align*}
l = 0, 1,\;\;\; &P_- \left[\psi_l(\sigma) \chi(s) \K(s,\sigma)h_l - \psi_l(\sigma)e^{(s - \sigma)(1 - l/2)}(\chi(s) h_l) \right]\\
& \qquad \qquad \qquad \qquad \leq \frac{Ce^{s - \sigma}(s - \sigma)(1 + s - \sigma)}{s}(1 + |y|^3)|\psi_l(\sigma)|,
\end{align*}
and 
\begin{align*}
&P_- \left[\psi_2(\sigma) \chi(s) \K(s,\sigma)h_2 - \psi_2(\sigma)e^{(s - \sigma)(1 - l/2)}(\chi(s) h_2) \right]\\
& \qquad \qquad \qquad \qquad \leq \frac{Ce^{s - \sigma}(s - \sigma)(1 + s - \sigma)}{\sqrt{s}}(1 + |y|^3)|\psi_2(\sigma)|.
\end{align*}
Since $P_-(h_l) = 0$ and $|(1 - \chi(y,s))h_l(y)| \leq Cs^{-3/2 + l/2}(1 + |y|^3)$, we have
$$l= 0, 1, 2, \;\;\left|\psi_l(\sigma)e^{(s-\sigma)(1 - l/2)}P_- \left[\chi(s) h_l(y)\right]\right| \leq \frac{Ce^{s-\sigma}}{s^{3/2-l/2}}|\psi_l(\sigma)|(1 + |y|^3).$$
Hence, 
$$|Ic| \leq \frac{Ce^{s - \sigma}\left((s - \sigma)^2 + 1\right)}{s}\left( |\psi_0(\sigma)| + |\psi_1(\sigma)| + \sqrt{s}|\psi_2(\sigma)|\right)(1 + |y|^3).$$
To bound $IIIc$, we use $a)$ of Lemma \ref{lemm:baskenelK} and the definition \eqref{for:kernalL} of $e^{(s - \sigma)\mL}$ to write
\begin{align*}
\left\|\frac{\chi(y,s)\K(s,\sigma) \psi_e(x,\sigma)}{1 + |y|^3} \right\|_{L^\infty} &\leq C e^{s - \sigma} \|\psi_e(\sigma)\|_{L^\infty}\\
&\sup_{|y| \leq 2K\sqrt{s},|x| \geq K\sqrt{\sigma}}e^{-\frac{1}{2}\frac{(ye^{-(s - \sigma)/2} - x)^2}{4(1 - e^{-(s - \sigma)})}}(1+|y|^3)^{-1}\\
&\leq \left\{\begin{array}{ll}
Cs^{-3/2}\|\psi_e(\sigma)\|_{L^\infty} & \text{if}\; s-\sigma \leq s_*\\
C e^{-s}\|\psi_e(\sigma)\|_{L^\infty}& \text{if}\; s-\sigma \geq s_*
\end{array} \right.
\end{align*}
for a suitable constant $s_*$.\\
Exploiting again \eqref{equ:propFP_}, we obtain the bound on this term which can be written as
$$|IIIc| \leq Cs^{-3/2}e^{-(s - \sigma)^2}\|\psi_e(\sigma)\|_{L^\infty}(1 + |y|^3) \;\; \text{for $\sigma$ large enough}.$$
We still have to consider $IIc$. In order to bound this term, we proceed as in \cite{BKnon94}. We write
\begin{equation}\label{equ:tmpKe}
\K(s,\sigma)\psi_-(\sigma) = \int dx e^{x^2/4}\K(s,\sigma)(\cdot,x)f(x) =\int dx N(\cdot,x) E(\cdot,x)f(x),
\end{equation}
where $f(x) = e^{-x^2/4}\psi_-(x,\sigma)$ and 
$$N(y,x) = \frac{e^{s - \sigma}e^{x^2/4}}{\sqrt{4\pi(1 - e^{-(s - \sigma)})}}e^{-\frac{(ye^{-(s - \sigma)/2} - x)^2}{4(1 - e^{-(s - \sigma)})}},$$
$$E(y,x) = \int d\mu^{s - \sigma}_{yx}(\omega)e^{\int_0^{s - \sigma}V(\omega(\tau),\sigma +\tau)d\tau}.$$
Let $f^0 = f$ and for $m \geq 1$, $f^{(-m-1)}(y) = \int_{-\infty}^y dx f^{(-m)}(x)$, then we have the following:
\begin{lemm}\label{lemm:estfm} $|f^{(-m)}(y)| \leq C\left\|\frac{\psi_-(x,\sigma)}{1+ |x|^3}\right\|_{L^\infty} (1 + |y|)^{(3-m)} e^{-y^2/4}$ for $m \leq 3$.
\end{lemm}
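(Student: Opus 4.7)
The plan is to proceed by finite induction on $m \in \{0,1,2,3\}$. For the base case $m=0$, the definition $f(x) = e^{-x^2/4}\psi_-(x,\sigma)$ together with the pointwise bound $|\psi_-(x,\sigma)| \leq M(1+|x|^3)$, where $M := \left\|\frac{\psi_-(\cdot,\sigma)}{1+|\cdot|^3}\right\|_{L^\infty}$, directly yields $|f(y)| \leq CM(1+|y|)^3 e^{-y^2/4}$ after using the elementary inequality $1+|y|^3 \leq (1+|y|)^3$.

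The crucial ingredient for the inductive step is the orthogonality of $\psi_-(\cdot,\sigma)$ to $h_0$, $h_1$, $h_2$ in $L^2_\rho$, which holds by the very definition of the projector $P_-$. Since $\rho(y) = (4\pi)^{-1/2}e^{-y^2/4}$ and $\mathrm{span}\{h_0,h_1,h_2\} = \mathrm{span}\{1,y,y^2\}$, this translates into $\int_{\R} y^k f(y)\, dy = 0$ for $k=0,1,2$. Combining this with integration by parts — whose boundary terms vanish thanks to the Gaussian decay established at the previous induction level — we obtain $\int_{\R} f^{(-j)}(x)\, dx = 0$ for $j=0,1,2$, hence $f^{(-j-1)}(+\infty)=0$ for $j=0,1,2$. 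Consequently, for each $m \in \{1,2,3\}$, we may rewrite
$$f^{(-m)}(y) = -\int_y^{+\infty} f^{(-m+1)}(x)\, dx.$$

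To close the induction, assume the bound at level $m-1$, namely $|f^{(-m+1)}(x)| \leq CM(1+|x|)^{4-m}e^{-x^2/4}$. For $y\geq 1$, iterated integration by parts using the identity $x\, e^{-x^2/4}\, dx = -2\, d(e^{-x^2/4})$ yields $\int_y^{+\infty}(1+x)^{4-m}e^{-x^2/4}\, dx \leq C(1+y)^{3-m}e^{-y^2/4}$, which gives the desired estimate at level $m$. For $y \leq -1$, one argues symmetrically using $f^{(-m)}(y) = \int_{-\infty}^y f^{(-m+1)}(x)\, dx$, which is equivalent to the representation above thanks to the vanishing of the full integral. For $|y|\leq 1$, the bound is trivial since both the polynomial prefactor and $e^{-y^2/4}$ are bounded above and below by universal constants.

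The only non-routine point will be the careful justification of the vanishing boundary terms in the integrations by parts that establish $\int_{\R} f^{(-j)}\, dx = 0$; but this is straightforward given the Gaussian tail bound transferred from the previous induction level. Note that the induction terminates precisely at $m=3$ because only three moments of $f$ are guaranteed to vanish, which is exactly why the statement is restricted to $m\leq 3$.
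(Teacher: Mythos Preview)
Your proof is correct. The paper itself does not prove this lemma but simply refers the reader to Lemma~6, page~557 of Bricmont--Kupiainen \cite{BKnon94}; your argument is precisely the standard one behind that reference. The key points --- the orthogonality of $\psi_-(\cdot,\sigma)$ to $h_0,h_1,h_2$ in $L^2_\rho$ yielding the vanishing moments $\int y^k f(y)\,dy=0$ for $k=0,1,2$, the resulting two-sided representation $f^{(-m)}(y)=\int_{-\infty}^y f^{(-m+1)}=-\int_y^{+\infty} f^{(-m+1)}$, and the inductive Gaussian tail estimate via $x\,e^{-x^2/4}\,dx=-2\,d(e^{-x^2/4})$ --- are exactly what is needed, and you have ordered them correctly so that the Gaussian decay at level $m-1$ is available before it is used to kill the boundary terms in the integration by parts establishing $\int f^{(-(m-1))}=0$.
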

\begin{proof} See Lemma 6, page 557 in \cite{BKnon94}. 
\end{proof}
\noindent We now rewrite \eqref{equ:tmpKe} by integrating by parts as follows:
\begin{align}
\K(s,\sigma)\psi_-(\sigma) &= \sum_{l = 0}^2 (-1)^{l + 1}\int dx \partial_x^lN(y,x) \partial_x E(y,x)f^{(-l-1)}(x) \nonumber\\
& +\int dx \partial_x^3N(y,x) E(y,x)f^{-3}(x).\label{equ:tmpestKpar}
\end{align}
From the definition of $N(y,x)$, we have for $l = 0,1,2, 3$,
$$|\partial_x^lN(y,x)| \leq Ce^{-l(s-\sigma)/2}(1+ |y| + |x|)^le^{x^2/4}e^{(s - \sigma)\mL}(y,x).$$
Now using the integration by parts formula for Gaussian measures to write
\begin{align*}
&\partial_xE(y,x) = \frac{1}{2}\int_0^{s - \sigma}\int_0^{s - \sigma} d\tau d\tau' \partial_x \Gamma(\tau, \tau')\int d\mu_{yx}^{s - \sigma}(\omega)V'(\omega(\tau), \sigma+\tau)\\
&V'(\omega(\tau'), \sigma + \tau')e^{\int_0^{s-\sigma}d\tau''V(\omega(\tau''),\sigma + \tau'')}\\
& +\frac{1}{2}\int_0^{s - \sigma}d\tau \partial_x\Gamma(\tau,\tau)\int d\mu_{yx}^{s - \sigma}(\omega)V''(\omega(\tau), \sigma + \tau)e^{\int_0^{s - \sigma}d\tau''V(\omega(\tau''),\sigma + \tau'')}.
\end{align*}
Recalling from Lemma \ref{ap:lemmEstV} that $V(y,s) \leq \frac{C}{s^{a'}}$ with $a' > 0$ and $\left|\frac{d^mV(y,s)}{dy^m} \right| \leq \frac{C}{s^{m/2}}$ for $m = 0, 1, 2$. Since $s \leq 2\sigma$, this yields $\int_0^{s - \sigma}V(\omega(\tau), \sigma + \tau)d\tau \leq C$. Because $d\mu^{s-\sigma}_{yx}$ is a probability, we then obtain
$$|E(y,x)| \leq C \quad \text{and} \quad |\partial_xE(y,x)| \leq \frac{C}{s}(s - \sigma)(1 + s - \sigma)(|y| + |x|).$$
Substituting all these bounds into \eqref{equ:tmpestKpar}, then using \eqref{pro:kerL2}, Lemma \ref{lemm:estfm}, the fact that $s^{-1}(s - \sigma)(1 + s - \sigma) \leq e^{-3/2(s - \sigma)}$ for $s$ large and then \eqref{equ:propFP_}, we derive 
$$|IIc| \leq Ce^{-(s - \sigma)/2}\left\|\frac{\psi_-(x,\sigma)}{1+ |x|^3}\right\|_{L^\infty} (1 + |y|^3).$$
Collecting all the bounds for $Ic, IIc$ and $IIIc$, we obtain the bound \eqref{equ:boundThe_ne}.

\noindent $c)$ \textbf{Estimate for $\theta_e$}: By definition, we write
$$\theta_e(y,s) = (1 - \chi(y,s))\K(s,\sigma)\psi(\sigma) = (1 - \chi(y,s))\K(s,\sigma) \left(\psi_b(\sigma)+ \psi_e(\sigma)\right).$$
Using $c)$ of Lemma \ref{lemm:baskenelK}, we have
$$\|(1 - \chi(y,s))\K(s,\sigma)\psi_e(\sigma)\|_{L^\infty} \leq Ce^{-(s - \sigma)/p}\|\psi_e(\sigma)\|_{L^\infty}.$$
It remains to bound $(1 - \chi(y,s))\K(s,\sigma) \psi_b(\sigma)$. To this end, we write 
$$\psi_b(x,\sigma) = \sum_{l=0}^2\psi_l(\sigma)h_l(x) + \frac{\psi_-(x,\sigma)}{1 + |x|^3}(1 + |x|^3),$$
then we use $\chi(x,\sigma)|x|^k \leq C\sigma^{k/2} \leq Cs^{k/2}$ for $k \in \mathbb{N}$, and $a)$ of Lemma \ref{lemm:baskenelK} to derive
\begin{align*}
\|(1 - \chi(y,s))\K(s,\sigma) \psi_b(x,\sigma)\|_{L^\infty} &\leq Ce^{s - \sigma}\sum_{l=0}^2s^{l/2}|\psi_l(\sigma)|\\
& + Ce^{s - \sigma}s^{3/2}\left\|\frac{\psi_-(x,\sigma)}{1 + |x|^3}\right\|_{L^\infty}.
\end{align*}
This yields the bound \eqref{equ:boundThe_e} and concludes the proof of Lemma \ref{lemm:BKespri}.
\end{proof}
\section{Proof of Lemma \ref{lemm:estonBRN}} \label{ap:lemmEstAll}
We give the proof of Lemma \ref{lemm:estonBRN} here.
\begin{proof}[\textbf{Proof of Lemma \ref{lemm:estonBRN}}] $i)$ From the definition \eqref{def:B} of $B$, we use a Taylor expansion and the boundedness of $|\varphi|$ and $|q|$ to find that
\begin{equation}\label{equ:TestonB}
|\chi(\tau) B(q(\tau))| \leq C|q(\tau)|^2 \quad \text{and} \quad |B(q(\tau))| \leq C|q(\tau)|^{p'},
\end{equation}
where $p' = \min\{2,p\}$. \\
(Since we have the same definition of $B$ as in \cite{MZdm97}, we do not give the proof of \eqref{equ:TestonB} and kindly refer the reader to Lemma 3.15, page 168 of \cite{MZdm97} for its proof.)\\
Using \eqref{equ:TestonB} and \eqref{iq:VAbe}, we have
\begin{align}
|\chi(\tau) B(q(\tau))| \leq \frac{CA^4}{\tau^{3 + 2\varrho}}(1 + |y|^6) + \frac{CA^4}{\tau^{2 + 2\nu}}(1 + |y|^4).\label{equ:estTmponB}
\end{align}
From \eqref{equ:estTmponB}, we then derive for $m = 0,1,2$,
\begin{equation}\label{equ:apestBm}
|B_m(\tau)| = \left|\int \chi(\tau) B(q(\tau)) k_m \rho dy \right| \leq \frac{CA^4}{\tau^{2 + 2\nu}}.
\end{equation}
Since $B_-(y,\tau) = \chi(\tau) B(q(\tau)) - \sum_{m = 0}^2B_m(\tau)h_m(y)$, we have from \eqref{equ:estTmponB} and \eqref{equ:apestBm}, 
\begin{align*}
\left|\frac{B_-(y,\tau)}{1+|y|^3}\right|& \leq \left|\frac{\chi(\tau) B(q(\tau))}{1+|y|^3}\right| + \left|\frac{\sum_{m = 0}^2B_m(\tau)h_m(y)}{1 +|y|^3} \right|\\
& \leq \chi(\tau)\left[\frac{CA^4}{\tau^{3 + 2\varrho}}(1 +|y|^3) + \frac{CA^4}{\tau^{2 + 2\nu}}(1 +|y|)\right] + \frac{CA^4}{\tau^{2 + 2\nu}} \left( \frac{|\sum_{m = 0}^2 h_m(y)|}{1 + |y|^3} \right)
\end{align*}
If we use $|y|^l\chi(y,\tau)\leq C\tau^{l/2}$ for $l \in \mathbb{N}$, and $|\sum_{m = 0}^2 h_m(y)| \leq C(1 + |y|^2)$, then we obtain
$$\left\|\frac{B_-(y,\tau)}{1+|y|^3}\right\|_{L^\infty} \leq \frac{CA^4}{\tau^{3/2 + 2\varrho}}.$$
Using the second estimates in \eqref{equ:TestonB} and  \eqref{iq:VAbe}, we obviously obtain $\|B(\tau)\|_{L^\infty} \leq \frac{CA^{2p'}}{\tau^{\varrho p'}}$ which yields $\|B_e(\tau)\|_{L^\infty} \leq \frac{CA^{2p'}}{\tau^{\varrho p'}}$. This ends the proof of part $i)$.\\

\noindent $ii)$ From the definition \eqref{def:R} of $R$, we write $\varphi(y,\tau) = \frac{\phi(\tau)}{\kappa} \vartheta(y,\tau)$ and $R(y,\tau) = \frac{\phi(\tau)}{\kappa}Q + G$, where $\vartheta(y,\tau) = f(\frac{y}{\sqrt{\tau}}) + \frac{\kappa}{2p\tau}$ and
\begin{align}
Q(y,\tau) &= -\vartheta_\tau + \Delta \vartheta - \frac{y}{2}\nabla \vartheta - \frac{\vartheta}{p-1} + \vartheta^p,\label{def:QofR}\\
G(y,\tau) &= -\frac{\phi'}{\kappa}\vartheta - \frac{\phi}{\kappa}\vartheta^p + \phi^p\left(\frac{\vartheta}{\kappa}\right)^p + e^{\frac{-p\tau}{p-1}}h\left(e^\frac{\tau}{p-1}\frac{\phi}{\kappa} \vartheta\right).\label{def:GofR}
\end{align}
The conclusion of part $ii)$ is a direct consequence of the following:
\begin{lemm}\label{lemm:estonQR} There exists $\sigma_7 > 0$ such that for all $\tau \geq \sigma_7$, we have\\
$i)$ (\textbf{Estimates on $Q$})
\begin{align}
m = 0,1,\;&|Q_m(\tau)| \leq \frac{C}{\tau^2}, \quad |Q_2(\tau)| \leq \frac{C}{\tau^3},\nonumber\\
&\left\|\frac{Q_-(y,\tau)}{1 + |y|^3}\right\|_{L^\infty} \leq \frac{C}{\tau^2}, \quad \|Q_e(\tau)\|_{L^\infty} \leq \frac{C}{\sqrt{\tau}}.\label{est:QofR}
\end{align}
$ii)$ (\textbf{Estimates on $G$})
\begin{equation}\label{est:GofR}
m = 0,1,2,\;|G_m(\tau)| \leq \frac{C}{\tau^{1 + a'}},\; \left\|\frac{G_-(y,\tau)}{1 + |y|^3} \right\|_{L^\infty} \leq \frac{C}{\tau^{1 + a'}},\; \|G_e(\tau)\|_{L^\infty} \leq \frac{C}{\tau^a},
\end{equation}
where $a' = a > 1$ in the case \eqref{equ:h} and $a' = a + 1 > 1$ in the case \eqref{equ:h1}.
\end{lemm}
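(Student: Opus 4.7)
The plan is to handle $Q$ and $G$ separately. For $Q$ the work is essentially a routine computation around the classical Merle--Zaag profile $f(z)+\kappa/(2p\tau)$, while for $G$ the novelty is an algebraic rewrite based on the ODE for $\phi$ that exposes a cancellation at leading order in $\eta_a(\tau)$.

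For part $(i)$, differentiating $\vartheta$ in $\tau$ via $z=y/\sqrt{\tau}$ and using the profile ODE $-\tfrac{z}{2}f'(z)-\tfrac{f(z)}{p-1}+f^p(z)=0$ (verified by direct computation of $f'$ from \eqref{def:f}), the leading algebraic terms in $Q$ collapse, leaving
\begin{equation*}
Q(y,\tau)=\frac{1}{\tau}F_1(z)+\frac{1}{\tau^2}F_2(z),
\end{equation*}
with $F_1,F_2$ explicit rational functions bounded on $\mathbb{R}$. A Taylor expansion of $F_1,F_2$ at $z=0$, followed by Hermite projection exactly as in Section 3 of \cite{MZdm97}, yields the component bounds for $Q_m$ and $Q_-$; the outer bound $\|Q_e\|_{L^\infty}\le C/\sqrt{\tau}$ follows from the pointwise estimate $|F_1(z)|\le C(1+z^2)$ on $|y|\ge K\sqrt{\tau}$.

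For part $(ii)$, substitute the ODE $\phi'=-\phi/(p-1)+\phi^p+H$, with $H:=e^{-\frac{p\tau}{p-1}}h(e^{\frac{\tau}{p-1}}\phi)$, into the definition of $G$ in \eqref{def:GofR}. Using $\kappa^{p-1}=\tfrac{1}{p-1}$ and the identity $\phi^{p-1}/\kappa^{p-1}=1/(1+\eta_a(\tau))$ from Lemma \ref{ap:lemmA3}, the result collapses to
\begin{equation*}
G=-\frac{\phi\,\vartheta\,(\vartheta^{p-1}-\kappa^{p-1})\,\eta_a(\tau)}{\kappa(1+\eta_a(\tau))}+\tilde H-\frac{\vartheta}{\kappa}H,\qquad \tilde H:=e^{-\frac{p\tau}{p-1}}h\Big(e^{\frac{\tau}{p-1}}\tfrac{\phi\vartheta}{\kappa}\Big).
\end{equation*}
For bounded $y$ (weighted sense), $\vartheta-\kappa=-\tfrac{\kappa h_2(y)}{4p\tau}+O((1+|y|^4)/\tau^2)$, so $X:=\vartheta(\vartheta^{p-1}-\kappa^{p-1})=O((1+|y|^2)/\tau)$; together with $|H|,|\tilde H|\le C/\tau^a$ from Lemma \ref{ap:lemmA2}, the first term of $G$ and each of the $h$-pieces is of order $(1+|y|^2)/\tau^{a+1}$. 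Hermite projection and weighted $L^\infty$ bounds then close case \eqref{equ:h} with $a'=a$. In case \eqref{equ:h1}, the explicit form $h(x)=\mu|x|^{p-1}x/\log^a(2+x^2)$ together with the asymptotic $\eta_a(\tau)=C_0/\tau^a+O(1/\tau^{a+1})$ from Lemma \ref{ap:lemmA3} yields $\tilde H-\tfrac{\vartheta}{\kappa}H=\eta_a(\tau)X+\mathcal{R}(y,\tau)$ with $|\mathcal{R}(y,\tau)|\le C(1+|y|^4)/\tau^{a+2}$, which exactly cancels the leading $-\eta_a(\tau)X$ in the first piece of $G$ and delivers the sharper rate $a'=a+1$.

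The main obstacle will be orchestrating the cancellation in case \eqref{equ:h1}: three separate contributions of size $1/\tau^a$ (coming from $\eta_a$, $H$, and $\tilde H$) must combine to zero at leading order, and the subleading terms have to be expanded carefully with control of the log-ratio $\log(\vartheta/\kappa)$ and the polynomial-in-$y$ factors arising from $(\vartheta-\kappa)^2$ and from the chain rule through the $\log$ in the denominator of $h$. The outer estimate $\|G_e\|_{L^\infty}\le C/\tau^a$ is by contrast straightforward: on $|y|\ge K\sqrt{\tau}$ one applies Lemma \ref{ap:lemmA2} directly to bound $|H|,|\tilde H|\le C/\tau^a$, and the algebraic piece is dominated by the same order using the uniform bound on $\vartheta^{p-1}-\kappa^{p-1}$.
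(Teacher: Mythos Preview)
Your overall architecture matches the paper's proof: part $(i)$ is the classical Bricmont--Kupiainen computation (the paper simply cites \cite{BKnon94}), and for part $(ii)$ you correctly substitute the ODE \eqref{ap:phiODE} into \eqref{def:GofR} to rewrite $G$ as a product term plus $h$-terms. The paper does the same, splitting $G=\bar G+\tilde G+\hat G$ with $\bar G=\frac{\vartheta\phi}{\kappa^p}(\kappa^{p-1}-\phi^{p-1})(\kappa^{p-1}-\vartheta^{p-1})$ (your first term), $\tilde G=\tilde H-H$, and $\hat G=(1-\vartheta/\kappa)H$, so $\tilde G+\hat G=\tilde H-\tfrac{\vartheta}{\kappa}H$.

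There is, however, a genuine gap in your cancellation argument for case \eqref{equ:h1} when $0<a<1$. You assert that $\tilde H-\tfrac{\vartheta}{\kappa}H=\eta_a(\tau)X+\mathcal R$ with $|\mathcal R|\le C(1+|y|^4)/\tau^{a+2}$, and that this cancels the ``leading $-\eta_a X$'' in the first piece. But neither coefficient is actually $\pm\eta_a$: the first piece equals $-\dfrac{\phi\,\eta_a}{\kappa(1+\eta_a)}X$, while a direct computation with $h(x)=\mu|x|^{p-1}x/\log^a(2+x^2)$ gives $\tilde H-\tfrac{\vartheta}{\kappa}H=\dfrac{\mu\,\phi}{\kappa(1+\eta_a)L_0^a}X+O\bigl((1+|y|^2)/\tau^{a+2}\bigr)$, where $L_0=\log(2+e^{2\tau/(p-1)}\phi^2)$. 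Replacing either coefficient by $\pm\eta_a$ costs $O(\eta_a^2)=O(\tau^{-2a})$; after multiplication by $|X|\le C(1+|y|^2)/\tau$ this leaves a residual of order $(1+|y|^2)/\tau^{1+2a}$, which for $a<1$ is strictly larger than the target $(1+|y|^2)/\tau^{a+2}$ and only delivers $a'=2a$, not $a'=a+1$.

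The correct mechanism, and what the paper exploits, is that the two pieces share the \emph{exact} prefactor $\dfrac{\phi}{\kappa(1+\eta_a)}$, so that
\[
G=\frac{\phi\,X}{\kappa(1+\eta_a)}\Bigl[-\eta_a(\tau)+\frac{\mu}{L_0^a}\Bigr]+O\!\left(\frac{1+|y|^4}{\tau^{a+2}}\right),
\]
and the bracket is $O(\tau^{-(a+1)})$ by the sharp asymptotic $\eta_a(\tau)=C_0\tau^{-a}+O(\tau^{-(a+1)})$ of Lemma \ref{ap:lemmA3} together with $\mu/L_0^a=C_0\tau^{-a}+O(\tau^{-(a+1)})$. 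Keeping the prefactor intact (rather than approximating it by $1$) is precisely what avoids the $\tau^{-2a}$ loss. With this correction your argument is complete and coincides with the paper's.
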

\begin{proof} $i)$ See page 563 in \cite{BKnon94}. For part $ii)$, one can see that it is a direct consequence of the following:
\begin{equation}\label{equ:estonG}
|G(y,\tau)| \leq \frac{C}{\tau^{a}}\quad \text{and}\quad |\chi(\tau) G(y,\tau)| \leq \frac{C}{\tau^{1 + a'}}(1 + |y|^2).
\end{equation}
By the definition of $G_m, G_-$ and $G_e$, part $ii)$ simply follows from \eqref{equ:estonG}. By the linearity, this also concludes the proof of part $ii)$ of Lemma \ref{lemm:estonBRN}.\\

\noindent Let us now give the proof of \eqref{equ:estonG}. For the first estimate, we use the definition \eqref{def:GofR} of $G$, Lemmas \ref{ap:lemmA2} and  \ref{ap:lemmA3},
\begin{align*}
|G(y,\tau)| \leq \left|\frac{\phi' \vartheta}{\kappa} \right| + \left|\frac{\phi \vartheta}{\kappa}\right| \left|1 - \frac{\phi^{p-1}}{\kappa^{p-1}} \right| + \left|e^{-\frac{ps}{p-1}}h\left(e^\frac{s}{p-1}\frac{\phi \vartheta}{\kappa} \right) \right| \leq \frac{C}{s^a}.
\end{align*}
For the second estimate in \eqref{equ:estonG}, we use the fact that $\phi$ satisfies \eqref{equ:phiODE} and write
\begin{align*}
G(y,\tau) &= \frac{\vartheta \phi}{\kappa^p}(\kappa^{p-1} - \phi^{p-1})(\kappa^{p-1} - \vartheta^{p-1})\\
& + e^{-\frac{p\tau}{p-1}}\left[h\left(e^\frac{\tau}{p-1}\frac{\phi \vartheta}{\kappa} \right) - h\left(e^\frac{\tau}{p-1} \phi \right)  \right]\\
& + \left( 1 - \frac{\vartheta}{\kappa}\right)e^{-\frac{p\tau}{p-1}}h\left(e^\frac{\tau}{p-1} \phi \right) := \bar{G} + \tilde{G} + \hat{G}.
\end{align*}
Noting that $\vartheta(y,\tau) = \kappa\left(1 - \frac{h_2(y)}{4p\tau} + \mathcal{O}\left( \frac{|y|^4}{\tau^2}\right) \right)$ uniformly for $y \in \mathbb{R}$ and $\tau \geq 1$, and recalling from Lemma \ref{ap:lemmA3} that $\phi(\tau) = \kappa(1 + \eta_a(\tau))^{-\frac{1}{p-1}}$ where $\eta_a(\tau) = \mathcal{O}(\tau^{-a})$, then using a Taylor expansion, we derive
\begin{align*}
 \bar{G}(y,\tau) &= \frac{\phi\eta_a(\tau)}{1 + \eta_a(\tau)} \left(\frac{h_2(y)}{4p\tau} + \mathcal{O}\left( \frac{|y|^4}{\tau^2}\right)\right) ,\\
\tilde{G}(y,\tau) &=  -\phi e^{-\tau}h'\left(e^\frac{\tau}{p-1} \phi\right)\left(\frac{h_2(y)}{4p\tau} + \mathcal{O}\left(\frac{|y|^4}{\tau^2}\right)\right),\\
\hat{G}(y,\tau) &= e^{-\frac{p\tau}{p-1}}h\left(e^\frac{\tau}{p-1}\phi\right) \left(\frac{h_2(y)}{4p\tau} + \mathcal{O}\left(\frac{|y|^4}{\tau^2}\right)\right).
\end{align*}
This yields the second estimate in \eqref{equ:estonG} in the case \eqref{equ:h}. If $h$ is given by \eqref{equ:h1}, we have furthermore
\begin{align*}
&\left|\frac{\phi\eta_a(\tau)}{1 + \eta_a(\tau)} - e^{-s}h'(e^\frac{s}{p-1}\phi)\phi  + e^{-\frac{ps}{p-1}}h(e^\frac{s}{p-1}\phi)\right| \\
&\leq \left|\frac{\phi}{1 + \eta_a(\tau)}\right|\left|\eta_a(\tau)- \frac{\mu}{\log^a\left(2 + e^\frac{2\tau}{p-1}\phi^2(\tau)\right)}\right| + \frac{C}{\tau^{a+1}} \leq \frac{2C}{\tau^{1 + a}},
\end{align*}
which yields the second estimate in \eqref{equ:estonG} in the case \eqref{equ:h1}. This concludes the proof of \eqref{equ:estonG} and the proof of part $ii)$ of Lemma \ref{lemm:estonBRN} also.
\end{proof}

\noindent $iii)$ From the definition \eqref{def:N} of $N$, we use a Taylor expansion for $N$ to find that in the case \eqref{equ:h},
$$N(q(\tau),\tau) = e^{-\tau}h'\left(e^\frac{\tau}{p-1}(\phi(\tau) + \theta_1 q(\tau))\right)q(\tau)\quad \text{with} \quad \theta_1 \in [0,1],$$
and in the case \eqref{equ:h1},
$$N(q(\tau),\tau) = e^{-\frac{(p-2)\tau}{p-1}}h''\left(e^\frac{\tau}{p-1}(\phi(\tau) + \theta_2 q(\tau))\right)q^2(\tau)\quad \text{with} \quad\theta_2 \in [0,1].$$
Since $\varphi(\tau) \to \kappa$ and $\|q(\tau)\|_{L^\infty(\mathbb{R})} \to 0$ as $\tau \to +\infty$, this implies that there exists $\tau_0$ large enough such that $\frac{\kappa}{2} \leq |\phi(\tau) + \theta_i q(\tau)| \leq \frac{3\kappa}{2}$ for all $\tau \geq \tau_0$ and $y \in \mathbb{R}$. Then by Lemma \ref{ap:lemmA2}, we have $|N(q(\tau),\tau)| \leq \frac{C |q|^\beta}{\tau^a}$ where $\beta = 1$ in the case \eqref{equ:h} and $\beta = 2$ in the case \eqref{equ:h1}, which implies part $iii)$ of Lemma \ref{lemm:estonBRN}. This concludes the proof of Lemma \ref{lemm:estonBRN}.
\end{proof}

\def\cprime{$'$}

\vspace*{0.4cm}
\noindent $\begin{array}{ll} \textbf{Address:} & \text{Universit\'e Paris 13, Institut Galil\'ee, LAGA,}\\
& \text{99 Avenue Jean-Baptiste Cl\'ement,}\\
& \text{93430 Villetaneuse, France.}\\
\end{array}$

\vspace*{0.2cm}
\noindent $\begin{array}{ll}\textbf{E-mail:}& \text{vtnguyen@math.univ-paris13.fr}\\
& \text{Hatem.Zaag@univ-paris13.fr}
\end{array}$


\end{document}